\newcommand{\R}{\mathbb{R}}
\newcommand{\N}{\mathbb{N}}
\newcommand{\E}{\mathbb{E}}
\newcommand{\obj}{{\cal E}}
\newcommand{\Nc}{\mathcal{N}}
\newcommand{\opt}{{\sf OPT}}
\renewcommand{\P}{\mathbb{P}}
\newcommand{\eps}{\epsilon}
\newcommand{\sma}{{s}}
\newcommand{\lar}{{l}}
\newcommand{\cont}[1]{C_{#1}}
\newcommand{\Var}{\mathrm{Var}}
\newcommand{\Cov}{\mathrm{Cov}}
\newcommand{\nota}[1]{\hat{#1}}
\renewcommand{\overline}[1]{\left\llbracket#1\right\rrbracket}
\newcommand{\black}[1]{{{\color{black}{#1}}}}
\newtheoremstyle{DStheorem}
  {\topsep}
  {\topsep}
  {\itshape}
  {0pt}
  {\scshape}
  {.}
  { }
  {\thmname{#1}\thmnumber{ #2}\thmnote{ (#3)}}
\theoremstyle{DStheorem}
\newtheorem{theorem}{Theorem}[section]
\newtheorem{lemma}[theorem]{Lemma}
\newtheorem{claim}[theorem]{Claim}
\newtheorem{definition}[theorem]{Definition}
\let\oldproofname=\proofname
\renewcommand{\proofname}{\rm\sc{\oldproofname}}
\newcommand{\changelocaltocdepth}[1]{%
  \addtocontents{toc}{\protect\setcounter{tocdepth}{#1}}%
  \setcounter{tocdepth}{#1}%
}
\newcounter{marouanecounter}
\newcounter{omarcounter}
\begin{document}

\begin{titlepage}

\title{Assortment Optimization with Visibility Constraints%
\thanks{An extended abstract of this paper appeared in proceedings of the 25th International Conference on Integer Programming and Combinatorial Optimization (IPCO 2024).}}

\author{%
\quad\quad Th\'eo Barr\'e\thanks{Department of Industrial Engineering and Operations Research,
University of California
Berkeley, CA, USA. Email: {\tt theo\_barre@berkeley.edu}.}  \and
Omar El Housni\thanks{School of Operations Research and Information Engineering, Cornell Tech, Cornell University, NY, USA. Email: {\tt \{oe46,mi262,al748\}@cornell.edu}.}
\and
Marouane Ibn Brahim\footnotemark[3] \quad\quad\quad
\and
Andrea Lodi\footnotemark[3]
\and
Danny Segev\thanks{School of Mathematical Sciences and Coller School of Management, Tel Aviv University, Tel Aviv 69978, Israel. Email: {\tt segevdanny@tauex.tau.ac.il}.}  }

\date{}
\maketitle

\setcounter{page}{200}
\thispagestyle{empty}

\begin{abstract}
Motivated by  applications in e-retail and online advertising, we study  the problem of assortment optimization under visibility constraints \eqref{APV}. Here, we are given a universe of substitutable products and a stream of customers. The objective is to determine the optimal assortment of products to offer to each customer in order to maximize the total expected revenue, subject to exogenously-given {\em visibility constraints}, stating that each product should be shown to a minimum number of customers. We assume that customer choices follow a Multinomial Logit model (MNL). 

We provide a structural characterization of optimal assortments  and present \black{a linear} time algorithm for solving \ref{APV}. To this end, we introduce a novel function called the ``expanded revenue" of an assortment and establish its supermodularity; our algorithm takes advantage of this structural property. Additionally, we prove that \ref{APV}  can be formulated as a compact linear program. Next, we consider \ref{APV} with cardinality constraints, \black{which limit the maximum number of products that can be included in an assortment}. We prove \black{this problem} to be strongly NP-hard and not admitting a Fully Polynomial Time Approximation Scheme (FPTAS), even when all products have identical prices. Subsequently, we devise a Polynomial Time Approximation Scheme (PTAS) for \ref{APV} under cardinality constraints with identical prices. 
We also examine the revenue loss resulting from the enforcement of visibility constraints, comparing it to the unconstrained problem. To offset this loss, we propose a novel strategy to distribute the loss incurred among the products subject to visibility constraints, charging each vendor an amount proportional to their product's contribution to the revenue loss.
\end{abstract}

\bigskip \noindent {\small {\bf Keywords}: Assortment Optimization, Multinomial Logit model, Visibility Constraints,  Supermodularity.}

\end{titlepage}


\pagestyle{plain}
\setcounter{page}{1}

\section{Introduction}\label{sec:intro}
Assortment optimization is a crucial aspect of  decision making in many industries  such as  e-retail and online advertising. In this domain, our goal is to select a subset of available products  to offer to customers in order to maximize a context-appropriate objective function, 
such as revenue, profit, or market share. For example, e-retailers seek to select which products should be displayed to customers in order to maximize their expected revenue. Online  advertisers strategically select the most effective combination of advertisements to maximize user engagement and desired outcomes, such as click-through rates. 
Consequently, the choice of a well formed assortment is crucial due to inherent substitution effects, where a product's attractiveness depends not only on its intrinsic value but also on the concurrent alternatives presented at that time. For instance, offering a high-quality, high-priced product alongside a comparable product at a significantly lower price may result in diminishing sales for the higher-priced product, leading in turn to an unsatisfactory platform revenue. For a comprehensive overview of assortment optimization and its applications, we refer avid readers to 
related surveys and books \citep{kok2015assortment, Gallego2019RevenueMA}.

Traditionally, assortment optimization frameworks often overlook a crucial element in contemporary e-commerce, which is the intended exposure of specific products to customers, which we refer to as product {\em visibility}. In today's complex business landscape, where companies adhere to Service-Level Agreements (SLAs) with suppliers and prioritize sponsored promotions, product visibility within our assortment of choice is pivotal. 
SLAs  often define conditions for product representation, ensuring equitable visibility for each supplier's products on the platform. Moreover, the concept of sponsored products has gained traction in recent years, with brands willing to pay for prominent display and increased visibility. 
While these strategies influence consumer behavior in various ways,  solely focusing on products visibility without considering the broader context of assortment optimization can lead to an imbalanced product mix, resulting in reduced customer satisfaction and overall revenue.

In this paper, we introduce the notion of {\em visibility constraints} into the classical framework of assortment optimization. In essence, our purpose is to enforce a minimum level of display, \black{asking each product to be shown} at least a certain number of times in the displayed assortments. Such constraints model both Service-Level Agreements and sponsored products. In addition, they can capture settings where the platform wishes to instill some fairness notion among vendors by ensuring that each product is given a ``fair'' chance, in the sense of being shown to at least a certain number of customers. Deferring the formal model formulation to Section \ref{sec:form}, we now provide a brief informal introduction to our model. Specifically, we are given a universe of substitutable products along with a stream of $T$ customers. For each  customer, we determine an individual assortment from the universe of products. Subsequently, the customer either decides to purchase one of these products, or chooses to leave without purchasing any product. We assume that these decisions are governed by the well-known Multinomial Logit (MNL) choice model. That is, what differentiates our work from classical models is enforcing the constraint that each  product has to be shown a minimum number of times among the $T$ offered assortments noting that this minimum display requirement is given exogenously.   Our objective is to maximize the combined expected revenue collected across the stream of customers. We refer to this newly-introduced problem as {\em assortment optimization with visibility} constraints, concisely denoted as \ref{APV}.

\black{Alternatively to our model, one could also consider a randomized approach, where the goal is to determine a distribution over assortments rather than an exact collection, as explored in the concurrent work of Lu et al. (2024). While both models have merit, focusing on a stream of customers rather than a distribution over assortments offers key advantages. First, many advertising platforms, such as Google and Facebook , employ cost-per-mille (CPM) pricing, where advertisers pay for a guaranteed level of visibility based on the number of product displays. Explicitly modeling a customer stream ensures that these visibility commitments are met, whereas a distribution-based approach does not inherently guarantee the required displays. As a result, a correction phase may be necessary at the end of the time horizon to satisfy visibility constraints. Second, in our setting, computing a deterministic solution is a more natural choice. Introducing randomness is often justified when a deterministic approach is either intractable or leads to unfavorable properties. However, our model turns out to exhibit favorable structural and algorithmic properties, as presented throughout the paper.}

From a computational standpoint, \black{it is only natural to wonder about} the complexity of including visibility constraints. In fact, without this requirement, the problem in question reduces to the classic MNL-driven assortment optimization problem, for which we know that the optimal offer set is price-ordered (\cite{talluri2004revenue}), and can therefore be computed in polynomial time. However, by enforcing visibility constraints, we necessarily have to include certain products along the sequence of displayed assortments, possibly cannibalizing the sales of more profitable products due to substitution effects.
Consequently, determining an optimal sequence of assortments  appears to be a challenging question. 
Subsequently, a practically motivated setting concerns the introduction of cardinality constraints on the offered assortments\black{, which impose an upper bound on the number of products included in the assortments}. Such constraints have extensively been studied in earlier literature \citep{Sumida2020RevenueUtilityTI, gallego2014constrained, desir2020constrained, kunnumkal2023new, leitner2024exact}, as they allow us to capture multiple applications where vendors have limited shelf space, screen size, or purchasing budget.
Finally, a fundamental question is that of quantifying the revenue loss incurred by enforcing visibility constraints compared to their relaxed unconstrained counterpart. 
This challenge compels us to develop a pricing strategy that appropriately apportions the loss to different vendors based on the impact of their product on the overall revenue.
Such a scenario frequently occurs within the framework of SLAs, where typically, vendor-platform contracts include a clause that guarantees a certain level of visibility to the vendor's product. In return for this visibility, the vendor compensates the platform with an appropriate fee. \black{In general, advertising companies such as Google, Facebook or Amazon employ various pricing schemes. For instance, two of the most common are {\em cost-per-mille} (CPM) and {\em cost-per-clic} (CPC); see e.g., \cite{asdemir2012pricing}. In a nutshell, the former pricing model charges a fee for a fixed number of ad impressions, typically per thousand, while the latter model charges a fee per interaction with the ad, usually a click. Our work aligns with the CPM framework, as we enforce visibility constraints that require a minimum number of displays for each product.
}

\subsection{Main Contributions and Technical Ideas}\label{subsec:contributions}
    
In this paper, we introduce and study  assortment  optimization with visibility constraints \eqref{APV} under the Multinomial Logit choice model. Our first contribution resides in settling the complexity status of this problem, by developing \black{an exact} polynomial time algorithm. Next, we consider a natural extension, imposing a cardinality constraint on the offered assortment\black{, which limits its allowable number of products}. We show that this additional requirement makes the problem fundamentally harder, rendering it strongly NP-hard, even when all products have identical prices. This finding excludes the possibility of devising a fully polynomial time approximation Scheme (FPTAS), unless $\mathrm{P}=\mathrm{NP}$. On the positive side, we design a \black{randomized} polynomial time approximation scheme (PTAS) for the setting of equal prices.
Our subsequent goal is to quantify the revenue loss brought by visibility constraints, compared to the unconstrained assortment optimization problem, and to design a  strategy for  sharing this loss among the different vendors for which visibility constraints have been enforced.  Our main contributions can be briefly summarized as follows.

\begin{enumerate}
    \item {\bf Exact polynomial time algorithm}. In Section \ref{sec:apv}, our main technical contribution is to design an exact \black{linear} time algorithm for \ref{APV}. To this end, we introduce the notion of expanded revenue and expanded set of a given assortment, which will play a pivotal role in designing our algorithm. We leverage structural properties of the expanded revenue function to characterize the structure of optimal solutions, enabling us to develop an efficient algorithmic approach.
    
    \begin{enumerate}
        \item {\bf Expanded Revenue}. \black{We define the so-called expanded revenue function as follows:} Given a universe of products $\cal N$ and an assortment $A \subseteq {\cal N}$, the expanded revenue of $A$ is defined as the maximum revenue of any assortment that contains $A$. In turn, the expanded set of $A$ is the assortment that achieves the latter revenue. We show that the expanded revenue function is closely related to the objective function of \ref{APV} for a single customer, and provide a linear time algorithm to compute the expanded revenue.         

        \item {\bf Monotonicity and supermodularity}. We prove that the expanded revenue function possesses several useful properties. Namely, we establish a monotonicity property, showing that the expanded revenue of an assortment decreases with the addition of new products. Then, we prove the main theoretical result on which our final algorithm relies: supermodularity\footnote{
        \black{A function $f\colon \Omega\rightarrow \R$ is supermodular if $f(B\cup\{i\})-f(B) \geq f(A\cup\{i\}) - f(A)$ for all $A\subseteq B\subseteq \Omega$ and $i\in \Omega\setminus B$, .}} of the expanded revenue function.
        
        \item {\bf Combinatorial algorithm and LP formulation}. Building on the above-mentioned properties, we finally identify a simple nested structure for  optimal \ref{APV} solutions, and devise a polynomial time algorithm to efficiently compute such assortments. Additionally, we demonstrate that \ref{APV} can be formulated as a compact  linear program, thereby proposing an alternative algorithmic approach.
    \end{enumerate}

\item {\bf \ref{APV} with cardinality constraints.} In section \ref{sec:APVC}, we consider the practically-motivated extension of \ref{APV}, where there is an upper bound on the number of products that can be displayed in each assortment.
\begin{enumerate}
    \item {\bf Hardness.} First, we prove that \ref{APV}  with cardinality constraints is strongly NP-hard, even with equal prices, by linking its resolution to the $3$-\texttt{PARTITION} problem. Moreover, we extend our proof to rule out the existence of an FPTAS, unless $\mathrm{P}=\mathrm{NP}$.
    
    \item {\bf Polynomial-time approximation scheme (PTAS).} Our cornerstone algorithmic result for the equal price setting is the design of a \black{randomized} polynomial-time approximation scheme (PTAS), allowing us to approach optimal revenues within any degree of precision. \black{Specifically, for any $\eps>0$, our approximation scheme outputs a random assortment whose performance guarantee is, in expectation, within $1-\eps$ of optimal}. Our PTAS relies on a linearization of the objective function through the efficient guessing of a carefully chosen set of parameters. Then, we formulate a linear relaxation, and leverage its fractional solution to compute a random approximate (integer) solution, using a dependent rounding scheme. The latter solution is subsequently shown to be, in expectation, near optimal for \ref{APV} with cardinality constraints. \black{Some of these ideas are inspired by the work of  \cite{segev2021efficient} on rounding fractional} solutions for the Santa Claus problem.
\end{enumerate}

\item {\bf Price of visibility.} Clearly, the introduction of visibility constraints may possibly reduce the \black{optimal} expected revenue compared to the unconstrained assortment optimization problem. In Section \ref{sec:price}, we aim to quantify this revenue loss and to propose a fair strategy for distributing it among vendors based on their respective contributions to the overall loss. We devise a pricing strategy that, for each product which negatively impacts the overall revenue, charges the vendor a fraction of the loss \black{that is} proportional to the ratio between the negative contribution of the product and the sum of the negative contributions of all products. We demonstrate that this strategy satisfies natural fairness properties and exhibits favorable computational tractability.
\end{enumerate}

\subsection{Related Literature}

Assortment optimization under the Multinomial Logit (MNL) model has been a well-established research direction in revenue management. Initially introduced by \cite{luce1959individual}, with subsequent works by \cite{mcfadden1973conditional} and \cite{RePEc:ecm:emetrp:v:52:y:1984:i:5:p:1219-40}, the MNL  model has gained a great deal of popularity for modeling customer choices due to its structural simplicity, predictive power, ease of estimation and computational tractability compared to more complex alternatives. It has been extensively utilized in various research works  such as those of \cite{mahajan2001stocking}, \cite{talluri2004revenue}, \cite{ Rusmevichientong2010DynamicAO}, \cite{Sumida2020RevenueUtilityTI}, \cite{gao2021assortment}, \cite{housni2023maximum} and \cite{el2023joint}, to mention a few. As demonstrated by \cite{talluri2004revenue}, under the MNL model, the optimal unconstrained assortment is revenue-ordered, namely, it contains all products whose prices exceed a certain threshold, simplifying the optimization problem by avoiding the consideration of exponentially-many subsets. Moreover, \cite{Gallego2011AGA} proposed a linear programming formulation for this problem. \cite{Rusmevichientong2010DynamicAO} seem to have been the first to consider cardinality constraints, proving this setting to be solvable in polynomial time. \cite{desir2022capacitated} studied more general capacity constraints, showing they are NP-hard to address in the general case. \cite{Sumida2020RevenueUtilityTI} studied totally unimodular constraint structures in this context, and showed that the resulting problem can be reformulated as a linear program. 
However, when considering mixtures of MNL models (MMNL), the assortment optimization problem becomes NP-hard even with two classes of customers \citep{rusmevichientong2014assortment}, and $O(n^{1-\eps})$-hard to approximate in general \citep{desir2022capacitated}.

To the best of our knowledge, our paper is the first to study assortment optimization under visibility constraints.  The topic of visibility in assortment planning has been largely overlooked thus far: \cite{chen2022fair} studied visibility under a fairness approach, trying to enforce similar visibility for products with similar characteristics, while \cite{wang2023advertising} studied a version in which they can increase the attractiveness of some products through an advertising budget. 
Very recently, in a concurrent work, \cite{lu2023simple} considered an assortment optimization problem under the MNL model, subject to  fairness constraints similar to the visibility constraints presented in our paper. However, in contrast to our model, where individual assortments are offered to a predetermined number of customers, their study investigates a {\em single-customer} version, where random assortments can be offered. The latter version appears to be not as complex. For instance, as mentioned in Section \ref{subsec:contributions}, we establish that our model with cardinality constraints is strongly NP-hard and does not admit an FPTAS, whereas their model admits an FPTAS under cardinality constraints. Hence, different optimization techniques and approaches appear to be needed.

Interestingly, the topic of assortment optimization for a stream of customers has typically been studied from an online perspective, where decisions are made sequentially; see, e.g., \cite{golrezaei2014real}
and \cite{aouad2023online}. \black{In contrast, we study a non-adaptive setting, where the entirety of our assortments is planned in advance, that is, the collection of assortments in decided offline}. Other existing versions, such as those studied by  \cite{Li2009ASA}, consider a flow of customers with randomized preferences, to which we offer a common assortment. Finally, in revenue management, pricing problems are often considered in the sense of optimizing the selling price of each product \citep{Wang2012CapacitatedAA,Miao2018DynamicJA, Alptekinolu2015TheEC}, while we focus on the scenario where selling prices are fixed,  and instead, we study the question of pricing the revenue loss generated by enforcing visibility of each product.

\section{Model Formulation}\label{sec:form}

{\bf The MNL choice model.} Let  $\mathcal{N} \coloneqq \{1,\ldots, n\}$ be a universe of substitutable products at our disposal, where each product $i \in {\cal N}$ has a  price $p_i \geq 0$.  Without loss of generality, we index these products by non-increasing prices, i.e., $p_1 \geq \cdots \geq p_n$.  
An assortment of products (or an offer set), is simply a subset of products $S \subseteq {\cal N}$. Additionally, the option of not selecting any product is symbolically represented as product $0$, referred to as the no-purchase option.

We assume that customers make their purchasing decisions according to a Multinomial Logit model. Under this model, each product $i \in {\cal N}$ is associated with a so-called preference weight $v_i >0$. This parameter captures the attractiveness of product $i$, meaning that a high  preference weight indicates a high popularity. Without loss of generality, we assume by convention that the no-purchase preference weight is normalized to $v_0=1$, and we define the total weight of an assortment $S\subseteq \Nc$ as $V(S) \coloneqq \sum_{i \in S} v_i$.
Under the MNL model, if we offer  an assortment $S \subseteq {\cal N}$ to a given customer, she chooses each product $i\in S$ with  probability 
 $$\phi(i, S) \coloneqq \frac{v_i}{1 + V(S)},$$
 \black{with convention that $\phi(i, S) = 0$ for all $i\in \Nc\setminus S$.}
 We refer to $\phi(i,S)$ as the choice probability of product $i$ given the assortment $S$. 
 Alternatively, this customer may decide to not purchase any product, which happens with the complementary probability 
  $$\phi(0, S) \coloneqq \frac{1}{1 + V(S)}.$$
Letting $R(S)$ be the expected  revenue obtained by offering the assortment $S$, we have 

$$R(S) \coloneqq   \sum_{i \in S} p_i \phi(i,S) = \frac{\sum_{i \in S} p_i v_i}{1 + \sum_{i \in S} v_i}.$$



\noindent
{\bf Assortment optimization with visibility constraints.} 
Given a finite stream of $T$ customers, our solution concept consists of offering an assortment $S_t$ to each customer $t\in [T]$. As previously explained, customers make their purchasing choices according to the same MNL model, meaning that each customer independently decides to purchase product $i\in S_t$ with probability $\phi(i, S_t)$, or chooses the no-purchase option, with  probability $\phi(0, S_t)$. As such, the expected revenue we obtain from customer $t$ is $R(S_t)$.
To ensure visibility, we impose constraints that require each product $i \in \mathcal{N}$ to be shown to at least $\ell_i$ customers, assuming that the parameters $\{\ell_i\}_{i\in \Nc}$ are exogenous, with $\ell_i \leq T$.
Our objective is to determine, \black{in an offline fashion}, a collection of assortments $S_1, \ldots, S_T$ in order to maximize the total expected revenue while satisfying these visibility constraints. We refer to this problem as the {\em assortment optimization problem with visibility constraints}  (\ref{APV}), succinctly formulated as follows:

\begin{equation}
\label{APV}
\begin{aligned}
 \max_{ S_1, \ldots, S_T \subseteq \mathcal{N}}  & \; \;      \sum_{t=1}^T  R(S_t)   \\  
  \mathrm{s.t.} \;\;   & \;\;  \sum_{t=1}^T  \mathbbm{1}(i \in S_t) \geq \ell_i, \;\;\; \forall i \in \mathcal{N}
\end{aligned}
\tag{\sf{APV}}
\end{equation}

    \section{Polynomial Time Algorithm for \ref{APV}} \label{sec:apv}

The primary contribution of this section consists of developing a polynomial time algorithm for \ref{APV}. To this end, in Section \ref{subsection:expanded}, we introduce the ``expanded revenue" of an assortment  and its ``expanded set". 
In Section \ref{subsection:prop}, we present a polynomial time procedure for computing these two objects and establish the monotonicity and supermodularity of the expanded revenue function. Leveraging these properties, in Section \ref{subsection:algo}, we characterize the structure of optimal solutions to \ref{APV}, showing that they can be computed in $O(n + T)$ time. Finally, in Section \ref{subsectin:lp}, we prove that \ref{APV} can be formulated as a compact linear program.





\subsection{Expanded Revenue and Expanded Set} \label{subsection:expanded}

We begin our analysis by examining \ref{APV} in the context of a single customer. In this simple scenario, the underlying visibility constraints are given such that each product $i\in \Nc$ either must be displayed (i.e., $\ell_i = 1$) or can be displayed (i.e., $\ell_i=0$). Let $A=  \{i\in \Nc\,\colon\,\ell_i = 1\}$ denote the subset of all products that must be displayed. Consequently, in this simple setting, \ref{APV} is the problem of identifying the assortment that maximizes revenue while including $A$. This particular problem will serve as  the building block for our analysis, as it lays the foundation for understanding the general case involving  $T$ customers. Thus, it leads us to introduce the next two definitions.



\begin{definition}[\bf Expanded revenue]
    \label{expanded revenue}
    Let $A \subseteq \mathcal{N}$. The expanded revenue $\overline{R}(A)$ of $A$ is defined as the maximum expected revenue achieved by any assortment that contains $A$. Namely,

    \begin{equation} \label{eq:exrev}
        \overline{R}(A) \coloneqq \max_{S \subseteq \mathcal{N} : A \subseteq S} R(S).
    \end{equation}
\end{definition}

An optimal solution to the maximization problem in \eqref{eq:exrev} is referred to as the expanded set of $A$. When multiple optimal solutions exist, we break ties by selecting one with the largest cardinality, which is shown to be unique in Lemma \ref{Compute expanded set}, hence proving that the expanded set is well defined. Formally, we have the following definition. 

\begin{definition}[\bf Expanded set]
    \label{expanded set}
The expanded set $\overline{A}$ of $A$ is defined as the assortment within $\mathcal{N}$ that  maximizes the expected revenue  among all assortments containing $A$.  If multiple assortments attain this optimum, $\overline{A}$ is selected as the one of largest cardinality.  In other words,
    $$\overline{A} \coloneqq \underset{S \subseteq \mathcal{N} : A \subseteq S}{\arg \max}   \left\{ |S| \; : \; R(S)=  \overline R (A)   \right\}.   $$
\end{definition}




Problem \eqref{eq:exrev} can be viewed as equivalent to \ref{APV} when considering a single customer scenario $(T=1)$ and defining $A$ as the set of products that must be shown. Thus, $\overline A$ represents the optimal assortment, specifically one with the largest cardinality.

In our  analysis, we consider $\overline{R}$ as a set function that takes an assortment $A \subseteq \mathcal{N}$ as input and returns $\overline{R}(A)$, noting that $\overline{R}(A) = R(\overline{A})$.


\subsection{Properties of the Expanded Revenue Function} \label{subsection:prop}

In this section, we first show that the expanded revenue and the expanded set of a given assortment can both be computed in polynomial time (Lemma \ref{Compute expanded set}). Then, we prove that the expanded revenue is a non-increasing function and that the expanded set is a non-decreasing function (Lemma \ref{monotonicity}). 
Finally, we argue that the expanded revenue function is supermodular (Lemma \ref{supermodularity}), which is a particularly important property for our algorithm design later on. 

\vspace{2mm}
\noindent
{\bf Computing the expanded revenue and expanded set.}
Recall that, without loss of generality $p_1 \geq \cdots \geq p_n$. We define an assortment $S$ to be price-ordered if $S=\black{[k]}$ for some $0 \leq k \leq n$, essentially prioritizing products with high prices; clearly, there  are only $n+1$ such assortments possible.
Consider an arbitrary assortment $A \subseteq {\cal N}$ and its expanded set $\overline A$. In Lemma \ref{Compute expanded set} below, we prove that $\overline A$ is the union of $A$ and a price-ordered assortment. Since there are only $n+1$ possible price-ordered assortments, it suffices to compute the expected revenue of the assortments $A \cup \black{[k]}$, for each $k \in \black{[n]}$. The expanded set corresponds to the assortment with the highest expected revenue. As previously explained, in the case of multiple assortments with the same maximum revenue, we break ties by selecting the one with the largest cardinality. Thus, the expanded set $\overline A$ can be computed  $O(n)$ time, and the expanded revenue is simply $\overline{R}(A)= R(\overline A)$. The proof of Lemma \ref{Compute expanded set} leverages several structural properties of the MNL-based revenue function that are presented in Appendix \ref{apx1}.

\begin{lemma}
    \label{Compute expanded set}
    For any assortment $A \subseteq \mathcal{N}$, its expanded set is given by  
    $ \overline{A} = A \cup \{i \in \mathcal{N} : p_i \geq \overline{R}(A) \}.$ Furthermore, $\overline{R}(A)$ and $\overline{A}$ can be computed in time $O(n)$.
\end{lemma}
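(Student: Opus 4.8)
The plan is to first establish the structural identity $\overline{A} = A \cup \{i \in \mathcal{N} : p_i \geq \overline{R}(A)\}$ and then exploit the price-ordering to extract the $O(n)$ running time. The engine driving the whole argument is the elementary marginal-revenue behaviour of the MNL model (the structural facts collected in Appendix~\ref{apx1}): for any $S \subseteq \mathcal{N}$ and $i \notin S$, a direct computation gives $R(S \cup \{i\}) - R(S) = \frac{v_i\,(p_i - R(S))}{1 + V(S) + v_i}$, so that appending $i$ strictly increases, preserves, or strictly decreases the revenue exactly according to whether $p_i$ is larger than, equal to, or smaller than $R(S)$. A companion computation shows that $p_i - R(S\cup\{i\})$ carries the same sign as $p_i - R(S)$, which lets me translate freely between the sign of $p_i$ relative to $R(S)$ and its sign relative to $R(S\cup\{i\})$. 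I would invoke these sign properties throughout.

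Writing $r \coloneqq \overline{R}(A) = R(\overline{A})$, I would prove the identity by two inclusions. For $A \cup \{i : p_i \geq r\} \subseteq \overline{A}$: we have $A \subseteq \overline{A}$ by feasibility; if some $i$ with $p_i > r$ were absent from $\overline{A}$, then $R(\overline{A}\cup\{i\}) > r$, contradicting optimality, while if some $i$ with $p_i = r$ were absent, then $R(\overline{A}\cup\{i\}) = r$ with strictly larger cardinality, contradicting the largest-cardinality tie-break. For the reverse inclusion $\overline{A} \subseteq A \cup \{i : p_i \geq r\}$: suppose $i \in \overline{A}\setminus A$ with $p_i < r$; applying the sign identity to $S = \overline{A}\setminus\{i\}$ (so that $R(S\cup\{i\}) = r$ and $p_i < R(S\cup\{i\})$) yields $R(S) > r$, and since $S$ still contains $A$ this contradicts $r = \overline{R}(A)$. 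Combining the inclusions gives the claimed identity, which in particular determines $\overline{A}$ uniquely and thereby confirms that the expanded set is well defined.

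For the computational claim, I would use the convention $p_1 \geq \cdots \geq p_n$: the set $\{i : p_i \geq r\}$ is then a prefix $[k^*]$, so the characterization reads $\overline{A} = A \cup [k^*]$, meaning the maximizer lies among the $n+1$ price-ordered candidates $A \cup [k]$ for $0 \leq k \leq n$. It therefore suffices to evaluate $R(A \cup [k])$ for every $k$ and return the best value, breaking ties toward the larger $k$ (equivalently, larger cardinality, since $|A\cup[k]|$ is non-decreasing in $k$). Maintaining the numerator $\sum_{j \in A\cup[k]} p_j v_j$ and denominator $1 + \sum_{j\in A\cup[k]} v_j$ of $R(A\cup[k])$ and updating them incrementally as $k$ increases—adding $p_k v_k$ and $v_k$ only when $k \notin A$—makes each step $O(1)$, for an $O(n)$ total that simultaneously yields $\overline{R}(A)$ and $\overline{A}$.

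I expect the main obstacle to be the tie-breaking half of the characterization: carefully arguing that products priced exactly at $r$ must be included while those priced below $r$ and not forced by $A$ must be excluded, and ensuring the largest-cardinality selection is genuinely well defined rather than circular. The underlying sign computations are routine, but they must be applied in the correct direction—comparing $p_i$ against $R(S)$ versus against $R(S\cup\{i\})$—to avoid sign errors when reasoning about the removal of a product from $\overline{A}$.
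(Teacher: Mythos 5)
Your proposal is correct and follows essentially the same route as the paper: both arguments rest on the marginal-revenue sign lemma (the paper's Lemma \ref{Revenue variations}, which you re-derive via the convex-combination/sign computation), prove the set identity by the same two contradiction arguments (strict improvement violating optimality, and zero-change violating the largest-cardinality tie-break), and obtain $O(n)$ by incrementally evaluating the $n+1$ price-ordered candidates $A \cup [k]$ with stored numerator and denominator. The only cosmetic difference is that you phrase the exclusion of low-priced products via removal from $\overline{A}$ using $S = \overline{A}\setminus\{i\}$, while the paper phrases it as removing $i$ from $B = \overline{A}\setminus A$; these are the same step.
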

\begin{proof}
    By definition of the expanded set, we have $A \subseteq \overline{A}$. Hence, there exists an assortment $B \subseteq \mathcal{N} \setminus A$, such that $\overline{A} = A \uplus B$, \black{where $\uplus$ denotes the disjoint union of sets}. In what follows, we prove that
    $B = \{i \in \mathcal{N} \setminus A   \; : \; p_i \geq \overline{R}(A) \}.$
    \begin{itemize}
        \item {\em Direct inclusion}: Let $i\in B$, and assume by contradiction that $p_i < \overline{R}(A) = R(A \cup B)$. It is easy to verify that, under the MNL model, when we add a product $j\notin S$ to an assortment $S$, the revenue of this assortment weakly increases if and only of $p_j \geq R(S)$. For completeness, we provide the formal statement and the proof of this result in Lemma \ref{Revenue variations} (see Appendix \ref{apx1}). By this lemma, removing product $i$ from $B$ would strictly increase the expected revenue $R(A)$, which contradicts the optimality of $A \cup B$.
        \item {\em Indirect inclusion: } Let $i \in \mathcal{N} \setminus A$ be a product with $p_i \geq \overline{R}(A)$, and suppose by contradiction that $i \notin B$. By Lemma \ref{Revenue variations}, adding $i$ to $B$ would weakly increase the expected revenue of $A\cup B$. If this increase is strict, it contradicts the optimality of $A \cup B$. If the expected revenue remains unchanged, it contradicts the definition of $\overline{A} = A \cup B$ as the optimal assortment of maximum cardinality. 
        \end{itemize}
    Thus,
        $\overline{A} = A \cup \{i \in \mathcal{N} \setminus A\,\colon\, p_i \geq \overline{R}(A) \}.$
        
    Finally, $\overline{A}$ can be computed in $O(n)$ time. Indeed, starting from $A$, we sequentially add products by decreasing price. At each iteration, we can compute the new expected revenue given the previous one in constant time by storing the current numerator and denominator, since we only need to add $p_i v_i$ to the former and $v_i$ to the latter. Finally, we pick the highest revenue assortment among the $n$ computed sets.
\end{proof}


\begin{lemma}[\bf Monotonicity]
    \label{monotonicity} 
    If $ A \subseteq B \subseteq \mathcal{N}$, then  $ \overline{A} \subseteq \overline{B}$ and $\overline{R}(A) \geq \overline{R}(B)$.   
\end{lemma}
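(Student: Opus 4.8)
The plan is to prove both assertions using the explicit characterization of the expanded set from Lemma \ref{Compute expanded set}, namely $\overline{A} = A \cup \{i \in \mathcal{N} : p_i \geq \overline{R}(A)\}$, together with the definition of expanded revenue. I would actually prove the two statements in the opposite order from how they are listed: first establish the revenue inequality $\overline{R}(A) \geq \overline{R}(B)$, and then derive the set inclusion $\overline{A} \subseteq \overline{B}$ as a consequence, since the threshold defining each expanded set depends on the expanded revenue.

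First I would prove $\overline{R}(A) \geq \overline{R}(B)$. The key observation is that the defining maximization problem \eqref{eq:exrev} for $\overline{R}$ is taken over the feasible set $\{S \subseteq \mathcal{N} : A \subseteq S\}$, and when $A \subseteq B$ we have the containment of feasible regions $\{S : B \subseteq S\} \subseteq \{S : A \subseteq S\}$. Since $\overline{R}(A)$ and $\overline{R}(B)$ are both maxima of the same objective $R$ but taken over a larger and a smaller feasible set respectively, maximizing over a larger set can only increase the optimum. Concretely, the expanded set $\overline{B}$ satisfies $A \subseteq B \subseteq \overline{B}$, so $\overline{B}$ is itself a feasible candidate in the maximization defining $\overline{R}(A)$, giving $\overline{R}(A) \geq R(\overline{B}) = \overline{R}(B)$. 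This step is essentially immediate and is the easy half.

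Next, for the set inclusion, I would feed the revenue inequality into the threshold characterization from Lemma \ref{Compute expanded set}. Let $i \in \overline{A}$. By the characterization, either $i \in A$ or $p_i \geq \overline{R}(A)$. In the first case, $i \in A \subseteq B \subseteq \overline{B}$, so $i \in \overline{B}$. In the second case, since we just showed $\overline{R}(A) \geq \overline{R}(B)$, we get $p_i \geq \overline{R}(A) \geq \overline{R}(B)$, and applying the characterization of Lemma \ref{Compute expanded set} to $B$ yields $i \in B \cup \{j : p_j \geq \overline{R}(B)\} = \overline{B}$. Either way $i \in \overline{B}$, establishing $\overline{A} \subseteq \overline{B}$.

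The main subtlety to handle carefully, rather than a genuine obstacle, is the direction of the inequality and the role of the tie-breaking rule that defines $\overline{A}$ uniquely. The argument above uses only the revenue-maximizing property of the expanded set, not the maximum-cardinality tie-break, so the inclusion holds for the canonical representatives selected by Definition \ref{expanded set}; I would note that this is consistent because the threshold form $\{i : p_i \geq \overline{R}(A)\}$ already bakes in the largest-cardinality choice (products priced exactly at the threshold are all included). Thus the only genuine content is the feasible-region-containment argument for the revenue inequality, and everything else follows mechanically from Lemma \ref{Compute expanded set}.
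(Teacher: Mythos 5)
Your proof is correct and follows essentially the same route as the paper: the revenue inequality via containment of feasible regions in the maximization \eqref{eq:exrev}, and then the set inclusion by combining that inequality with the threshold characterization $\overline{A} = A \cup \{i \in \mathcal{N} : p_i \geq \overline{R}(A)\}$ from Lemma \ref{Compute expanded set}. Your elementwise case analysis is just a spelled-out version of the paper's one-line set containment, so the two arguments coincide.
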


\begin{proof}
    It is easy to see that when $A \subseteq B \subseteq \mathcal{N}$, every feasible solution for $\max_{S \subseteq \mathcal{N} , \; B \subseteq S} R(S)$ is a feasible solution for $\max_{S \subseteq \mathcal{N} , \; A \subseteq S} R(S)$. Hence, $\overline{R}(A) \geq \overline{R}(B)$. It follows that $\{i \in \mathcal{N}, p_i \geq \overline{R}(A) \} \subseteq \{i \in \mathcal{N}, p_i \geq \overline{R}(B) \},$ and therefore, by Lemma \ref{Compute expanded set}, we have $$\overline{A} = A \cup \{i \in \mathcal{N}, p_i \geq \overline{R}(A) \} \subseteq B \cup \{i \in \mathcal{N}, p_i \geq \overline{R}(B) \} = \overline{B}.$$
\end{proof}


\begin{lemma}[\bf Supermodularity]
    \label{supermodularity}
    The expanded revenue function $\overline{R}$ is  supermodular, i.e.,
     \black{for every $A\subseteq B \subseteq \mathcal{N}$ and $i\in \Nc\setminus B$,
     $$\overline{R}(B\cup\{i\}) - \overline{R}(B) \geq \overline{R}(A\cup\{i\}) - \overline{R}(A).$$}
\end{lemma}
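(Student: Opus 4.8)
The plan is to reduce supermodularity to a comparison of the two marginal increments $\overline{R}(A\cup\{i\})-\overline{R}(A)$ and $\overline{R}(B\cup\{i\})-\overline{R}(B)$, and to understand how such an increment depends on the underlying set. First I would record a clean dichotomy for a single marginal: by the threshold formula of Lemma~\ref{Compute expanded set}, the product $i$ already lies in $\overline{A}$ precisely when $p_i\ge\overline{R}(A)$, in which case $\overline{A}$ stays feasible and optimal for $A\cup\{i\}$, so $\overline{R}(A\cup\{i\})=\overline{R}(A)$ and the increment is $0$; conversely, when $p_i<\overline{R}(A)$, Lemma~\ref{Revenue variations} forces every revenue-maximal set containing $A\cup\{i\}$ to drop strictly below $\overline{R}(A)$ (removing the cheap product $i$ would strictly raise its revenue), so the increment is strictly negative. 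Combined with monotonicity, $\overline{R}(B)\le\overline{R}(A)$, this settles two of the three cases at once: if $p_i\ge\overline{R}(A)$ both increments vanish, and if $\overline{R}(B)\le p_i<\overline{R}(A)$ the $B$-increment is $0$ while the $A$-increment is $\le 0$. The only remaining case is $p_i<\overline{R}(B)\le\overline{R}(A)$, where both increments are negative and their magnitudes must be compared.

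For the remaining case I would replace the combinatorial maximization by an implicit, set-additive description of $\overline{R}$. Starting from the elementary equivalence $R(S)\ge\theta \iff \sum_{j\in S}(p_j-\theta)v_j\ge\theta$ and maximizing the left-hand side over $S\supseteq A$, one obtains $\overline{R}(A)\ge\theta \iff \Phi(\theta)\ge D_A(\theta)$, where $\Phi(\theta):=\sum_{j\in\Nc}\big(v_j(p_j-\theta)\big)^+-\theta$ is a fixed, strictly decreasing function and $D_A(\theta):=\sum_{j\in A} v_j(\theta-p_j)^+$ is non-negative, non-decreasing in $\theta$, and---crucially---\emph{additive} in $A$. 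Hence $\overline{R}(A)$ is the unique root of $\Phi(\theta)=D_A(\theta)$. Writing $B=A\uplus C$, passing from $A$ to $B$ merely adds the non-negative non-decreasing term $D_C$ to the baseline, while adjoining $i$ subtracts the fixed term $\delta_i(\theta):=v_i(\theta-p_i)^+$; this linear bookkeeping is exactly what makes the competing increments tractable.

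The heart of the argument is then a monotone shift comparison: I would show that the downward movement of the root caused by subtracting $\delta_i$ shrinks as the baseline grows. Following the root along the homotopy $t\mapsto \rho(t)$ defined by $\Phi(\rho)=D(\rho)+t\,\delta_i(\rho)$ for $t\in[0,1]$, the total drop equals $\int_0^1 \frac{\delta_i(\rho(t))}{-\Phi'(\rho(t))+D'(\rho(t))+t\,\delta_i'(\rho(t))}\,dt$, which is legitimate since the denominator is bounded below by $1$, making $\rho$ Lipschitz. Enlarging the baseline from $D_A$ to $D_B=D_A+D_C$ pushes the whole root path weakly downward, so the numerator $\delta_i(\rho(t))$ weakly decreases. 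For the denominator I would invoke the identity $-\Phi'(\theta)+D_A'(\theta)=1+V\!\big(A\cup\{j:p_j>\theta\}\big)$, which says the denominator is one plus the total weight of the current expanded assortment; the extra products of $C$ together with the downward shift of the operating point can only increase this weight, and on the complementary region, where $\rho(t)\le p_i$, the numerator vanishes anyway. Thus the integrand drops pointwise, the total drop shrinks, and $\overline{R}(B\cup\{i\})-\overline{R}(B)\ge\overline{R}(A\cup\{i\})-\overline{R}(A)$ follows.

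The main obstacle, and the step demanding the most care, is exactly this denominator comparison, because the two root paths are evaluated at different arguments and the slopes $\Phi'$ and $D'$ are only piecewise constant. The identity $-\Phi'+D_A'=1+V(\cdot)$ is what resolves it: it converts the slope comparison into a comparison of assortment weights, where the monotonicity of the expanded set (Lemma~\ref{monotonicity}) and the disjoint extra products $C$ render the inequality transparent, and restricting attention to the region $\rho(t)>p_i$ neutralizes the kink of $\delta_i$ at $p_i$. Everything else---the dichotomy, the two easy cases, and the absolute continuity underlying the integral representation---is routine.
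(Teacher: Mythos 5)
Your proposal is correct, and it reaches the lemma by a genuinely different route than the paper. The paper's argument is finite and combinatorial: it splits the desired inequality into two pieces, first showing $R(\overline{B}) - R(\overline{B}\cup\overline{A\cup\{i\}}) \leq R(\overline{A}) - R(\overline{A\cup\{i\}})$ by expanding both sides with the exact finite-difference formula of Claim~\ref{cl:computation} and invoking Lemma~\ref{monotonicity}, and then observing that $R(\overline{B}\cup\overline{A\cup\{i\}}) \leq R(\overline{B\cup\{i\}})$ simply because $\overline{B}\cup\overline{A\cup\{i\}}$ contains $B\cup\{i\}$ and is therefore one of the sets over which $\overline{R}(B\cup\{i\})$ maximizes; summing the two inequalities finishes the proof. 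You instead recast $\overline{R}(A)$ as the unique root of the piecewise-linear equation $\Phi(\theta)=D_A(\theta)$, where the penalty $D_A$ is \emph{additive} in $A$ --- a constrained generalization of the classical fixed-point characterization of the optimal MNL revenue --- and compare the two marginal drops via a homotopy: the drop is an integral whose numerator $\delta_i(\rho(t))$ weakly decreases when the baseline grows (the root path moves down), and whose denominator $1+V\bigl(A\cup\{j : p_j>\theta\}\bigr)$ weakly increases (that set grows both as $A$ grows and as $\theta$ falls), so the integrand decreases pointwise. I verified the key steps: the equivalence $\overline{R}(A)\geq\theta \iff \Phi(\theta)\geq D_A(\theta)$ is exact, the root comparison $\rho_B(t)\leq\rho_A(t)$ follows from $D_C\geq 0$, the slope identity holds at all non-kink values of $\theta$, and your handling of nonsmoothness is sound --- the kink of $\delta_i$ at $p_i$ is neutralized because the numerator vanishes there, while the remaining kinks are visited for a measure-zero set of times on the strictly decreasing portion of the path, which suffices since $\rho$ is Lipschitz and hence absolutely continuous. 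As for what each approach buys: the paper's proof is shorter and entirely elementary, requiring no limits or integrals; yours demands more analytic care but explains structurally \emph{why} supermodularity holds (forcing in a cheap product causes damage of the form $\delta_i/(1+\mathrm{weight})$, and enlarging the constraint set both shrinks the damage term and inflates the weight), and the additive root characterization it introduces is of independent interest --- for instance, it immediately re-derives the monotonicity statement of Lemma~\ref{monotonicity}, and your opening dichotomy (the increment vanishes iff $p_i\geq\overline{R}(A)$ and is strictly negative otherwise) is a clean standalone refinement of Lemma~\ref{Compute expanded set}.
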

\begin{proof}

Let $A,B\subseteq \Nc$ be a pair of assortments with $A\subseteq B$, and let $i\in \Nc\setminus B$. The proof of this result is separated into two steps. First, we show that\begin{equation}\label{eq:step1}
    R\left(\overline{B}\right) - R\left(\overline{B}\cup \overline{A\cup\{i\}}\right) \leq R\left(\overline{A}\right) - R\left( \overline{A\cup\{i\}}\right). 
\end{equation}
Subsequently, we show that\begin{equation}\label{eq:step2}
    R\left(\overline{B}\cup \overline{A\cup\{i\}}\right)- R\left(\overline{B\cup \{i\}}\right) \leq 0.
\end{equation}
The result directly follows by summing inequalities \eqref{eq:step1} and \eqref{eq:step2}.
\paragraph{Proving inequality \eqref{eq:step1}.} 
Let us start with the following claim, \black{whose proof appears in Appendix \ref{apx:computation}}.
\begin{claim}\label{cl:computation}
    For any pair of assortments $S_1, S_2\subseteq \Nc$ with $S_1\subseteq S_2$, we have\begin{equation*}
        R(S_1) - R(S_2) = \frac{1}{1+V(S_2)}\cdot \sum_{j\in S_2\setminus S_1}(R(S_1) - p_j)\cdot v_j.
    \end{equation*}
\end{claim}
\noindent Using this claim, we have
\begin{equation*}
    R\left(\overline{B}\right) - R\left(\overline{B}\cup \overline{A\cup\{i\}}\right) = \frac{1}{1+V
        (\overline{B}\cup \overline{A\cup\{i\}})}\cdot \sum_{j\in \overline{A\cup \{i\}}\setminus \overline{B}} \left(R\left(\overline B\right) - p_j\right)v_j.
\end{equation*}
Next, we have $R(\overline{B}) \geq p_j$ for all $j \notin \overline{B}$, since if there exists some $j\notin \overline{B}$ such that $p_j>R(\overline{B})$, adding this product to $\overline{B}$ would increase its expected revenue according to Lemma \ref{Revenue variations}, which would contradict the definition of the extended set of $B$. Therefore, since $V(\overline{B}\cup \overline{A\cup\{i\}})\geq V(\overline{A\cup\{i\}})$, we have
\begin{align}
    R\left(\overline{B}\right) - R\left(\overline{B}\cup \overline{A\cup\{i\}}\right) &\leq \frac{1}{1+V
        (\overline{A\cup\{i\}})}\cdot \sum_{j\in \overline{A\cup \{i\}}\setminus \overline{B}} \left(R\left(\overline B\right) - p_j\right)v_j \notag\\
        & \leq \frac{1}{1+V
        (\overline{A\cup\{i\}})}\cdot \sum_{j\in \overline{A\cup \{i\}}\setminus \overline{B}} \left(R\left(\overline A\right) - p_j\right)v_j,\label{eq:smallsum}
\end{align}
where \black{Equation~\eqref{eq:smallsum}} follows from Lemma \ref{monotonicity}.
Finally, \black{since $\overline{A}\subseteq \overline{B}$ by again Lemma~\ref{monotonicity},} we have $$\left(\overline{A\cup \{i\}}\setminus \overline{B} \right)\subseteq \left(\overline{A\cup \{i\}}\setminus  \overline{A}\right).$$
Moreover, for every $j \in \overline{A\cup \{i\}}\setminus  \overline{A}$, $j\notin \overline{A}$, and in particular, $p_j\leq R(\overline{A})$. Therefore, by adding the missing terms to the sum in Equation \eqref{eq:smallsum}, we have
    \begin{align*}
        R\left(\overline{B}\right) - R\left(\overline{B}\cup \overline{A\cup\{i\}}\right) & \leq \frac{1}{1+V
        \left(\overline{A\cup\{i\}}\right)}\cdot \sum_{j\in \overline{A\cup \{i\}}\setminus \overline{A}} \left(R\left(\overline A\right) - p_j\right)v_j\\
        & = R\left(\overline{A}\right) - R\left(\overline{A\cup \{i\}}\right),
    \end{align*}
where the equality follows from Claim \ref{cl:computation}.
\paragraph{Proving inequality \eqref{eq:step2}. }On the one hand, we have $\{i\}\subseteq\overline{A\cup \{i\}}$. Therefore, we have in particular ${\{i\} \subseteq \overline{B}\cup\overline{A\cup \{i\}}}$. On the other hand,  $B\subseteq \overline{B}$ and therefore $B\subseteq \overline{B}\cup\overline{A\cup \{i\}}$. Hence $B\cup \{i\}\subseteq \overline{B}\cup\overline{A\cup \{i\}}$. Recalling that $\overline{B\cup \{i\}}$ is by definition the maximum revenue assortment containing $B\cup \{i\}$, we have$$
    R\left(\overline{B\cup \{i\}}\right) \geq R\left(\overline{B}\cup\overline{A\cup \{i\}}\right),
$$

\end{proof}

\subsection{Computing Optimal Assortments} \label{subsection:algo}

In this section, we characterize the structure of optimal assortments for \ref{APV}, relying on the supermodularity property of the expanded revenue function. Moreover, we explain how to compute such an assortment in $O(n+T)$ time, leading to an exact polynomial-time algorithm.

\vspace{2mm}
\noindent
{\bf Optimal solution.} Consider an arbitrary \ref{APV} instance, and recall that $\ell_i$ is the minimum number of customers for which we must offer each product $i$. For $t \in \black{\{0\}\cup[T]},$ we define 
\begin{equation}\label{eq:theells}
    L_t = \{i \in \mathcal{N}:\, \ell_i = t \},
\end{equation}
Noting that $(L_t)_{0 \leq t \leq T}$ is a partition of $\cal N$, our candidate solution for \ref{APV} is given by offering each customer $t\in [T]$ the set of products
\begin{equation} \label{eq:sol}
    {S_t^*} = \overline{\bigcup_{t \leq u \leq T} L_u}.
\end{equation}

Since  $ \bigcup_{t+1 \leq u \leq T} L_u  \subseteq  \bigcup_{t \leq u \leq T} L_u$, the monotonicity property in Lemma \ref{monotonicity} implies that $S_{t+1}^* \subseteq S_{t}^*$ for any $t=0,\ldots,T-1$. Therefore, our solution has a nested structure, where
$ S_T^* \subseteq S_{T-1}^*\subseteq \cdots \subseteq S_1^*$. In the following, we prove that these assortments are optimal for \ref{APV}, and that they can be computed in polynomial time. Indeed, Lemma \ref{Compute expanded set} shows that each $S_t^*$ can be computed in $O(n)$ time , meaning that the entire solution can be computed in $O(nT)$ time. We can further improve this running time to $O(n+T)$ as shown below.



\begin{theorem}{}
    \label{Solution structure}
    The  sequence of assortments $(S_t^*)_{1 \leq t \leq T}$ defined in \eqref{eq:sol} is optimal for \ref{APV}. 
    Moreover, such a solution can be computed in $O(n+T)$ time.
\end{theorem}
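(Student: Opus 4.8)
The plan is to establish the two assertions separately: feasibility together with optimality of $(S_t^*)_{1\le t\le T}$, and then the $O(n+T)$ running time. Throughout, write $A_t = \bigcup_{t\le u\le T}L_u = \{i\in\Nc : \ell_i\ge t\}$, so that $S_t^* = \overline{A_t}$, and note that $A_1\supseteq A_2\supseteq\cdots\supseteq A_T$. Since the objective $\sum_t R(S_t)$ and the constraints $\sum_t \mathbbm{1}(i\in S_t)\ge \ell_i$ are both invariant under permuting the assortments, I will freely regard a feasible solution as an unordered multiset of $T$ assortments in which each product $i$ occurs at least $\ell_i$ times.

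Feasibility and the easy direction go as follows. A product $i$ with $\ell_i=s$ lies in $A_t$ for every $t\le s$, hence in $S_t^*=\overline{A_t}\supseteq A_t$ for every $t\le s$; thus $i$ is displayed at least $s=\ell_i$ times, and $(S_t^*)$ is feasible. The crux of optimality is a clean bound restricted to \emph{chains}. Suppose $C_1\supseteq C_2\supseteq\cdots\supseteq C_T$ is any feasible nested solution. In a chain the slots containing a fixed product $i$ form a prefix $\{1,\ldots,k_i\}$, so feasibility forces $k_i\ge \ell_i$, i.e.\ $C_t\supseteq\{i:\ell_i\ge t\}=A_t$ for every $t$. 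Since $C_t$ is then a feasible competitor in the maximization defining $\overline{R}(A_t)$, we get $R(C_t)\le \overline{R}(A_t)=R(S_t^*)$, and summing over $t$ yields $\sum_t R(C_t)\le \sum_t R(S_t^*)$. As $(S_t^*)$ is itself a feasible chain attaining this value, it is optimal \emph{among chains}.

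It remains to reduce an arbitrary feasible solution to a chain without decreasing revenue; this uncrossing step is the technical heart of the argument and is exactly where supermodularity enters. First, replacing each $S_t$ by its expanded set $\overline{S_t}$ only increases display counts (since $S_t\subseteq\overline{S_t}$) and satisfies $R(\overline{S_t})=\overline{R}(S_t)\ge R(S_t)$, so I may assume every assortment is \emph{closed}, i.e.\ equal to its own expanded set and hence satisfying $R(C)=\overline{R}(C)$. Now, whenever two assortments $C,C'$ in the solution are not nested, I replace the pair $\{C,C'\}$ by $\{\overline{C\cup C'},\,\overline{C\cap C'}\}$. Feasibility is preserved because for every product the two new sets contain it at least as often as the two old ones (both expanded sets being supersets of $C\cup C'$, resp.\ $C\cap C'$). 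For the value I invoke the lattice form of supermodularity of $\overline{R}$, equivalent to the marginal form of Lemma \ref{supermodularity}: telescoping the marginal inequality along the elements of $C\setminus C'$ gives $\overline{R}(C\cup C')+\overline{R}(C\cap C')\ge \overline{R}(C)+\overline{R}(C')$. Since $C,C'$ are closed the right side equals $R(C)+R(C')$, while the left side is exactly the revenue of the two new sets, so revenue does not decrease; and $\overline{C\cap C'}\subseteq\overline{C\cup C'}$ by Lemma \ref{monotonicity}, so the new pair is nested. To see that repeated application terminates, I use the potential $\Phi=\sum_t |S_t|^2$: for a non-nested pair, $|C\cup C'|^2+|C\cap C'|^2-|C|^2-|C'|^2 = 2\,|C\setminus C'|\,|C'\setminus C|>0$, and passing to the (larger) expanded sets only raises $\Phi$ further; since $\Phi\le T n^2$ is a bounded integer, the process stops at a feasible chain of revenue at least that of the original solution. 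Combined with the chain bound, this gives $\sum_t R(S_t)\le \sum_t R(S_t^*)$ for every feasible solution, proving optimality. I expect this uncrossing to be the main obstacle, precisely because plain $R$ is \emph{not} supermodular, so it is essential to work with closed sets and to phrase the exchange through expanded unions and intersections.

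For the running time the point is to compute the chain $(S_t^*)$ — rather, its compact description and objective value — in $O(n+T)$ instead of the naive $O(nT)$. By Lemma \ref{Compute expanded set}, $S_t^*=A_t\cup\{i:p_i\ge \theta_t\}$ with $\theta_t=\overline{R}(A_t)$, and since prices are sorted $\{i:p_i\ge\theta_t\}$ is a prefix $[k_t]$. As $t$ decreases from $T$ to $1$, the mandatory set $A_t$ grows by $L_t$ while $\theta_t$ is non-increasing (Lemma \ref{monotonicity}), so the prefix length $k_t$ is non-decreasing. I therefore sweep $t=T,\ldots,1$ maintaining membership of the current set together with its numerator $\sum p_iv_i$ and denominator $1+\sum v_i$: at each step I first insert the products of $L_t$, then advance a single pointer $k$, inserting each next price-ordered product while its price is at least the current revenue, updating numerator and denominator in $O(1)$ per insertion and recording $\theta_t$. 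Each product is inserted at most once over the whole sweep and the pointer $k$ only moves forward, so the total insertion work is $O(n)$, while the outer loop and the recording of the $\theta_t$ contribute $O(T)$. Hence the entire nested solution, encoded by the thresholds $\theta_1,\ldots,\theta_T$ (from which any individual $S_t^*$ is recovered in $O(n)$ on demand) together with the total revenue $\sum_t\theta_t$, is produced in $O(n+T)$ time.
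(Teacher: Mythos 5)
Your proof is correct, and while it turns on the same key lemma as the paper---supermodularity of the expanded revenue $\overline{R}$, deployed in a union/intersection exchange---its architecture is genuinely different. The paper argues by induction on $T$: Claim \ref{cl:intermediary} shows some optimal solution has $S_1=S_1^*$ (via an extremal choice maximizing $|\hat S_1\cap A|$ and a single uncrossing of $\hat S_1$ with an assortment $\hat S_u$ containing a missing mandatory product), after which the instance reduces to $T-1$ customers with requirements $\ell_i-\mathbbm{1}(i\in A)$. You instead uncross globally: close every set, repeatedly replace any non-nested pair $\{C,C'\}$ by $\{\overline{C\cup C'},\overline{C\cap C'}\}$, certify termination with the potential $\sum_t|S_t|^2$, and then dominate every feasible chain slotwise via $C_t\supseteq A_t=\bigcup_{u\geq t}L_u$, whence $R(C_t)\le \overline{R}(A_t)=R(S_t^*)$. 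This buys you two things. First, you avoid the induction; the price is the (easy) termination argument plus idempotence of the expansion operator, so that the newly created pair stays closed---a point you use implicitly but which follows at once from Lemma \ref{Compute expanded set} and the uniqueness of the largest-cardinality maximizer. Second, and more substantively, your insistence on closed sets is exactly the right fix for a step the paper glosses over: the paper writes the exchange inequality as $R(\hat S_1\cup\hat S_u)+R(\hat S_1\cap\hat S_u)\ge R(\hat S_1)+R(\hat S_u)$, citing supermodularity, but the plain revenue $R$ is \emph{not} supermodular (with $p_1=1$, $v_1=1$, $p_2=0$, $v_2=1$ one gets $R(\{1,2\})+R(\emptyset)=\tfrac13<\tfrac12=R(\{1\})+R(\{2\})$); the inequality is only available for $\overline{R}$, after replacing the union and intersection by their expanded sets, which is precisely your formulation and is the rigorous way to run the paper's own exchange. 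Finally, your $O(n+T)$ sweep (monotone price pointer, incremental numerator and denominator, each product inserted at most once) is the same as the paper's running-time argument, merely encoded by thresholds $\theta_t=\overline{R}(A_t)$ rather than by the set increments $S_t^*\setminus S_{t+1}^*$; both are legitimate compact outputs, since writing all $T$ assortments explicitly could cost $\Omega(nT)$.
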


\begin{proof}
    We prove the result by induction. First, when $T= 1$, for any product $i\in \Nc$, we either have $\ell_i=0$ or $\ell_i=1$. Noting that $L_1$ is the set of products $i$ for which $\ell_i=1$, \ref{APV} reduces to the problem of computing the optimal assortment that contains $L_1$, i.e., $
        \max_{S\subseteq \Nc\colon L_1\subseteq S}R(S),
    $
    whose solution is $\overline{L_1}$ by definition of the expanded set. By noticing that $S_1^* =\overline{L_1}$, we deduce that $S_1^*$ is the optimal solution to \ref{APV}.
    
    Now consider $T\geq 2$, and assume by induction that for any set of visibility constraints, the optimal solution of \ref{APV} for $T-1$ customers is given by the assortments defined in Equation \eqref{eq:sol}. Let us show that the latter assumption holds for $T$ customers.
    We denote by $A$ the set of all products that must be shown to at least one customer, i.e., $A\coloneqq \bigcup_{t=1}^TL_t$. We start by providing the following crucial intermediary claim, whose proof is presented on Page~\pageref{prf:intermediary}.
    \begin{claim}\label{cl:intermediary}
        There exists an optimal solution $S_1, \ldots, S_T$ to \ref{APV} with $S_1 =S_1^*$.
    \end{claim}
    In other words, Claim \ref{cl:intermediary} states that there exists an optimal solution $S_1,\ldots, S_T$ to \ref{APV} such that $S_1$ contains all products that must be shown at least once, i.e., those with $\ell_i\geq 1$. Consequently, this property allows us to focus only on feasible solutions of \ref{APV}, that offer $S_1^*$ to customer $1$. Maximizing the revenue amongst these solutions thereby guarantees attaining the optimal objective, and solving \ref{APV} is in turn equivalent to solving the following optimization problem:
    \begin{equation*}
        \begin{aligned}
         \max_{ S_2, \ldots, S_T \subseteq \mathcal{N}}  & \; \;      R(S_1^*)+\sum_{t=2}^T  R(S_t)   \\  
          s.t. \;\;   & \;\;  1+\sum_{t=2}^T  \mathbbm{1}(i \in S_t) \geq \ell_i, \;\;\; \forall i \in A,
        \end{aligned}
        \end{equation*}
    which itself reduces to the following instance of \ref{APV}:
    \begin{equation}\label{eq:reducedinstance}
        \begin{aligned}
         \max_{ S_2, \ldots, S_T \subseteq \mathcal{N}}  & \; \;      \sum_{t=2}^T  R(S_t)   \\  
          s.t. \;\;   & \;\;  \sum_{t=2}^T  \mathbbm{1}(i \in S_t) \geq \Tilde \ell_i, \;\;\; \forall i \in \Nc,
        \end{aligned}
    \end{equation}
    where $\Tilde \ell_i = \ell_i - \mathbbm 1(i\in A)$, for all $i\in \Nc$. Noting that this last problem is an instance of \ref{APV} with $T-1$ customers, we can apply the induction hypothesis, which implies that the optimal solution is given by Equation \eqref{eq:sol}. Directly applying this formula implies that $S_2^*, \ldots, S_T^*$ is an optimal solution to \eqref{eq:reducedinstance}, and hence that $S_1^*,\ldots,S_T^*$ is an optimal solution for \ref{APV}.
    \paragraph{Running time. }The running time of our algorithm for APV can be improved from $O(nT)$ to $O(n+T)$ as follows. We start by computing $S_T^*$. Then, proceeding by order of decreasing customer index, when computing each $S_t^*$, we do not need to directly compute $\llbracket\bigcup_{t \leq u \leq T} L_u\rrbracket$. Instead, since $S_t^*\supseteq S_{t+1}^* $, we should only check the products $\{i \in \mathcal{N}\backslash (S_{t+1}^* ) : p_i \geq \overline{R}(S_{t}^* ) \}$\black{. Since the latter set is price ordered, we can stop checking when reaching the first item we do not include in $S^*_t$. In the end, $|S_t^*|-|S_{t+1}^*|+1$ items are checked (including the first one that is not included), eventually leading to a running time of 
$$O(1)\cdot \left(1+|S_T^*|+\sum_{t=1}^{T-1} \left(1+|S_t^*|-|S_{t+1}^*|\right)\right)=O(|S_1^*|+T)= O(n+T).$$
}
    \paragraph{Proof of Claim \ref{cl:intermediary}.\label{prf:intermediary} }The proof of this result mainly relies on exploiting the supermodularity of $\overline{R}$, as stated in Lemma \ref{supermodularity}. \black{We start by showing that there exists a solution $S_1,\ldots, S_T$ to \ref{APV} such that $S_1\supseteq A$.} Assume by contradiction that there is no optimal solution of \ref{APV} with $S_1 \supseteq A$. Let $\hat S_1, \ldots, \hat S_T$ be the optimal solution of \ref{APV} that maximizes $|\hat S_1\cap A|$; in the case of ties, we pick an arbitrary solution that maximizes $|\hat S_1\cap A|$. Since $A \nsubseteq \hat S_1$, there exists some product $j\in A\setminus S_1$. Moreover, since $j\in A$, we know that $\ell_j\geq 1$, and therefore, this product must be shown to at least one customer, which means that there exists some $u\in [T]\setminus \{1\}$ with $j\in \hat S_u$. We define the following new solution to \ref{APV}: $S_1 = \hat S_1\cup \hat S_u$, $S_u = \hat S_1\cap \hat S_u$, and $S_t = \hat S_t$ for all $t\notin \{1,u\}$. First, this newly defined solution is also feasible since any product offered once in either $\hat S_1$ or $\hat S_u$ \black{(exclusive)} is also shown in $S_1$, and each product shown in both $\hat S_1$ and $\hat S_u$ is also shown in both $S_1$ and $S_u$. Second, $S_1,\ldots, S_T$ must be an optimal solution. Indeed, by the supermodularity\footnote{\black{Here, we use the equivalent definition of supermodularity: A function $f:\Omega \rightarrow \R$ is supermodular if ${f(A\cup B)} + f(A\cap B) \geq f(A)+ f(B)$ for all $A,B\subseteq \Omega$.}} property, we have $$
        R\left(\hat S_1\cup \hat S_u\right)+ R\left(\hat S_1\cap \hat S_u\right) \geq R\left(\hat S_1\right)+ R\left(\hat S_u\right).
    $$
    Therefore,$$
        \sum_{t=1}^TR\left(S_t\right) \geq \sum_{t=1}^TR\left(\hat S_t\right).
    $$
    Third, we have $|S_1\cap A| \geq |\hat S_1\cap A|+1$ since $(\hat S_1\cap A)\subsetneq (S_1\cap A)$, as the latter set contains $j$ whereas the former does not. This contradicts the definition of $\hat S_1,\ldots, \hat S_T$, as the optimal solution that maximizes $|\hat S_1\cap A|$, and thereby proves by contradiction that there exists a solution $S_1,\ldots, S_T$ with $S_1 \supseteq A$.
    
    Finally, we show that we can in particular take $S_1 = S_1^*$. On the one hand, the solution $A, S_2, \ldots, S_T$ is also feasible, as it is obtained by removing from $S_1$ all products $j$ such that $\ell_j=0$, which \black{are absent from the visibility constraints and thus cannot violate them}. Therefore, noting that $\overline A = S_1^*$, the solution $S_1^*, S_2, \ldots, S_T$ is also feasible, since $A\subseteq \overline{A}$. Finally, we have\begin{align*}
        R\left(S_1^*\right)+\sum_{t=2}^T R\left(S_t\right) &= R\left(\overline A\right)+\sum_{t=2}^T R\left(S_t\right)\geq \sum_{t=1}^TR(S_t),
    \end{align*}
    where the latter inequality follows by definition of the expanded set of $A$, and the fact that $A\subseteq S_1$\black{, giving $R(\overline{A})\geq R(\overline{S_1})\geq R(S_1)$}. In conclusion, there exists an optimal solution $S_1,\ldots, S_T$ of \ref{APV} with $S_1 = S_1^*$.
\end{proof}


\subsection{Linear Programming Formulation} \label{subsectin:lp}
\vspace{-2mm}
In this section, we provide a linear programming-based exact algorithm for solving \ref{APV}. For this purpose, consider the unconstrained assortment problem under the MNL model for a single customer, given by
\begin{equation}
\label{Unconstrained problem}
\begin{aligned}
\max_{S \subseteq \mathcal{N}} \quad  R(S),
\end{aligned}
\tag{\sf{AP}}
\end{equation}
where we recall that $R(S)$ is the expected revenue of the assortment $S$ in this model.
It is well-known that 
\ref{Unconstrained problem} can be formulated as the following LP (\cite{Gallego2011AGA}):
\begin{equation*}
    \begin{aligned}
     \max_{(\alpha_i)_{0 \leq i \leq n}} \;\; & \sum_{i=1}^n p_i \alpha_i \\
     s.t. \quad &  0 \leq \frac{\alpha_i}{v_i} \leq \alpha_0, &&\quad\forall i \in \mathcal{N}, \\ 
     & \sum_{i=0}^n \alpha_i = 1.
    \end{aligned}
\end{equation*}

Motivated by the structure of our optimal solution of \ref{APV}, as prescribed by Equation \eqref{eq:sol},
we propose the following linear formulation for  \ref{APV}:
  \begin{equation}\label{LP}\tag{\sf LP}
    \begin{aligned}
     \max_{\boldsymbol{\alpha}} \;\; & \sum_{i=1}^n p_i \sum_{t=1}^T \alpha_i^t \\
     s.t. \quad & \text{(A)}\quad&&  \sum_{i=0}^n \alpha_i^t = 1, &&\quad\forall t \in [T], \\ 
     &\text{(B)}\quad&& \frac{\alpha_i^t}{v_i} = \alpha_0^t, &&\quad\forall i \in \mathcal{N}, \;\; \forall t \in \black{[\ell_i]},\\
     &\text{(C)}\quad&&  0 \leq \frac{\alpha_i^t}{v_i} \leq  \alpha_0^t ,&&\quad\forall i \in \mathcal{N}, \;\; \forall t \in \black{[T]\setminus[\ell_i]}.
    \end{aligned}
    \end{equation}

\black{
\noindent In the above linear program, we invite the reader to think of the variable $\alpha_i^t$ as the probability that customer $t$ purchases product $i$, when optimal assortments are offered to customers. Our optimality proof later shows that these variables indeed correspond to the stated probabilities. Using this interpretation, the objective of \ref{LP} represents the expected revenue collected from our collection of customers. Constraint (A) ensures that every customer either purchases a product or opts for the no-purchase option, bringing the sum of probabilities of these events to $1$. For each product $i\in \Nc$, Constraint (B) links the probability that a customer $t\in[\ell_i]$ purchases product $i$ to the probability of leaving the market. In essence, it imposes that product $i$ is included in assortment $S_t$ when $t\in[\ell_i]$, which we know is optimal by virtue of Equation \eqref{eq:sol}. Finally, Constraint (C) bounds the choice probability of each product $i$ by customer $t \in [T]\setminus [\ell_i]$. We later show that using the properties of basic feasible solutions of linear programs that $\alpha_i^t$ adheres to either the upper or the lower bound, corresponding to either including product $i$ in assortment $S_t$ or excluding it.

In what follows, we use $\opt^{LP}$ and $\opt^{APV}$ to denote the optimum values of \ref{LP} and \ref{APV} respectively.
}

\begin{theorem} \label{thm:LPformulation}
    $\opt^{LP}=\opt^{APV}$. Moreover, given an optimal solution $\boldsymbol{\alpha}^*$ for \ref{LP}, we can construct an optimal solution $S_1^*, \ldots, S_T^*$ for \ref{APV} by taking $
        S_t^* = \{i\in \Nc\colon \alpha_i^t>0\}
    $
    for all $t\in [T]$.
\end{theorem}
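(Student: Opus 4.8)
The plan is to prove the two inequalities $\opt^{LP}\ge \opt^{APV}$ and $\opt^{LP}\le \opt^{APV}$ separately, where the second inequality will simultaneously establish the stated construction. The key starting observation is that \ref{LP} decouples across customers: the blocks of variables $(\alpha_0^t,\dots,\alpha_n^t)$ for distinct $t$ share no common constraint, so $\opt^{LP}=\sum_{t=1}^T \opt^{LP_t}$, where $LP_t$ is the single-customer program consisting of constraints (A), (B), (C) restricted to index $t$. Writing $F_t:=\bigcup_{t\le u\le T}L_u=\{i\in\Nc:\ell_i\ge t\}$ for the set of products forced by constraint (B) at time $t$, I would first note that any feasible point of $LP_t$ has $\alpha_0^t>0$ (otherwise (B) and (C) force every $\alpha_i^t=0$, contradicting (A)); this legitimizes the substitution $x_i:=\alpha_i^t/(v_i\alpha_0^t)\in[0,1]$, which rewrites $LP_t$ as the fractional-assortment program $\max\{g(x):x\in[0,1]^n,\ x_i=1\ \forall i\in F_t\}$ with $g(x)=\frac{\sum_i p_iv_ix_i}{1+\sum_i v_ix_i}$, whose value is exactly the contribution of customer $t$ to the \ref{LP} objective.

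For $\opt^{LP}\ge\opt^{APV}$, I would take the optimal \ref{APV} solution $(S_t^*)$ of Theorem~\ref{Solution structure} and set $\alpha_i^t=\phi(i,S_t^*)$. Constraint (A) is immediate, and (C) holds since $\alpha_i^t/v_i=\phi(0,S_t^*)$ when $i\in S_t^*$ and equals $0$ otherwise. The only point requiring care is (B): I must verify that each product $i$ is displayed in $S_t^*$ for every $t\in[\ell_i]$. This is exactly where the nested structure enters, since $i\in F_t$ for all $t\le\ell_i$ and $F_t\subseteq \overline{F_t}=S_t^*$. The resulting objective is $\sum_{t}\sum_i p_i\phi(i,S_t^*)=\sum_t R(S_t^*)=\opt^{APV}$.

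For $\opt^{LP}\le\opt^{APV}$ together with the construction, I would fix an optimal $\boldsymbol{\alpha}^*$; by the decoupling, each block $\boldsymbol{\alpha}^{t*}$ is optimal for $LP_t$, with value $r_t:=\sum_i p_i\alpha_i^{t*}=\opt^{LP_t}$. Define $S_t:=\{i:\alpha_i^{t*}>0\}$. Feasibility of $(S_t)$ for \ref{APV} is again guaranteed by (B): for $i\in F_t$ we have $\alpha_i^{t*}=v_i\alpha_0^{t*}>0$, so $i\in S_t$, i.e. $i$ is shown to customers $1,\dots,\ell_i$. It then remains to prove $R(S_t)=r_t$; summing over $t$ gives $\sum_t R(S_t)=\opt^{LP}$, and since $(S_t)$ is feasible this yields both $\opt^{APV}\ge\opt^{LP}$ and the optimality of the constructed solution. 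To establish $R(S_t)=r_t$, I would exploit that the restriction of $g$ to any single coordinate $x_j$ is a linear-fractional (Möbius) function, hence monotone; consequently optimality of $x^{*}$ (the fractional point attached to $\boldsymbol{\alpha}^{t*}$) forces $p_j=r_t$ for every strictly fractional coordinate $x_j^*\in(0,1)$, equivalently $\partial g/\partial x_j=\frac{v_j(p_j-g(x))}{1+\sum_i v_ix_i}=0$ there. Increasing all such coordinates up to $1$ then leaves $g$ unchanged — a one-line computation using $p_j=r_t$ shows the numerator and denominator of $g$ scale by the same factor — and the endpoint is precisely the indicator of $S_t$, whence $R(S_t)=g(x^{*})=r_t=\overline{R}(F_t)$.

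The main obstacle is this last step. An optimal LP solution need not be integral, so the support $S_t=\{i:\alpha_i^{t*}>0\}$ is not automatically the assortment realizing the expanded revenue $\overline{R}(F_t)$. The crux is the observation that strictly fractional products sit exactly at price $p_j=r_t$, which makes rounding their inclusion up to $1$ revenue-neutral — precisely the borderline behaviour captured by Lemma~\ref{Revenue variations}. Everything else, namely the decoupling across customers, the change of variables, and the verification of (A)–(C), is routine.
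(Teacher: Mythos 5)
Your proof is correct, and for the nontrivial direction it takes a genuinely different route from the paper's. Both arguments handle $\opt^{LP}\geq \opt^{APV}$ identically, by plugging in the choice probabilities $\alpha_i^t=\phi(i,S_t^*)$ of the nested optimal solution and checking (A)--(C). For the reverse direction, the paper works with an optimal \emph{basic} feasible solution and counts active constraints: with $T(n+1)$ variables, $T+\sum_i \ell_i$ equalities, and the observation that $\alpha_i^t\geq 0$ and $\alpha_i^t\leq v_i\alpha_0^t$ cannot both be tight, a basic optimum must satisfy $\alpha_i^t\in\{0,v_i\alpha_0^t\}$ for every free pair $(i,t)$, so its support directly yields assortments whose choice probabilities reproduce the LP values. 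You instead exploit the decoupling of \ref{LP} across customers, pass to the fractional assortment program via the substitution $x_i=\alpha_i^t/(v_i\alpha_0^t)$ (legitimate since $\alpha_0^t>0$ for any feasible point), and use first-order optimality of the linear-fractional objective to show that every strictly fractional coordinate sits at a price equal to the block optimum $r_t$, so raising the fractional coordinates of the support to $1$ is revenue-neutral --- precisely the borderline case of Lemma \ref{Revenue variations}. What your route buys: it proves the ``Moreover'' clause for an \emph{arbitrary} optimal solution $\boldsymbol{\alpha}^*$, exactly as the theorem is stated, whereas the paper's argument as written covers the support construction only for basic optima (for a general optimum one would have to pass to a basic one, which may have a different support); it also identifies $\opt^{LP_t}=\overline{R}(F_t)$, where $F_t=\{i\in\Nc\colon \ell_i\geq t\}$, tying the LP back to the expanded-revenue machinery of Section \ref{subsection:expanded}. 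What the paper's route buys: it is shorter, avoids the coordinate-wise monotonicity and stationarity analysis entirely, and rests on standard polyhedral facts about basic feasible solutions.
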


\begin{proof}
     On the one hand, we prove that $\opt^{LP} \geq \opt^{APV}$, by constructing a feasible solution to \ref{LP} whose objective is at least $\opt^{APV}$. On the other hand, using the same technique, we argue that $\opt^{LP} \leq \opt^{APV}$.

    \noindent {\em {First inequality: $\opt^{LP} \geq \opt^{APV}$}. } Recall that $(S_1^*, \ldots,S_T^*)$ stands for the optimal solution of \ref{APV}, whose definition is stated in Equation \eqref{eq:sol}. We introduce a solution $\boldsymbol{\alpha}$ for \ref{LP} such that for every customer $t\in [T]$ and every option $i\in \Nc \cup \{0\}$, \begin{equation*}
        \alpha_i^t = \phi(i, S_t^*),
    \end{equation*}
    where \black{we recall that} $\phi(i,S) = 0$ for every assortment $S\subseteq \Nc$ and every product $i\in \Nc\setminus S$ by convention.
    Let us show that $\boldsymbol \alpha$ is feasible. Constraints (A) and (C) are straightforward. Indeed, for Constraint (A), we have for all $t\in [T]$, \begin{equation*}
        \sum_{i=0}^n\alpha_i^t = \frac{1}{1+V(S_t^*)} + \sum_{i\in S_t^*}\frac{v_i}{1+V(S_t^*)} =1.
    \end{equation*}
    Constraint (C) is also directly satisfied by construction, as for all $i\in \Nc$ and $t\in [T]$, we have
    \begin{equation*}
        \black{0\leq}\; \alpha_i^{t} = \frac{v_i}{1+V(S_t^*)}\cdot \mathbbm 1\left(i\in S_t^*\right) \leq \frac{v_i}{1+V(S_t^*)} = v_i\alpha_{0}^t.
    \end{equation*}
    Regarding Constraint (B), let $i\in \Nc$ and $t\in \black{[\ell_i]}$. Then
    \black{\begin{align}
        i&\in L_{\ell_i} \label{eq:1}\\ & \subseteq \bigcup_{t\leq u\leq T}L_u \label{eq:2}\\ &\subseteq \overline{\bigcup_{t\leq u\leq T}L_u}\label{eq:3}\\ & = S_t^*\label{eq:4}.
    \end{align}}
    \black{Here, Equation~\eqref{eq:1} comes from the} definition of $L_{\ell_i}$ (see Equation \eqref{eq:theells}). \black{Equation~\eqref{eq:2}} is a consequence of $t\leq \ell_i\leq T$. \black{Equation~\eqref{eq:3}} follows from the fact that the extended set of any assortment $S\subseteq \Nc$ contains $S$. \black{Finally, Equation \eqref{eq:4}} is by definition of $S_t^*$, as given by Equation \eqref{eq:sol}.
    Therefore, $$
        \alpha_{i}^t = \frac{v_i}{1+V(S_t^*)} = v_i\alpha_0^t.
    $$
    Finally, the objective value of $\boldsymbol{\alpha}$ is precisely
    \begin{equation*}
        \sum_{i=1}^n p_i\sum_{t=1}^T \alpha_i^t = \sum_{t=1}^T\sum_{i\in S_t^*} p_i \phi(i,S_t^*) = \sum_{t=1}^TR(S_t^*) = \opt^{APV}.
    \end{equation*}
    Since the objective value of $\boldsymbol{\alpha}$ forms a lower bound on $\opt^{LP}$, we deduce that  ${\opt^{LP} \geq \opt^{APV}}$.

    \noindent {\em {Second inequality: $\opt^{LP} \leq \opt^{APV}$}. } Consider an optimal basic feasible solution $\boldsymbol{\alpha}$ to \ref{LP}. Since \ref{LP} is a linear program with $T\cdot(n+1)$ variables, any basic solution has at least $T\cdot(n+1)$ active constraints. Since \ref{LP} already contains $T+\sum_{i\in \Nc}\ell_i$ equality constraints, $\boldsymbol{\alpha}$ activates at least $\sum_{i\in \Nc}(T-\ell_i)$ constraints from the remaining $2\cdot \sum_{i\in \Nc}(T-\ell_i)$ inequality constraints. Moreover, \black{for each pair of constraints $\alpha_i^t \geq 0$ and $\alpha_i^t\leq v_i\alpha_0^t$, at most one can be satisfied with equality. Therefore, at most $\sum_{i\in \Nc}(T-\ell_i)$ inequality constraints can be active.} Thus, $\boldsymbol\alpha$ activates exactly $\sum_{i\in \Nc}(T-\ell_i)$ inequality constraints, and we have $\alpha_{i}^t \in \{0, v_i\alpha_0^t\}$, for all $i\in \Nc$ and $t\in \black{[T]\setminus[\ell_i]}$. Using this new observation, we construct a feasible solution $(S_1,\ldots, S_T)$ for \ref{APV} as follows: For all $t\in [T]$, let $S_t \coloneqq \{i\in \Nc\,\colon\, \alpha_i^t\neq 0\}$. In particular, if $i \in S_t$, then $\alpha_i^t = v_i\alpha_0^t$. It is easy to verify that $\alpha_i^t$ is exactly the choice probability of product $i$ out of the assortment $S_t^*$. Indeed, for all $t\in [T]$, we have$$
        \alpha_0^t+\sum_{i\in \Nc} \alpha_i^t= \alpha_0^t+ \sum_{i\in S_t}v_i \alpha_0^t= \alpha_0^t\cdot (1+V(S_t)),
    $$
    which implies that $\alpha_i^0 = \phi(0,S_t)$ using Constraint (A) of \ref{LP}, and thereby that $\alpha_i^t = \phi(i, S_t)$ for all $i \in S_t$.
    Next, we can easily see that $(S_1, \ldots, S_T)$ is feasible for \ref{APV}, as any product $i\in \Nc$ is included in \black{each of} the first $\ell_i$ assortments, which satisfies the visibility constraints. Finally, we have$$
        \opt^{LP} = \sum_{i\in \Nc}p_i \sum _{t=1}^T\alpha_i^t = \sum_{t=1}^T\sum_{i\in \Nc}p_i\phi(i,S_t) = \sum_{t=1}^TR(S_t) \leq \opt^{APV}.
    $$
\end{proof}

 \section{\ref{APV} with Cardinality Constraints}
    \label{sec:APVC}
In this section, we consider the cardinality constrained version of our fundamental model, referred to as {\em assortment optimization with visibility and cardinality constraints (\ref{APVC})}. Here, each customer can be shown at most $k$ products, where $k$ is a positive integer specified as part of the input description. Formally, \ref{APVC} is defined as follows: 
\begin{equation}
\label{APVC}
\tag{\sf{APVC}} 
\begin{aligned}
 \max_{ S_1, \ldots, S_T \subseteq \mathcal{N}} & \; \; \sum_{t=1}^T  R(S_t)  \\ \mathrm{s.t.} \;\; &\sum_{t=1}^T  \mathbbm{1}(i \in S_t) \geq \ell_i, \;\;\; &&\forall i \in \mathcal{N}, \\ & |S_t| \leq k, && \forall t \in [T].
\end{aligned}
\end{equation}

\paragraph{Hardness results.} First, we study the \black{computational} complexity of \ref{APVC}, showing that unlike its \black{polynomially-solvable} unconstrained counterpart, \ref{APVC} is strongly NP-hard.
\begin{theorem} \label{NP-hardness} \ref{APVC} is strongly NP-hard, even when all prices are equal. Moreover, \black{\ref{APVC} does not admit an FPTAS, even with equal prices}, unless $\mathrm{P} = \rm{NP}$.
\end{theorem}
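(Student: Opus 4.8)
The plan is to reduce from the $3$-\texttt{PARTITION} problem, which is strongly NP-complete: given $3m$ positive integers $a_1,\dots,a_{3m}$ with $\sum_j a_j = mB$ and $B/4 < a_j < B/2$, decide whether they can be partitioned into $m$ triples each summing to exactly $B$. Since this problem stays hard even when the integers are polynomially bounded, a reduction whose numerical parameters remain polynomially bounded will simultaneously deliver strong NP-hardness and set up the exclusion of an FPTAS. Given such an instance, I would construct an \ref{APVC} instance with $n = 3m$ products, all prices equal to $1$, preference weights $v_j = a_j$, a stream of $T = m$ customers, cardinality bound $k = 3$, and visibility requirements $\ell_j = 1$ for every product; this construction is clearly polynomial and keeps all numbers polynomially bounded.

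First I would pin down the feasible region via a counting argument. Every feasible solution satisfies $\sum_{t} |S_t| = \sum_j |\{t : j \in S_t\}| \geq \sum_j \ell_j = 3m$, while the cardinality constraints give $\sum_t |S_t| \leq Tk = 3m$. Hence both bounds are tight, forcing $|S_t| = 3$ for each $t$ and forcing each product to appear exactly once, so the feasible solutions are precisely the partitions of $\{1,\dots,3m\}$ into $m$ triples. Because all prices equal $1$, the revenue of an assortment is $R(S) = f(V(S))$ with $f(x) = x/(1+x)$, and the objective equals $\sum_{t=1}^m f(V(S_t))$ subject to the fixed total $\sum_t V(S_t) = mB$. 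As $f$ is strictly concave, Jensen's inequality gives $\sum_t f(V(S_t)) \leq m\,f(B)$, with equality if and only if every triple has weight exactly $B$, i.e. if and only if the $3$-\texttt{PARTITION} instance is a yes-instance. Thus the optimal value of \ref{APVC} equals $m\,B/(1+B)$ exactly when the answer is yes, which proves strong NP-hardness.

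For the FPTAS claim, the hard part is to quantify the revenue gap between yes- and no-instances and to show it is inverse-polynomial. I would expand $f$ to second order: for integers $x_1,\dots,x_m \geq 0$ with $\sum_t x_t = mB$, Taylor's theorem around $B$ yields $m\,f(B) - \sum_t f(x_t) = \tfrac12 \sum_t |f''(\xi_t)|\,(x_t - B)^2$ for suitable $\xi_t \leq mB$, using $\sum_t (x_t - B) = 0$. On a no-instance the integers $V(S_t)$ cannot all equal $B$, so at least one obeys $(V(S_t) - B)^2 \geq 1$; combined with $|f''(\xi)| = 2/(1+\xi)^3 \geq 2/(1+mB)^3$, this produces a deficit of at least $\hat g := 1/(1+mB)^3$ below $m\,f(B)$. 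Consequently yes-instances have optimal value exactly $m\,f(B)$, whereas no-instances have optimal value at most $m\,f(B) - \hat g$.

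Finally I would derive the contradiction. An FPTAS run with accuracy $\eps$ satisfying $\eps < \hat g / (m\,f(B))$ — a quantity bounded below by an inverse polynomial in $m$ and $B$, hence in the input size, since $m\,f(B)\leq m$ — runs in polynomial time and returns a solution whose (exactly computable) value exceeds $m\,f(B) - \hat g$ on every yes-instance, yet cannot exceed $m\,f(B) - \hat g$ on any no-instance. Comparing the returned value against the threshold $m\,f(B) - \hat g$ would therefore decide $3$-\texttt{PARTITION} in polynomial time, forcing $\mathrm{P} = \mathrm{NP}$. The genuinely delicate point is the gap estimate of the previous paragraph; the counting argument and the concavity inequality are routine by comparison.
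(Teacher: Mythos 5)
Your reduction is exactly the paper's: same $3T$ (your $3m$) unit-price products with weights $a_j$, $T=m$ customers, $k=3$, $\ell_j=1$; the counting argument forcing each product to appear exactly once and the Jensen upper bound $\sum_t f(V(S_t)) \leq m\,f(B)$ with equality iff all triples sum to $B$ also mirror the paper (which phrases the Jensen step through an auxiliary integer program). Where you genuinely diverge is in the quantitative gap on NO-instances, which is indeed the delicate step. The paper uses a local exchange argument: on a NO-instance some assortments have $V(S_u)\geq B+1$ and $V(S_v)\leq B-1$, and shifting one unit of weight between them shows the optimum falls short of $T\cdot\frac{B}{1+B}$ by at least $\frac{2}{B(B+1)(B+2)}$, a bound independent of $T$. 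You instead invoke strong concavity via a second-order Taylor expansion, obtaining the deficit $\frac{1}{2}\sum_t |f''(\xi_t)|(x_t-B)^2 \geq \frac{1}{(1+mB)^3}$; this is weaker (it degrades with $m$, since you bound $\xi_t$ only by the total weight $mB$ rather than localizing $\xi_t$ near $B$), but it is still inverse-pseudo-polynomial, which is all the FPTAS exclusion needs given that $3$-\texttt{PARTITION} remains hard with polynomially bounded integers — a point you correctly flag at the outset. Your argument is arguably more generic, as it would apply verbatim to any strictly concave per-customer objective, whereas the paper's exchange argument exploits the specific form of $x/(1+x)$ to get the cleaner, $T$-free constant. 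Both complete the proof; the final threshold-comparison contradiction against a hypothetical FPTAS is the same in both treatments.
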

The proof of this \black{result} relies on a reduction \black{from} the \texttt{3-PARTITION} problem, where the objective is to \black{decide whether a set of $3T$ positive integers can be partitioned} into $T$ triplets, such that the sum of elements within each triplet is identical. The full proof is \black{given in} Appendix \ref{apxhard}.
To the best of our knowledge, within the assortment optimization literature, there is only one other prominent choice model for which the unconstrained variant is solvable in polynomial time, while the cardinality constrained variant is NP-hard, which is the Markov chain model \citep{blanchet2016markov, desir2020constrained}. It is worth noting that despite this similarity, \black{under the Markov Chain choice model}, the cardinality constrained assortment optimization problem is APX-hard even with equal prices \black{\cite[Theorem~5]{desir2020constrained}}, whereas \ref{APVC} is strongly NP-hard, and admits a PTAS when prices are equal, as we proceed to show next.
\paragraph{Approximation scheme.} In the remainder of this section, we focus on the question of designing a polynomial time approximation scheme (PTAS), for the special case of equal prices. This setting corresponds to a sales maximization \black{(or market share)} problem, which is particularly useful when the platform or the company's goal is to maximize the captured portion of customers (see, e.g., \cite{desir2020constrained, derakhshan2022product, asadpour2023sequential}). \black{In particular, we design an algorithm which outputs a random solution for \ref{APVC}, whose objective value is, in expectation, within $1-\eps$ of optimal.}

\begin{theorem} \label{thm:PTAS}
For any $\eps>0$, \ref{APVC} with equal prices can be approximated within factor $1-\eps$ of optimal. The running time of our algorithm is $O(n^3T^{2^{O(\frac{1}{\eps}\log^2(\frac{1}{\eps}))}})$.
\end{theorem}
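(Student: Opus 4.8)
The plan is to normalize all prices to $1$, reduce \ref{APVC} to a pure sales-maximization problem, and then attack it through the three-stage strategy announced in Section~\ref{subsec:contributions}: guessing, a linear relaxation, and dependent rounding. Once prices are equal, the revenue of an assortment depends only on its total weight, since $R(S) = V(S)/(1+V(S)) =: g(V(S))$, where $g$ is increasing and concave. The objective $\sum_{t} g(V(S_t))$ is therefore nonlinear, and because $g$ is concave, a naive randomized rounding would lose value through Jensen's inequality; the entire purpose of the linearization step is to neutralize this effect by pinning down, up to a $(1-\eps)$ factor, the weight level of each assortment \emph{before} rounding.

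First I would discretize. Round every preference weight $v_i$ down to the nearest power of $(1+\eps)$, which perturbs each $V(S_t)$, and hence each $g(V(S_t))$, by at most a $(1-O(\eps))$ factor. Then, within each assortment, separate \emph{large} items — those whose rounded weight is at least an $\eps$-fraction of the weight scale that actually matters for $g$ — from \emph{small} items. Only weight up to roughly $1/\eps$ influences $g$ within a $(1-\eps)$ factor, beyond which $g$ is essentially saturated at $1$; consequently an assortment contains at most $O(\tfrac1\eps\log\tfrac1\eps)$ large items drawn from $O(\tfrac1\eps\log\tfrac1\eps)$ distinct weight classes, and the number of possible \emph{large-item configurations} compatible with the cardinality bound $k$ is at most $2^{O(\frac1\eps\log^2\frac1\eps)}=:C$.

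The guessing step enumerates, for a fixed optimal solution, how many of the $T$ customers receive an assortment of each large configuration; there are at most $(T+1)^{C}=T^{2^{O(\frac1\eps\log^2\frac1\eps)}}$ such profiles, which is exactly the $T$-dependence in the stated running time. Fixing one profile pins down the large weight $W_t$ of every assortment, so each contribution $g\bigl(W_t+(\text{small weight})\bigr)$ can be replaced by its first-order linear approximation in the small weight around $W_t$, accurate to within a $(1-O(\eps))$ factor because each small item carries at most an $\eps$-fraction of the relevant scale. This is precisely the linearization that removes the concavity obstruction. I would then write a linear program whose variables are the fractional assignments of specific large products (realizing the guessed configurations while honoring the visibility lower bounds $\ell_i$) and of small products to customer slots, whose constraints encode the residual cardinality budgets and the visibility requirements $\sum_t \mathbbm{1}(i\in S_t)\ge \ell_i$, and whose objective is the linearized revenue. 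For the correct guess this LP is a relaxation of the discretized instance, so its optimum is at least $(1-O(\eps))\,\opt^{APVC}$.

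Finally I would round the fractional optimum. The assignment variables live on a bipartite structure (products versus customer slots), so a dependent-rounding scheme in the spirit of \cite{segev2021efficient} yields an integral assignment that preserves all marginals exactly, keeps the per-customer degrees (cardinality) and the per-product degree lower bounds (visibility) intact, and induces negative correlation among the indicators. Since the objective is now \emph{linear} in these indicators, its expectation is preserved exactly, so the expected revenue of the rounded solution is within $(1-O(\eps))$ of the LP value, hence within $(1-\eps)$ of optimal after rescaling $\eps$. The main obstacle, and where the analysis must be most careful, is the linearization error: one has to choose the large/small threshold and the weight grid so that truncating the saturated tail of $g$, grouping by configuration, and linearizing in the small weight each cost only an $O(\eps)$ fraction, and then verify that the concavity of $g$ does not reintroduce a loss once the small weights are rounded — which is exactly what fixing the large configurations, and thereby the local slope $g'(W_t)$, is designed to guarantee.
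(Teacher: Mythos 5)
Your proposal follows the same overall skeleton as the paper (weight discretization, enumeration of per-customer weight/configuration profiles with $T^{2^{O(\frac{1}{\eps}\log^2\frac{1}{\eps})}}$ guesses, an LP relaxation, and dependent rounding), but it has a genuine gap at the final and hardest step. You claim that because the LP objective has been linearized, ``its expectation is preserved exactly, so the expected revenue of the rounded solution is within $(1-O(\eps))$ of the LP value.'' What dependent rounding preserves is the expectation of the \emph{linear proxy} $\sum_i v_i X_{it}$; the actual revenue of the rounded solution is still the nonlinear quantity $g(V(S_t)) = V(S_t)/(1+V(S_t))$, and since $g$ is concave and lies below its tangent, the Taylor linearization \emph{overestimates} $g$ pointwise, with an error that is only $O(\eps)$-small when the realized small weight stays close to the expansion point. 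Fixing the large configuration does not guarantee this: the realized small weight of a customer is random, and for a customer whose assortment consists mostly of small products (e.g., light customers, or customers whose weight is dominated by $D_0$-type items), a deviation of the rounded weight from its mean translates directly into a Jensen-type loss that your argument never bounds. This is precisely where the paper spends most of its effort: it needs the negative-correlation property of the rounding to get Chernoff-type concentration for medium/heavy customers (Lemma \ref{lem:nearoptimal}, Cases 1--2), and for light customers --- where the expected weight is too small for Chernoff bounds to bite --- a separate variance-based convexity argument (Claim \ref{cl:lightcusts}, using $\Var[W_t^{\sma}] \leq \eps^5 \cdot \E[W_t^{\sma}]$, which itself relies on the covariances being nonpositive).

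A second, related omission is the structure of the bipartite graph. You round on a ``products versus customer slots'' graph with one vertex per customer, but then degree preservation only fixes the \emph{total} number of products each customer receives, not the number per weight class. Consequently the rounded solution need not realize the guessed large configuration at all, so the expansion point $W_t$ of your linearization (and hence the slope $g'(W_t)$) is itself random, and your accuracy claim collapses. The paper avoids this by splitting each bounded customer into $Q+1$ vertices, one per product class, so that degree preservation applies class by class; combined with the fact that all products in a class $D_q$, $q\geq 1$, have identical weights, this makes the large-weight contribution $W_t^{\lar}$ deterministic (Claim \ref{cl:nonlightcust}) and confines all randomness to the $D_0$ products, which is what makes the concentration and variance arguments workable. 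In short, your three-stage plan is the right one, but the assertion that linearization ``neutralizes'' concavity is false as stated; without the per-class graph construction and the concentration/variance analysis of the rounded nonlinear objective, the proof does not go through.
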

In Section \ref{subsec:discret}, we show that any \ref{APVC} instance with equal prices can be \black{approximated by} a ``friendlier" instance, \black{where the weights of a particular subset of products are slightly altered}. Importantly, we only incur an $\eps$-loss due to this \black{approximation}. In Section \ref{subsec:linearsol}, \black{we write our approximate problem as an integer linear program} through a carefully crafted guessing procedure. In Section \ref{subsec:GKPS}, we elaborate on a dependent rounding scheme introduced by \cite{gandhi2006dependent}, which we subsequently utilize in Section \ref{subsubsec:bipartite} to obtain approximate solutions for the \black{above-mentioned integer linear program}, thereby completing the description of our PTAS. Finally, in Section \ref{subsec:analysis}, we show that our algorithm indeed computes a $(1-\eps)$-approximation.
\subsection{Step 1: Discretizing the Universe of Products}\label{subsec:discret}
By specializing \ref{APVC} to the setting with identical prices, our current model can be written as
\begin{equation}
\label{eq:SPVC}\tag{\sf{SPVC}}
\begin{aligned}
 \max_{ S_1, \ldots, S_T \subseteq \mathcal{N}} & \; \; \sum_{t=1}^T\frac{V(S_t)}{1+V(S_t)}  \\ \mathrm{s.t.} \;\; &\sum_{t=1}^T  \mathbbm{1}(i \in S_t) \geq \ell_i, \;\;\; &&\forall i \in \mathcal{N}, \\ & |S_t| \leq k, && \forall t \in [T].
\end{aligned}
\end{equation}

\paragraph{Weight rounding.} In what follows, we reduce an arbitrarily-structured \ref{eq:SPVC} instance  into a modified instance, by rounding down the \black{preference} weights of a particular subset of products. Then, \black{this alteration will be shown to incur} at most an $\eps$-loss.
For any product $i\in \Nc$, \black{we operate according to three cases:}\begin{itemize}
    \item If $v_i\geq 1/\eps$, then $v_i$ is rounded down to $\nota{v}_i = 1/\eps$.
    \item If $v_i< \eps^5$, then $v_i$ remains unchanged and we define $\nota{v}_i = v_i$.
    \item If $\eps^5\leq v_i < 1/\eps$, then $v_i$ is rounded down as follows: Let $\black{q_i}\geq 1$ be the unique integer for which $\eps^5\cdot(1+\eps)^{\black{q_i}-1} \leq v_i < \eps^5\cdot(1+\eps)^{\black{q_i}}$. Then $v_i$ is rounded down to $\nota{v}_i = \eps^5\cdot(1+\eps)^{\black{q_i}-1}$. Note that by this definition, we have \begin{equation}\label{eq:weightbound}
        \nota{v}_i \leq v_i\leq (1+\eps)\cdot \nota{v}_i.
    \end{equation}
\end{itemize}
For each $i\in \Nc$, we denote the product associated with the weight $\nota{v}_i$ simply by $\nota{i}$, \black{with $\nota{\Nc} = \{\nota{i}\,\colon\,i\in \Nc\}$} being our new universe of products. We invite the reader to think of the notation $\nota{\cdot}$, as a one-to-one map from the universe $\Nc$ to the modified universe $\nota{\Nc}$, where several weights have been rounded down. Similarly to \black{$V(\cdot)$, we define the total weight of an assortment of products $S\subseteq \nota{\Nc}$ by} $\nota{V}(S) \coloneqq \sum_{\nota{i} \in S} \nota{v}_i$.

\paragraph{Analysis.} We proceed by showing that \black{any approximation guarantee for the resulting instance would migrate to the original instance up to an extra multiplicative loss of $1-\eps$}. To this end, \black{note that the former instance can be written as}
\begin{equation}
\label{eq:SPVCd}\tag{$\widehat{\sf SPVC}$}
\begin{aligned}
 \max_{ S_1, \ldots, S_T \subseteq \nota{\mathcal{N}}} & \; \; \sum_{t=1}^T\frac{\nota{V}(S_t)}{1+\nota{V}(S_t)}  \\ \rm{s.t.} \;\; &\sum_{t=1}^T  \mathbbm{1}(\nota{i} \in S_t) \geq \ell_i, \;\;\; &&\forall \nota{i} \in \nota{\Nc}, \\ & |S_t| \leq k, && \forall t \in [T].
\end{aligned}
\end{equation}
Let $\opt$ and $\widehat\opt$ respectively denote the \black{optimum values of} \ref{eq:SPVC} and \ref{eq:SPVCd}. Additionally, for any sequence of assortments $S_1,\ldots, S_T\subseteq \Nc$, \black{we make use of the shorthand notation}
\begin{equation*}
    \obj(S_1,\ldots, S_T) = \sum_{t=1}^T\frac{V(S_t)}{1+V(S_t)},
\end{equation*}
and \black{similarly,} for any sequence of assortments $S_1,\ldots, S_T\subseteq \nota{\Nc}$, let 
\begin{equation*}
    \widehat{\obj}(S_1,\ldots, S_T) = \sum_{t=1}^T\frac{\nota{V}(S_t)}{1+\nota{V}(S_t)}.
\end{equation*}
In Lemma \ref{lem:epsobjective} below, \black{whose proof is presented in Appendix \ref{apx:epsobjective}}, we show that the objective achieved by any sequence of assortments $S_1,\ldots,S_T\subseteq \Nc$ is within a factor $1\pm\eps$ of the objective achieved by their rounded counterparts $\nota{S}_1,\ldots,\nota{S}_T$, where $\nota{S}_t \coloneqq {\{\nota{i}\,\colon\, i \in S_t\}}$. This claim implies that $\widehat\opt\leq \opt\leq (1+\eps)\cdot\widehat{\opt}$.

\begin{lemma}\label{lem:epsobjective}
    For any sequence of assortments $S_1,\ldots,S_T\subseteq \Nc$, we have $$\widehat{\obj}(\hat{S}_1,\ldots, \hat{S}_T)\leq \obj(S_1,\ldots, S_T)\leq (1+\eps)\cdot\widehat{\obj}(\nota{S}_1,\ldots, \nota{S}_T).$$
\end{lemma}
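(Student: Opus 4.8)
The plan is to prove both inequalities termwise, i.e.\ separately for each customer $t\in[T]$, since each objective is a sum of the per-customer terms $g(V(S_t))$ and $g(\nota V(\nota S_t))$, where I write $g(x)=x/(1+x)$. The crucial preliminary observation is that in all three rounding cases we have $\nota v_i\le v_i$, and hence $\nota V(\nota S_t)=\sum_{i\in S_t}\nota v_i\le \sum_{i\in S_t} v_i=V(S_t)$ for every $t$. Because $g$ is increasing on $[0,\infty)$, the left inequality $\widehat\obj(\nota S_1,\ldots,\nota S_T)\le \obj(S_1,\ldots,S_T)$ follows at once: apply $g$ to $\nota V(\nota S_t)\le V(S_t)$ and sum over $t$.

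The right inequality is the substantive part, and I would prove the termwise bound $g(V(S_t))\le (1+\eps)\,g(\nota V(\nota S_t))$ for every $t$, then sum. I would split on whether $S_t$ contains a \emph{large} product, i.e.\ some $i$ with $v_i\ge 1/\eps$. In the first case, that product has $\nota v_i=1/\eps$, so $\nota V(\nota S_t)\ge 1/\eps$ and therefore $g(\nota V(\nota S_t))\ge g(1/\eps)=\tfrac{1}{1+\eps}$; since $g(V(S_t))\le 1$ trivially, we get $g(V(S_t))\le 1=(1+\eps)\cdot\tfrac{1}{1+\eps}\le (1+\eps)\,g(\nota V(\nota S_t))$. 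In the second case, every $i\in S_t$ falls into the second or third rounding case, so $v_i\le(1+\eps)\nota v_i$ — this is exactly \eqref{eq:weightbound} in the third case and trivial in the second case where $v_i=\nota v_i$. Summing yields $V(S_t)\le(1+\eps)\,\nota V(\nota S_t)$; writing $x=\nota V(\nota S_t)$, monotonicity of $g$ gives $g(V(S_t))\le g((1+\eps)x)$, and a one-line computation finishes it: $g((1+\eps)x)=\tfrac{(1+\eps)x}{1+(1+\eps)x}\le (1+\eps)\tfrac{x}{1+x}=(1+\eps)g(x)$, the inequality being equivalent to $1+x\le 1+(1+\eps)x$ (and both sides vanishing when $x=0$).

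The main obstacle is precisely the large products: their weights are truncated by an arbitrarily large factor, so no multiplicative bound of the form $v_i\le(1+\eps)\nota v_i$ is available for them, and the weight-based argument of the second case breaks down. The resolution — and the reason the truncation threshold is fixed at $1/\eps$ — is that a single large product already saturates the logit term to within $\tfrac{1}{1+\eps}$ of its maximal value $1$, so that both $g(V(S_t))$ and $g(\nota V(\nota S_t))$ lie within a $(1+\eps)$ factor of $1$ and can be compared directly, without tracking the lost weight. Summing the termwise bounds over $t\in[T]$ delivers exactly the two inequalities of the lemma.
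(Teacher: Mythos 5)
Your proof is correct and follows essentially the same route as the paper's: the left inequality via $\nota v_i\le v_i$ and monotonicity of $x\mapsto x/(1+x)$, and the right inequality via the same case split on whether $S_t$ contains a product with $v_i\ge 1/\eps$ (using saturation at $1/(1+\eps)$ in that case, and Equation \eqref{eq:weightbound} plus summation otherwise). If anything, your write-up of the final step — passing from $V(S_t)\le(1+\eps)\nota V(\nota S_t)$ to $g(V(S_t))\le(1+\eps)g(\nota V(\nota S_t))$ via $g((1+\eps)x)\le(1+\eps)g(x)$ — is spelled out more explicitly than in the paper.
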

We defer the proof of this lemma to Appendix \ref{apx:epsobjective}.
In the remainder of this section, we focus on dealing with the reduced instance \ref{eq:SPVCd}, and for simplicity of notation, we denote \black{its underlying products} simply by $1,\ldots, n$. Similarly, their respective weights will be denoted by $v_1,\ldots,v_n$.

\subsection{Step 2: Linearization of the Objective Function}\label{subsec:linearsol}
In this section, we \black{propose a method to linearize} the optimization problem \ref{eq:SPVCd}, by guessing certain parameters pertaining to its optimal solution. We start by presenting our detailed guessing procedure, before leveraging the guessed parameters to construct a linearized formulation.

\paragraph{Customer types.} Let $S_1^*,\ldots, S_T^*$ be an (unknown) optimal solution for our instance \ref{eq:SPVCd}. \black{With respect to this solution}, we aim at categorizing customers \black{in terms of} to the total weight of the products they are offered. First, we say that customer $t\in [T]$ is {\em light} when $V(S_t^*) < \eps$. Similarly, customer $t$ is {\em heavy} when $V(S_t^*)\geq 1/\eps$. Otherwise, $\eps\leq V(S_t^*)<1/\eps$ and we say that this customer is {\em medium}. We further partition medium customers into $L$ classes $G_1, \ldots, G_L$, depending on the value of $V(S_t^*)$. Specifically, \black{each class $G_\ell$ is defined as}
\begin{equation*}
    G_{\ell} = \left\{ t\text{ medium }\,\colon\, \eps\cdot (1+\eps)^{\ell-1}\leq V(S_t^*)< \eps\cdot (1+\eps)^{\ell}\right\}.
\end{equation*}
\black{Here, $L$ is defined as} the smallest integer for which $\eps\cdot(1+\eps)^{L}\geq 1/\eps$, \black{meaning that} $L = O(\frac{1}{\eps}\log\frac{1}{\eps})$.
\black{Indeed, since $\eps\cdot(1+\eps)^{L-1} < 1/\eps$, we have
\begin{equation}\label{eq:Lorder}
L < 1 + \frac{\log{(1/\eps^2)}}{\log(1+\eps)} = O\left(\frac1\eps\log\frac1\eps\right).
\end{equation}
}As such, the classes $G_1,\ldots, G_L$ form a partition of medium customers. Additionally, we denote the sets of light and heavy customers by $G_{\rm light}$ and $G_{\rm heavy}$ respectively. 

\paragraph{Packing patterns.} Next, we proceed by defining classes of products. Recall from Section \ref{subsec:discret} that, for every product $i\in \Nc$ with $v_i \geq \eps^5$, either there exists some $1\leq q\leq Q-1$ for which $v_i = \eps^5\cdot(1+\eps)^{q-1}$ or $v_i = 1/\eps$, where $Q$ is the smallest integer such that $\eps^5\cdot (1+\eps)^{Q-1}\geq 1/\eps$. \black{In particular, we have $Q = O\left(\frac{1}{\eps}\log\frac1\eps\right)$, justified in the same manner as in Equation~\eqref{eq:Lorder}}. For every $1\leq q\leq Q-1$, let $D_q$ denote the set of products in $\Nc$ with $v_i = \eps^5\cdot (1+\eps)^{q-1}$, and let $D_Q$ denote the set of products with $v_i = 1/\eps$. Similarly, let $D_0$ be the class of products $i\in \Nc$ with $v_i<\eps^5$. By construction, the classes $D_0, \ldots D_Q$ form a partition of the universe of products $\Nc$.

\noindent Now, a packing pattern is a $(Q+1)$-dimensional vector, where each coordinate takes \black{one of the values $0,1,2,\ldots, 1/\eps^6, \star\,$}; here, $1/\eps^6$ is assumed to be an integer, without loss of generality. For a given customer $t\in [T]$, each entry $q=0,\ldots,Q$ of her associated packing pattern specifies the number of products from class $D_q$ in the assortment $S_t^*$, with the convention that $\star$ indicates that the number of products from class $D_q$ is strictly greater than $1/\eps^6$.
By this definition, the number of possible packing patterns is  $$\left(\frac{1}{\eps^6}+2\right)^{Q+1} = 2^{O(Q\log\frac{1}{\eps})}   =2^{O(\frac{1}{\eps}\log^2\frac{1}{\eps})}.$$ 

\paragraph{Guessing procedure.} We are now ready to present our guessing procedure. For each class of customers $\ell \in \{{\rm light}, {\rm heavy}, 1,\ldots, L\}$ and for each packing pattern $P$, we guess the number of customers in $G_{\ell}$ whose packing pattern is $P$. Let us designate this guessed value by $K_{\ell,P}$.
Since there are $L+2 = O(\frac{1}{\eps}\log\frac{1}{\eps})$ classes of customers and $O(2^{O(\frac{1}{\eps} \log^2(\frac{1}{\eps}))})$ packing patterns, the number of such pairs is $O(2^{O(\frac{1}{\eps} \log^2(\frac{1}{\eps}))})$.  \black{Due to having} $T$ customers, we have $T+1$ possible guesses for each \black{of these pairs}, meaning that the total number of guesses is $O(T^{2^{O(\frac{1}{\eps}\log^2(\frac{1}{\eps}))}})$.

It is important to note that this guessing procedure allows us to determine, for any customer $t\in [T]$, both her type and her packing pattern in the optimal solution $S_1^*,\ldots,S_T^*$, up to a permutation of $\black{[T]}$. Indeed, since for each class $\ell\in \{{\rm light}, {\rm heavy}, 1,\ldots, L\}$, we know the number of customers that \black{are characterized by} each packing pattern, we know the quantity $|G_\ell|$. Hence, since all customers are initially interchangeable, we can arbitrarily assign each customer to a type according to our guess. Subsequently, given our guess $K_{\ell, P}$ for every customer type $\ell$ and every packing pattern $P$, since the customers within the same type are interchangeable a priori, we arbitrarily assign each customer to a packing pattern, according to our guess. Consequently, for each customer $t\in [T]$, we have determined both her type and her packing pattern, up to a permutation of $\black{[T]}$.

\paragraph{Integer linear programming formulation.}\label{subsec:linearprog}
We are now ready to leverage these guessed parameters in order to introduce our integer linear programming formulation \ref{eq:ILP}. \black{In the sequel, an approximate solution to this formulation} will be shown to be near optimal for \ref{eq:SPVCd}.

Let us start with some useful notation.
For each customer $t$, we define a lower bound $V_t$ on $V(S_t^*)$ as follows. If $t$ is light then $V_t = 0$, and if $t$ is heavy, then  $V_t = 1/\eps$. Similarly, if $V_t$ is of type $\ell\in [L]$, \black{i.e., $t\in G_\ell$}, then $V_t = \eps\cdot (1+\eps)^{\ell-1}$. In addition, we denote the packing pattern of customer $t$ by $(k_{0,t}, k_{1,t},\ldots, k_{Q,t})$.
Using these newly-defined parameters, we formulate the following integer linear program:
\begin{equation}\label{eq:ILP}\tag{ILP}
 \begin{aligned}
\max_{\mathbf{x}} &&& \sum_{t\in G_{\rm light}}\sum_{i\in \Nc}v_ix_{it} \\
\textrm{s.t.}\quad&\text{(I)} &&\displaystyle\sum_{t=1}^T x_{it} \geq  \ell_i,&&\quad  \forall i\in \Nc ,\\
&\text{(II)} &&\sum_{i\in \Nc} x_{it} \leq k,&&\quad  \forall t\in [T],\\
&\text{(III)} && \sum_{i\in D_q}x_{it}  =k_{q,t}, && \quad \forall t\in [T],\,\forall q=0,\ldots,Q \text{ \black{with} } k_{q,t}\neq \star,\\
&\text{(IV)} && \sum_{i\in D_q}x_{it}  \geq \frac{1}{\eps^6}+1,  &&\quad \forall t\in [T],\,\forall q=0,\ldots,Q \text{ \black{with} } k_{q,t}= \star,\\
&\text{(V)} && \sum_{i\in \Nc} v_i x_{it}  \geq  V_{t}, & & \quad\forall t\in [T], \\
&\text{(VI)} && \sum_{i \in \Nc} v_ix_{it}  \leq  \eps, & &\quad\forall t\in G_{\rm light}, \\
& \text{(VII)} &&x_{it}  \in  \{0,1\}, & & \quad\forall i\in \Nc,\, \forall t\in [T].
\end{aligned}
\end{equation}


Here, for each customer $t\in [T]$ and each product $i\in \Nc$, $x_{it}$ is a binary variable indicating whether product $i$ is offered to customer $t$. \black{The first observation is that the objective function only includes light customers. Intuitively, this feature is motivated by already having a lower bound on the contribution of other types of customers within Constraint (V).} Constraints (I) and (II) respectively enforce the visibility and cardinality requirements, similar to the original formulation \ref{eq:SPVCd}. Constraints (III) and (IV) guarantee that for any customer $t\in [T]$ and any product class $q\in \{0,1,\ldots,Q\}$, the number of products in $D_q$ displayed to customer $t$ is consistent with the guessed packing pattern. Constraint (V) ensures that the lower bound $V_t$ on $V(S_t^*)$ is met for every customer $t\in [T]$. Constraint (VI) stipulates that for every light customer, the weight of her assortment is at most $\eps$. Finally, Constraint (VII) specifies that the decision variables $x_{it}$ are binary.  \black{It is worth mentioning that while some guesses from our procedure may result in infeasible linear programs, their infeasibility can be easily checked prior to proceeding with the algorithm, and they can be discarded as they are encountered. This does not affect the subsequent analysis, which relies on the guarantee that at least one guess --- specifically, the one corresponding to an optimal solution of \ref{eq:SPVCd} --- is optimal. As such, enumerating all the guesses will necessarily include this optimal guess. Finally, since our algorithm selects the best encountered guess, it is guaranteed to be near-optimal}.

Let $\mathbf{x^*}$ be an optimal solution to the linear relaxation of \ref{eq:ILP}, referred to as LP, and obtained by replacing the integrality constraint $x_{it}\in \{0,1\}$ by the relaxed constraint $x_{it} \in [0,1]$. The following lemma relates $\mathbf{x^*}$ to the optimal solution $S_1^*,\ldots, S_T^*$ of \ref{eq:SPVCd}. The proof of this result is included in Appendix \ref{apx:LPopt}.
\begin{lemma}\label{lem:LPopt}
    $
        \sum_{t=1}^T\frac{\sum_{i\in \Nc} v_ix^*_{it}}{1+\sum_{i\in \Nc} v_ix^*_{it}} \geq (1-\eps)\cdot\sum_{t=1}^T\frac{V(S_t^*)}{1+V(S_t^*)}.
    $
\end{lemma}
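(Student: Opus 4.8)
The goal is to lower-bound the optimal value of the linear relaxation of \ref{eq:ILP} in terms of the optimal value of \ref{eq:SPVCd}. The natural strategy is to exhibit a single feasible (integral) solution to the LP relaxation, namely the one induced by the optimal assortments $S_1^*,\ldots,S_T^*$ themselves, and argue that its contribution to the LP objective already accounts for all but an $\eps$-fraction of the \ref{eq:SPVCd} objective. The plan is to set $x_{it}=\mathbbm 1(i\in S_t^*)$, which is clearly feasible: Constraints (I)--(II) restate visibility and cardinality; Constraints (III)--(IV) hold because the guessed packing patterns $(k_{q,t})$ were \emph{defined} from this very optimal solution; and Constraints (V)--(VI) hold because $V_t$ is a lower bound on $V(S_t^*)$ and light customers satisfy $V(S_t^*)<\eps$. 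Since $\mathbf{x^*}$ is an optimal LP solution, its objective dominates that of this feasible point, so it suffices to bound the \ref{eq:SPVCd} objective by $(1+\eps)^{-1}$ (or $1-\eps$) times the \emph{light-customer} weight sum plus the guaranteed contributions of the other customer types.

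The crux is therefore a per-customer accounting argument, splitting the sum $\sum_t \frac{V(S_t^*)}{1+V(S_t^*)}$ by customer type. First I would handle light customers: for these, $V(S_t^*)<\eps$, and since the LP objective sums exactly $\sum_{t\in G_{\rm light}}\sum_i v_i x_{it}=\sum_{t\in G_{\rm light}}V(S_t^*)$, I can compare the concave term $\frac{V}{1+V}$ against $V$ using $\frac{V}{1+V}\le V$, so light customers contribute at most what the LP objective already collects. Second, for medium and heavy customers I would use the lower bound $V_t$ enforced by Constraint (V): because the function $g(V)=\frac{V}{1+V}$ is increasing, and because the class definition guarantees $V_t\le V(S_t^*)\le (1+\eps)V_t$, the ratio $\frac{V(S_t^*)/(1+V(S_t^*))}{V_t/(1+V_t)}$ is at most $1+\eps$; hence $\frac{V(S_t^*)}{1+V(S_t^*)}\le (1+\eps)\frac{V_t}{1+V_t}$, a quantity that is a fixed constant determined purely by the guess and thus absorbed as a known additive term.

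The main obstacle I anticipate is reconciling these two accounting styles with the LP's objective, which counts \emph{only} the light customers explicitly. The resolution is that the contributions of medium and heavy customers are not free variables but are pinned down to within a $(1+\eps)$ factor by the guessed parameters $V_t$ and the packing patterns, so they appear as a fixed constant that the LP need not re-optimize. Concretely I would write $\sum_t \frac{V(S_t^*)}{1+V(S_t^*)} \le \sum_{t\in G_{\rm light}} V(S_t^*) + (1+\eps)\sum_{t\notin G_{\rm light}}\frac{V_t}{1+V_t}$, identify the first term with $\sum_{t\in G_{\rm light}}\sum_i v_i x_{it}$ (the LP objective evaluated at the feasible integral point, hence $\le$ the value at $\mathbf{x^*}$), and verify that the second term is likewise collected by any LP-feasible solution because Constraint (V) forces $\sum_i v_i x_{it}\ge V_t$. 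Rearranging the factor $(1+\eps)$ to the claimed $1-\eps$ form via $\frac{1}{1+\eps}\ge 1-\eps$ then yields the stated inequality. The delicate point is ensuring that the per-type bounds combine cleanly and that the $\eps$-losses from weight rounding (already absorbed in Lemma~\ref{lem:epsobjective}) are not double-counted here; keeping the discretization loss separate from this relaxation loss is what I expect to require the most care.
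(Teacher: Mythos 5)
Your strategy is essentially the paper's own proof run in the reverse direction: the paper lower-bounds the LP ratio terms by $(1-\eps)$ times the corresponding terms of the optimum, splitting customers into heavy, medium, and light, and using the indicator solution $\hat X_{it}=\mathbbm 1(i\in S_t^*)$ together with LP optimality exactly where you do; you instead upper-bound the optimum by $(1+\eps)$ times the LP ratio sum. The two are equivalent, and your type-by-type decomposition is sound. However, two points in your write-up need repair before the argument closes.

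First, your justification for heavy customers is wrong as stated: the two-sided bound $V_t\le V(S_t^*)\le (1+\eps)V_t$ holds only for medium customers, since for a heavy customer $V(S_t^*)$ can exceed $(1+\eps)V_t = (1+\eps)/\eps$ by an arbitrary amount. The per-customer inequality you need, $\frac{V(S_t^*)}{1+V(S_t^*)}\le (1+\eps)\frac{V_t}{1+V_t}$, does still hold for heavy customers, but for a different reason: with $V_t = 1/\eps$ one has $(1+\eps)\frac{V_t}{1+V_t} = 1 \ge \frac{V(S_t^*)}{1+V(S_t^*)}$. Second, your light-customer accounting ends with the linear quantity $\sum_{t\in G_{\rm light}}\sum_{i\in\Nc} v_i x^*_{it}$ (the LP objective at $\mathbf{x^*}$), whereas the lemma's left-hand side involves the ratios $\frac{\sum_{i\in\Nc} v_i x^*_{it}}{1+\sum_{i\in\Nc} v_i x^*_{it}}$. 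Converting the former into $(1+\eps)$ times the latter requires invoking Constraint (VI) on $\mathbf{x^*}$ itself --- namely $\sum_{i\in\Nc} v_i x^*_{it}\le \eps$ for light $t$, so each denominator is at most $1+\eps$ --- but in your proposal (VI) appears only in the feasibility check of the integral point, not in the accounting. Both repairs are one-liners; with them, your chain $\sum_{t=1}^T\frac{V(S_t^*)}{1+V(S_t^*)}\le (1+\eps)\cdot\sum_{t=1}^T\frac{\sum_{i\in\Nc} v_i x^*_{it}}{1+\sum_{i\in\Nc} v_i x^*_{it}}$ goes through and yields the lemma via $\frac{1}{1+\eps}\ge 1-\eps$, matching the paper's conclusion.
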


\subsection{Dependent Rounding}\label{subsec:GKPS}
Our next step is to round $\mathbf{x^*}$ in order to obtain {an approximate solution to} \ref{eq:SPVCd}.
To this end, we briefly review the dependent rounding scheme of \cite{gandhi2006dependent}, which will be employed to round our fractional solution into an integral one. Given a bipartite graph $(U,V,E)$ and values $x_{ij}\in [0,1]$ for every edge $(i,j)\in E$, \cite{gandhi2006dependent} present a polynomial-time \black{algorithm that generates} a collection of Bernoulli random variables $\{X_{ij}\}_{(i,j)\in E}$ satisfying the next three properties:
\setenumerate[1]{label={(P\arabic*)}}
\begin{enumerate}
    \item\label{it:P1} {\bf Marginal distributions:} $\E[X_{ij}] = x_{ij}$ for every edge $(i,j)$.
    \item\label{it:P2} {\bf Degree-preservation:} The fractional degree of every vertex is rounded to its floor or to its ceiling, almost surely. Namely, \black{for all} $i\in U\cup V$, \black{letting $d_i = \sum_{j\in \delta(i)}x_{ij}$, we have $\sum_{j\in \delta(i)}X_{ij}\in\{\lfloor d_i\rfloor, \lceil d_i\rceil\}$. Here, $\delta(i)$ is the set of neighbors of vertex $i$.}
    \item\label{it:P3} {\bf Negative correlation:} For every vertex $i\in U\cup V$ and subset $S\subseteq \delta(i)$ of its neighbors
    \begin{equation*}
        \P\left[\bigwedge_{j\in S}(X_{ij}=0)\right]\leq \prod_{j\in S}\P\left[X_{ij} = 0\right]\quad
        \text{and}\quad \quad\P\left[\bigwedge_{j\in S}(X_{ij}=1)\right]\leq \prod_{j\in S}\P\left[X_{i
        j} = 1\right].
    \end{equation*}
\end{enumerate}
As pointed out by \citet[Section~3]{gandhi2006dependent}, the negative correlation {(P3)} leads to Chernoff-type bounds, as stated in the following lemma.
\begin{lemma}   [{\cite{panconesi1997randomized}}]\label{lem:chernoff}
    Suppose that $X_1,\ldots,X_n$ satisfy property \ref{it:P3}. Then, for \black{any $\eps \in [0,1]$} and any vector $a=(a_1,\ldots,a_n)\in [0,1]^n$, letting $a(X) = \sum_{i=1}^na_iX_i$, we have$$
        \P[a(X)\leq (1-\eps)\cdot \E[a(X)]] \leq \exp\left(-\frac{\eps^2\cdot \E[a(X)]}{2}\right).
    $$
\end{lemma}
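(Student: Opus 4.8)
The plan is to establish this bound through the classical Chernoff–Cramér exponential-moment method, the only genuine twist being that independence is replaced by the negative correlation guaranteed by \ref{it:P3}; this is essentially a reconstruction of the Panconesi–Srinivasan argument specialized to the lower tail. Write $\mu = \E[a(X)] = \sum_{i=1}^n a_i\E[X_i]$. I would first apply Markov's inequality to the nonnegative random variable $e^{-s\,a(X)}$ for an arbitrary parameter $s>0$, obtaining
$$\P[a(X)\leq (1-\eps)\mu] = \P\left[e^{-s\,a(X)}\geq e^{-s(1-\eps)\mu}\right] \leq e^{s(1-\eps)\mu}\cdot\E\left[e^{-s\,a(X)}\right].$$
Everything then reduces to controlling the moment generating function $\E[e^{-s\,a(X)}] = \E[\prod_{i=1}^n e^{-s a_i X_i}]$.

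The crux — and the single place where negative correlation enters — is the decoupling inequality $\E[\prod_i e^{-s a_i X_i}] \leq \prod_i \E[e^{-s a_i X_i}]$, which I would not expect to hold through a naive term-by-term comparison, since expanding $\prod_i\bigl(1 - (1-e^{-s a_i})X_i\bigr)$ produces alternating signs. To circumvent this I would substitute $X_i = 1 - Y_i$, so that $e^{-s a_i X_i} = e^{-s a_i}\cdot e^{s a_i Y_i}$ and $\E[\prod_i e^{-s a_i X_i}] = \bigl(\prod_i e^{-s a_i}\bigr)\cdot\E[\prod_i e^{s a_i Y_i}]$. Because $s>0$ and $a_i\geq 0$, writing $e^{s a_i Y_i} = 1 + (e^{s a_i}-1)Y_i$ (valid since $Y_i\in\{0,1\}$) and expanding the product yields only nonnegative coefficients; the ``all-zeros'' half of \ref{it:P3}, namely $\P[\bigwedge_{i\in S}(X_i=0)] \leq \prod_{i\in S}\P[X_i=0]$, is precisely the statement that $\E[\prod_{i\in S}Y_i]\leq \prod_{i\in S}\E[Y_i]$, so a term-by-term comparison now goes through cleanly and gives $\E[\prod_i e^{s a_i Y_i}]\leq \prod_i\E[e^{s a_i Y_i}]$. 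Undoing the substitution recovers the desired decoupling.

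With the decoupling in hand, the remainder is the standard independent-case computation. For each $i$, using $X_i\in\{0,1\}$ gives $\E[e^{-s a_i X_i}] = 1 - \E[X_i]\,(1-e^{-s a_i})$, and the concavity of $a\mapsto 1-e^{-sa}$ on $[0,1]$ — which is exactly where the hypothesis $a_i\in[0,1]$ is used — yields $1 - e^{-s a_i}\geq a_i(1-e^{-s})$; combined with $1-x\leq e^{-x}$ this gives $\E[e^{-s a_i X_i}]\leq \exp\bigl(-a_i(1-e^{-s})\E[X_i]\bigr)$. Multiplying over $i$ bounds the product by $\exp\bigl(-(1-e^{-s})\mu\bigr)$, so $\P[a(X)\leq(1-\eps)\mu]\leq \exp\bigl(\mu\,[\,s(1-\eps) - (1-e^{-s})\,]\bigr)$. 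Finally I would optimize over $s$, the minimizer being $s = \ln\frac{1}{1-\eps}$, which reduces the exponent to $\mu\bigl[(1-\eps)\ln\frac{1}{1-\eps} - \eps\bigr]$, and conclude via the elementary inequality $(1-\eps)\ln\frac{1}{1-\eps} - \eps\leq -\eps^2/2$, valid for $\eps\in[0,1]$. The main obstacle is the decoupling step; once the $X_i=1-Y_i$ substitution turns the lower-tail moment generating function into a product of increasing functions with nonnegative expansion coefficients, \ref{it:P3} applies directly and the rest is routine.
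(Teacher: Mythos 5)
Your proof is correct: the paper never proves this lemma itself --- it imports it wholesale from Panconesi and Srinivasan (1997), as noted via the citation --- and your argument is a faithful reconstruction of that cited proof, namely the exponential-moment method where the substitution $X_i = 1 - Y_i$ converts the lower-tail moment generating function into an expansion with nonnegative coefficients so that the ``all-zeros'' half of property \ref{it:P3} can be applied term by term. All the remaining steps (the concavity bound $1-e^{-sa_i} \geq a_i(1-e^{-s})$, which is where $a_i \in [0,1]$ is used, the choice $s = \ln\frac{1}{1-\eps}$, and the elementary inequality $(1-\eps)\ln\frac{1}{1-\eps} - \eps \leq -\eps^2/2$) check out.
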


\subsection{Step 3:  The Rounding Procedure}\label{subsubsec:bipartite}
Next, we \black{devise} a framework \black{for employing} the \black{above-mentioned} dependent rounding scheme in order to derive \black{an approximate solution to \ref{eq:SPVCd}}.
Noticing that this rounding procedure takes \black{an edge-weighted} bipartite graph as its input, our first step is to interpret the optimal solution ${\bf x}^*$ of LP as a collection of edge-associated values in a carefully constructed bipartite graph.
Before presenting this graph, let us introduce the next two definitions. We say that a customer $t\in [T]$ is {\em bounded} when $k_{q,t} \neq \star$, for all $q=1,\ldots, Q$, noting that $q$ starts from $1$ in this definition rather than from $0$. Otherwise, we say that customer $t$ is {\em unbounded}. In other words, customer $t$ is bounded when in her optimal assortment $S_t^*$, she is offered at most $1/\eps^5$ products from any of the product classes $D_1,\ldots, D_Q$.
\paragraph{The bipartite graph.}
We start by defining the vertex sets $U$ and $V$. The set $U$ is simply the set $\Nc$ of all products. To describe the set of vertices $V$, for every unbounded customer $t$, we introduce a vertex $t$. In addition, for every bounded customer $t$, we introduce $Q+1$ vertices, specifically, one vertex $(t,q)$ for each class of products $q=0,\ldots, Q$. Therefore, $$V = \{t\,:\, t\text{ unbounded}\} \cup \left\{(t,q)\,\colon \, t \text{ bounded}, q =0,\ldots,Q\right\}.$$
Let us now describe the edge set $E$ of this graph, as well as the weight of each edge $e\in E$. \black{To this end, every} vertex $i\in U$ is joined by an edge to every vertex $t$ of every unbounded customer $t$; the edge $(i,t)$ is associated with the value $x^*_{it}$. Next, suppose that $D_q$ is the class containing product $i$. Then $i$ is joined by an edge to every vertex $(t,q)$ for \black{every bounded customer $t$}; the edge $(i, (t,q))$ is associated with the value $x^*_{it}$. \black{It is worth noting that for every bounded customer $t$ and every product $i\in \Nc$, there exists a unique edge $(i, (t,q))\in E$ in the above-constructed bipartite graph, where $q$ corresponds to the index of the class $D_q$ containing product $i$. As such, there is a one-to-one correspondence between all edges of the form $(i, (t,q))$ in our bipartite graph and the set of pairs $\{(i,t):i\in \Nc\text{ and }t\text{ bounded}\}$. This allows us to label each edge connecting a vertex $i\in U$ to a vertex of the form $(t,q)\in V$ simply as edge $(i,t)$, without any risk of ambiguity. Adopting this simplified notation, the weight of every edge $(i,t)$ in our bipartite graph is $x_{it}^*$, for every product $i\in \Nc$ and every customer $t\in [T]$.
}
\paragraph{{\color{black} Rounding procedure.}} {\black{With respect to this bipartite graph,}} we can now apply the dependent rounding procedure of \cite{gandhi2006dependent} to derive a collection of random variables $(X_{it}\,\colon\, i\in \Nc, t\in[T])$ representing rounded values. Finally, we return the associated sequence $S_1,\ldots, S_T$ of random assortments, where $S_t= \{i \in \Nc \,\colon\, X_{it}=1\}$ for each \black{customer} $t\in [T]$.
\paragraph{Running time.}Since our algorithm is based on exhaustive enumeration, the steps presented in Sections \ref{subsec:linearsol} and \ref{subsubsec:bipartite} are applied to each possible guess considered in Section \ref{subsec:linearsol}. Solving LP takes $O((nT)^{O(1)})$ using standard linear programming algorithms. Constructing the bipartite graph takes $O(nTQ)$ time, and implementing the dependent rounding scheme requires $O(|E|\cdot(|L|+|R|)) = O(n^2T^2Q)$ time \citep[Theorem~2.3]{gandhi2006dependent}. Therefore, processing each guess takes $O((nT)^{O(1)}Q) = O((nT)^{O(1)}\cdot\frac{1}{\eps}\log{\frac{1}{\eps}})$ time, since $Q = O(\frac{1}{\eps}\log\frac{1}{\eps})$. Finally, as explained in Section \ref{subsec:linearsol}, there are $O(T^{2^{O(\frac{1}{\eps}\log^2\frac{1}{\eps})}})$ possible guesses, \black{meaning that} the overall running time of our algorithm is
$$
O\left(T^{2^{O(\frac{1}{\eps}\log^2\frac{1}{\eps})}} \cdot (nT)^{O(1)}\cdot\frac{1}{\eps}\log{\frac{1}{\eps}}\right) = O\left(n^{O(1)}T^{2^{O(\frac{1}{\eps}\log^2\frac{1}{\eps})}}\right),
$$
which is polynomial in the input size for any fixed $\eps$.

\subsection{Analysis}\label{subsec:analysis}
In this section, we prove that the sequence of random assortments $(S_1,\ldots,S_T)$ is near-optimal for \ref{eq:SPVCd}, \black{as formally} stated in the following theorem.

\begin{theorem}\label{thm:feasibleoptimal}
    The sequence of random assortments $S_1,\ldots, S_T$ is feasible for \ref{eq:SPVCd}, with an expected \black{objective value of}
    \begin{equation*}
        \E\left[\sum_{t=1}^T \frac{V(S_t)}{1+V(S_t)} \right]\geq (1-3\eps) \cdot \widehat{\opt}.
    \end{equation*}
\end{theorem}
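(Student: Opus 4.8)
The plan is to establish the two assertions of the theorem separately: feasibility of the random assortments $S_1,\ldots,S_T$, and the lower bound on their expected objective value. I would prove feasibility first, as it relies purely on the structural guarantees of the rounding scheme rather than on any probabilistic estimate. To show that $\sum_{t=1}^T\mathbbm 1(i\in S_t)\geq \ell_i$ for each product $i$, I would apply the degree-preservation property \ref{it:P2} to the vertices of $U=\Nc$. Since Constraint (I) of \ref{eq:ILP} (and hence of its relaxation LP) guarantees that the fractional degree $d_i=\sum_t x_{it}^*$ is at least the integer $\ell_i$, rounding the degree to its floor or ceiling keeps it at least $\ell_i$. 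For the cardinality constraint $|S_t|\leq k$, I would apply \ref{it:P2} to the vertices on the $V$ side: for unbounded customers this is immediate from Constraint (II), while for bounded customers I would argue that the per-class vertices $(t,q)$ have their degrees rounded to the floor or ceiling of the guessed pattern counts $k_{q,t}$, and that the packing-pattern Constraints (III), (IV) together with (II) force $\sum_q\lceil\sum_{i\in D_q}x_{it}^*\rceil\le k$; here I would need to use that the $k_{q,t}$ with $k_{q,t}\neq\star$ are exactly realized, so no slack is introduced beyond what (II) already tolerates.

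For the objective bound, I would first reduce the claim to the light customers, recalling that the objective of \ref{eq:ILP} only sums over $G_{\rm light}$, and that the contribution of medium and heavy customers is controlled deterministically through the lower bounds $V_t$ enforced by Constraint (V). Concretely, I would split $\E[\sum_t \tfrac{V(S_t)}{1+V(S_t)}]$ according to customer type and show that each type contributes at least a $(1-O(\eps))$ fraction of its share of the target $\widehat{\opt}$. The light customers are the delicate case, since for them $\tfrac{V(S)}{1+V(S)}\approx V(S)$ when $V(S)$ is small, so I would want to argue $\E[V(S_t)]=\sum_i v_i\E[X_{it}]=\sum_i v_i x_{it}^*$ by the marginal property \ref{it:P1}, and then relate $\E[\tfrac{V(S_t)}{1+V(S_t)}]$ to $\tfrac{\E[V(S_t)]}{1+\E[V(S_t)]}$. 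For medium and heavy customers, I would invoke Lemma \ref{lem:chernoff}: the negative correlation \ref{it:P3} yields a concentration statement guaranteeing that $V(S_t)$ does not fall much below its mean $\sum_i v_i x_{it}^*\geq V_t$, so that $\tfrac{V(S_t)}{1+V(S_t)}$ stays within a $(1-\eps)$ factor of the target with high probability; the residual failure probability must be absorbed into the loss.

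The main obstacle, and the step I expect to carry the most weight, is controlling the light customers and chaining all the losses into a single clean $(1-3\eps)$ bound. The difficulty is twofold. First, the function $x\mapsto \tfrac{x}{1+x}$ is concave, so Jensen's inequality points the wrong way when passing from $\E[\tfrac{V(S_t)}{1+V(S_t)}]$ to $\tfrac{\E[V(S_t)]}{1+\E[V(S_t)]}$; I would need a careful argument exploiting that light customers have $V(S_t)\le \eps$ almost surely (a deterministic consequence of the degree-preservation bounds together with Constraint (VI) applied through the rounding), on which regime the ratio is essentially linear in $V(S_t)$ up to a $(1\pm O(\eps))$ distortion, so linearity of expectation via \ref{it:P1} can be applied with only an $\eps$-loss. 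Second, for the heavy and medium customers the Chernoff bound of Lemma \ref{lem:chernoff} gives a guarantee that holds only with high probability, and converting ``$V(S_t)\ge(1-\eps)V_t$ except with tiny probability'' into a bound on the expected ratio requires that the failure event contributes negligibly; since $V_t\ge\eps$ for these customers, the exponent $\tfrac{\eps^2 V_t}{2}$ is large enough to make the failure loss lower-order, but one must verify this uniformly. Finally I would combine Lemma \ref{lem:LPopt}, which already gives $\sum_t \tfrac{\sum_i v_i x_{it}^*}{1+\sum_i v_i x_{it}^*}\ge(1-\eps)\sum_t\tfrac{V(S_t^*)}{1+V(S_t^*)}=(1-\eps)\widehat{\opt}$, with the per-type rounding losses, so that the three separate $\eps$-factors (the LP gap, the light-customer linearization, and the concentration tail) multiply or add to the stated $1-3\eps$ guarantee.
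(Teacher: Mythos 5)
Your feasibility argument and the overall architecture of the objective bound (per-customer estimates combined with Lemma \ref{lem:LPopt}, with the losses chaining to $1-3\eps$) match the paper. However, the probabilistic core of your proposal has two genuine gaps, precisely at the steps you flagged as carrying the most weight. The first concerns light customers: you assert that $V(S_t)\leq\eps$ holds almost surely as ``a deterministic consequence of the degree-preservation bounds together with Constraint (VI).'' This is false. Property \ref{it:P2} preserves cardinalities (vertex degrees), not weighted sums, and the products inside class $D_0$ have heterogeneous weights, only bounded above by $\eps^5$. The rounding can therefore select, among the $D_0$-products with positive fractional value, a subset of the prescribed cardinality consisting of the heaviest ones, so $W_t^{\sma}=\sum_{i\in D_0}v_iX_{it}$ can exceed its mean $w_t^{\sma}$ by a large multiplicative factor, and $V(S_t)>\eps$ occurs with positive probability. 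Your linearization argument collapses at this point, and Chernoff cannot rescue it since $\E[V(S_t)]$ may be arbitrarily small for light customers. The paper's Claim \ref{cl:lightcusts} instead uses a second-moment argument: a weighted Jensen inequality applied to the convex map $w\mapsto 1/(1+w+w_t^{\lar})$ reduces the problem to bounding $\Var[W_t^{\sma}]$, which is at most $\eps^5\cdot\E[W_t^{\sma}]$ because the covariances $\Cov[X_{it},X_{jt}]$ are nonpositive by the negative-correlation property \ref{it:P3} and each $v_i<\eps^5$. Your proposal contains no substitute for this variance control.

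The second gap concerns medium and heavy customers. You invoke Lemma \ref{lem:chernoff} with ``exponent $\eps^2V_t/2$,'' but this is quantitatively empty: the lemma requires coefficients in $[0,1]$, so weights as large as $1/\eps$ must be rescaled (shrinking the exponent further), and even setting that aside, $V_t\geq\eps$ yields a failure probability of roughly $\exp(-\eps^3/2)\approx 1$, i.e., no guarantee whatsoever. The paper's exponents are large not because $V_t$ is bounded below, but because of the class and packing-pattern structure that your proposal never exploits: within each class $D_q$ with $q\geq 1$ all weights are equal, so degree preservation makes those contributions \emph{deterministic} (Claim \ref{cl:nonlightcust}); the only genuine randomness is in the $D_0$ contribution, where the normalized coefficients $v_i/\eps^5$ lie in $[0,1]$ and, in the only case where this contribution matters (namely $w_t^{\sma}>\eps\sum_{i\in\Nc}v_ix_{it}^*$), the Chernoff exponent is at least $1/(2\eps)$; for unbounded customers, the bound is applied to the \emph{count} of products in a starred class, whose expectation exceeds $1/\eps^6$. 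Without these ingredients, the concentration step of your argument fails, so both delicate cases of the proof would need to be reworked along the paper's lines.
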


    In what follows, we start by showing that the sequence $S_1, \ldots,S_T$ is always feasible for \ref{eq:SPVCd}. Subsequently, we show that the expected \black{objective value} of this sequence is within factor $1-3\eps$ of optimal. We remind the reader that $S_1^*,\ldots, S_T^*$ is an optimal sequence of assortments for \ref{eq:SPVCd}.
    
    \paragraph{Feasibility.} First, to verify that the visibility constraint of every product $i\in \Nc$ \black{is satisfied}, \black{note that} by constraint (I) of LP, we have $\sum_{t=1}^Tx_{it}^* \geq \ell_i$. Noticing that $\sum_{t=1}^Tx_{it}^*$ is the fractional degree of vertex $i$ in the bipartite graph, by property \ref{it:P2}, we have $\sum_{t=1}^TX_{it} \geq \lfloor\ell_i\rfloor = \ell_i$.
    
    We now show that our cardinality constraint is always satisfied as well, meaning that each customer is offered at most $k$ products. First, for every unbounded customer $t$, the cardinality of $S_t$ is simply given by the degree of vertex $t$. Since $\sum_{i=1}^nx_{it}^* \leq k$, by property \ref{it:P2}, we have $|S_t| = \sum_{i\in \Nc}X_{it} \leq k$. For a bounded customer $t$, for every class $q=1,\ldots,Q$, the fractional degree of vertex $(t,q)$ is $\sum_{i\in D_q}
    x_{it}^*=k_{q,t}$, according to contraint (III) of \ref{eq:ILP}. Since $k_{q,t}$ is an integer, the degree of vertex $(t,q)$ remains unchanged with respect to $\bf X$, according to property \ref{it:P2}. Therefore, we almost surely have\begin{align}
        \sum_{i\in \Nc} X_{it}&= \sum_{i\in D_0}X_{it} + \sum_{q=1}^Q\sum_{i\in D_q}X_{it} \notag\\ &\leq \left\lceil\sum_{i\in D_0}x_{it}^*\right\rceil+ \sum_{q=1}^Q k_{q,t} \label{eq:5}\\ &= \left\lceil\sum_{i\in D_0}x_{it}^*+ \sum_{q=1}^Q k_{q,t} \right\rceil\label{eq:6}\\ &= \left\lceil\sum_{i\in \Nc}x_{it}^* \right\rceil \notag\\
        &\leq \lceil k\rceil \label{eq:7}\\
        &=k.\notag
    \end{align}
    Here, \black{Equation~\eqref{eq:5}} follows from Property \ref{it:P2}. \black{Equation~\eqref{eq:6} holds since} the second summand $\sum_{q=1}^Q k_{q,t}$ is an integer. \black{Equation~\eqref{eq:7}} follows from constraint (II) of LP. This proves the cardinality constraint and concludes the feasibility proof. 
    \paragraph{Near-optimality.} We \black{proceed by showing} that the expected contribution of every customer $t\in [T]$ to the objective function, i.e., $\E[V(S_t)/(1+V(S_t))]$ is within factor $1-2\eps$ of her contribution in the optimal solution $\mathbf x^*$. Formally, we have the following lemma, whose proof is provided in Appendix \ref{apx:nearoptimal}.
    \begin{lemma}\label{lem:nearoptimal}
        For every customer $t\in [T]$, we have$$
            \E\left[\frac{V(S_t)}{1+V(S_t)}\right] \geq (1-2\eps)\cdot\frac{\sum_{i\in \Nc}v_ix_{it}^*}{1+\sum_{i\in \Nc}v_ix_{it}^*}.
        $$
    \end{lemma}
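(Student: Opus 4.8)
The plan is to compare the random weight $V(S_t)$ against its mean and exploit the shape of the map $g(x) = x/(1+x)$. The starting point is that the dependent rounding preserves marginals, so by property \ref{it:P1} we have $\E[V(S_t)] = \sum_{i\in\Nc} v_i\,\E[X_{it}] = \sum_{i\in\Nc}v_i x_{it}^* =: \mu_t$, which is exactly the argument of $g$ on the right-hand side of the claim. Since $g$ is concave, Jensen's inequality points the wrong way, so the entire difficulty is to produce a matching \emph{lower} bound on $\E[g(V(S_t))]$.

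I would rely on two complementary estimates. The first is the elementary inequality $g(x) \geq g(\mu_t) + g'(\mu_t)(x-\mu_t) - (x-\mu_t)^2$, valid for every $x\geq 0$ because $g''(x)=-2/(1+x)^3 \geq -2$ on $[0,\infty)$; taking expectations and using $\E[V(S_t)]=\mu_t$ gives $\E[g(V(S_t))] \geq g(\mu_t) - \Var(V(S_t))$, which is decisive when the variance is tiny. The second is a concentration statement: combining the elementary inequality $g((1-\eps)\mu_t) \geq (1-\eps)g(\mu_t)$ with monotonicity of $g$ yields $\E[g(V(S_t))] \geq (1-\eps)g(\mu_t)\cdot\P[V(S_t)\geq(1-\eps)\mu_t]$, so that $\P[V(S_t) < (1-\eps)\mu_t]\leq\eps$ alone forces $\E[g(V(S_t))]\geq (1-\eps)^2 g(\mu_t)\geq (1-2\eps)g(\mu_t)$. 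Thus, in each regime it suffices to control either $\Var(V(S_t))$ or the lower tail of $V(S_t)$.

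The crucial structural input is the geometry of the bipartite graph from Section \ref{subsubsec:bipartite}. For a bounded customer, each class $D_q$ with $q\geq 1$ hangs off its own vertex $(t,q)$ of \emph{integral} fractional degree $k_{q,t}$, and all products of $D_q$ share one weight; by the degree-preservation property \ref{it:P2} exactly $k_{q,t}$ of them are selected, so the contribution of $D_1,\ldots,D_Q$ to $V(S_t)$ is a deterministic constant $D_t$ and all randomness is confined to $D_0$, whose weights lie below $\eps^5$. Writing $V(S_t)=D_t+R_t$ with $R_t=\sum_{i\in D_0}v_iX_{it}$, negative correlation (property \ref{it:P3}) yields $\Var(V(S_t))=\Var(R_t)\leq \sum_{i\in D_0}v_i^2 x_{it}^*\leq \eps^5\mu_t$, and Lemma \ref{lem:chernoff} applies to $R_t$ directly with coefficients $a_i=v_i\in[0,1]$. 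I would then split on $\mu_t$: for small $\mu_t$ the estimate $\Var(V(S_t))\leq\eps^5\mu_t\leq 2\eps\,g(\mu_t)$ closes the case via the first bound, while for large $\mu_t$ either $D_t\geq(1-\eps)\mu_t$ already (so $V(S_t)\geq(1-\eps)\mu_t$ surely) or $\E[R_t]>\eps\mu_t$ is large enough for Chernoff to give $\P[R_t<\E[R_t]-\eps\mu_t]\leq\eps$. Unbounded customers need separate treatment, but here the defining condition $k_{q,t}=\star$ for some $q\geq 1$ forces more than $1/\eps^6$ products of weight at least $\eps^5$, whence $\mu_t\geq 1/\eps$ and the customer is heavy; a Chernoff bound (applied after rescaling the weights, or after separating products by weight scale) then again supplies the lower-tail estimate, using that $\mu_t$ is large.

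The main obstacle I anticipate is the intermediate regime, where $\mu_t$ is neither small enough for the crude variance bound nor large enough for an off-the-shelf Chernoff bound, and where individual preference weights (as large as $1/\eps$) can be comparable to $\mu_t$ itself, so that a single rounding decision moves $V(S_t)$ substantially. The resolution is exactly the structural decomposition above: isolating the deterministically-rounded large classes confines all harmful randomness to $D_0$-weights below $\eps^5$, and the observation that large weights can only accompany large totals (many high-weight products push $\mu_t$ up, and unbounded customers are automatically heavy) guarantees that whenever the variance bound degrades the concentration bound takes over. Assembling the bounded sub-cases together with the heavy-customer treatment of unbounded customers, and verifying in each that $\E[g(V(S_t))]\geq (1-2\eps)g(\mu_t)$, completes the proof.
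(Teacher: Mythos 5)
Your treatment of bounded customers is correct, and it is a genuinely nicer route than the paper's for part of the argument. The paper splits bounded customers into ``light'' (handled by a Jensen-type convexity argument on $w \mapsto 1/(1+w+w_t^{\lar})$ together with the variance bound from negative correlation) and ``medium/heavy bounded'' (handled by either the deterministic bound $V(S_t)\geq (1-\eps)\mu_t$ when the $D_0$-mass is negligible, or a rescaled Chernoff bound otherwise). Your Taylor-expansion estimate $\E[g(V(S_t))]\geq g(\mu_t)-\Var(V(S_t))$, combined with $\Var(V(S_t))\leq \eps^5\mu_t$, unifies the light case and all small-to-moderate $\mu_t$ in one stroke, and your large-$\mu_t$ split coincides with the paper's. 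The key structural observation you use --- that for a bounded customer each class $D_q$, $q\geq 1$, sits at its own vertex $(t,q)$ with \emph{integer} fractional degree $k_{q,t}$ and equal weights, so its contribution is deterministic by property \ref{it:P2} --- is exactly the paper's Claim \ref{cl:nonlightcust}.

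However, your unbounded-customer case has a genuine flaw. Your estimate (b) requires $\P\left[V(S_t)<(1-\eps)\mu_t\right]\leq\eps$, i.e., multiplicative concentration of $V(S_t)$ around its mean, and this is simply false in general for unbounded customers: at an unbounded customer's single vertex $t$, \emph{all} products, including those of weight up to $1/\eps$, are rounded randomly, and such a product can carry a constant fraction of $\mu_t$. Concretely, take a star class contributing weight $\approx 1/\eps$ with marginals equal to $1$, plus one product of weight $1/\eps$ and one of weight $\eps$, each with $x^*_{it}=1/2$; degree preservation forces exactly one of the two to be chosen, so with probability $1/2$ we get $V(S_t)\approx (2/3)\mu_t$, and no Chernoff bound (rescaled or split by weight class) can rescue $\P[V(S_t)<(1-\eps)\mu_t]\leq\eps$. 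The fix --- which is what the paper actually does in its Case 1 --- is to abandon concentration around $\mu_t$ and instead lower bound $V(S_t)$ by the star-class contribution alone: all products in $D_{q_t}$ have equal weight, so Lemma \ref{lem:chernoff} applies to the \emph{count} $\sum_{i\in D_{q_t}}X_{it}$, whose mean exceeds $1/\eps^6$, giving $V(S_t)\geq (1-\eps)/\eps$ with probability at least $1-\eps$. One then concludes not via $g((1-\eps)\mu_t)\geq(1-\eps)g(\mu_t)$ but via boundedness of $g$: since $g\bigl((1-\eps)/\eps\bigr)=1-\eps$ and $g(\mu_t)\leq 1$, we get $\E[g(V(S_t))]\geq(1-\eps)^2\geq(1-2\eps)\,g(\mu_t)$. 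This is a one-line repair, and your hedge about ``separating products by weight scale'' gestures toward it, but as written the reduction to your estimate (b) does not go through.
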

We can now conclude the proof of Theorem \ref{thm:feasibleoptimal}, by observing that
     \begin{align}\E\left[\sum_{t =1}^T\frac{V(S_t)}{1+V(S_t)}\right]&\geq (1-2\eps)\cdot\sum_{t =1}^T\frac{\sum_{i\in \Nc}v_ix_{it}^*}{1+\sum_{i\in \Nc}v_ix_{it}^*} \label{eq:1024}\\
     &\geq (1-2\eps)\cdot(1-\eps)\cdot \sum_{t=1}^T\frac{V(S_t^*)}{1+V(S_t^*)} \label{eq:2048}\\&\geq (1-3\eps)\cdot \sum_{t=1}^T\frac{V(S_t^*)}{1+V(S_t^*)}\notag\\
    &=(1-3\eps)\cdot \widehat{\opt}\notag,
\end{align}
where Inequality \eqref{eq:1024} is an application of Lemma \ref{lem:nearoptimal} and Inequality \eqref{eq:2048} follows from Lemma \ref{lem:LPopt}.



\section{Price of Visibility} \label{sec:price}
In this section, we investigate the impact of visibility constraints on the total expected revenue, comparing this model to the unconstrained setting where there are no visibility constraints. In Section \ref{subsection:loss}, we quantify the maximum-possible loss resulting from enforcing visibility constraints. In Section \ref{subsection:share}, we introduce a novel method to distribute the loss incurred among different products in proportion to their share of the overall loss.

\subsection{The Loss Due to Visibility Constraints}
\label{subsection:loss}
Consider the unconstrained assortment optimization problem \ref{Unconstrained problem}, which was defined in Section \ref{subsectin:lp} as
\begin{equation}
\begin{aligned}
\max_{S \subseteq \mathcal{N}} \quad  R(S),
\end{aligned}
\tag{\sf{AP}}
\end{equation}
and let $S^*$ be its optimal solution. As mentioned in Section \ref{sec:intro}, this basic version of the MNL model has a price-ordered optimal assortment (\cite{talluri2004revenue}). \black{We refer the reader to Appendix \ref{apx1} for further discussion of assortment optimization under MNL}.
In the absence of visibility constraints, it is clearly optimal to offer the assortment $S^*$ to all customers. Consequently, the total expected revenue in the unconstrained setting is simply $T\cdot  R(S^*)$.
In the following example, we show that enforcing visibility constraints can lead to an arbitrarily \black{large revenue gap in comparison with} the unconstrained setting.

\vspace{2mm}
\noindent{\bf Example.} Consider an \ref{APV} instance with two products and $T$ customers. The product prices are $p_1=1$ and $p_2=0$, their preference weights are $v_1=1$ and $v_2 = M$, and the visibility constraints are given by $\ell_1=0$ and $\ell_2=T$, where $M$ is a large constant. As mentioned earlier, \black{without visilibity constraints}, it \black{suffices} to evaluate the expected revenue of all price-ordered assortments and choose the one with the highest revenue to obtain an optimal assortment for \ref{Unconstrained problem}. Since 
$R(\{1\})=  \frac{1}{2} $, $R(\{1,2\})=  \frac{1}{2+M} < R(\{1\})$, and $R(\emptyset) = 0$, the optimal assortment is $S^*=\{1 \}$ with $R(S^*)=\frac{1}{2}$.

On the other hand, let $(S_1, S_2, \ldots, S_T)$ be a feasible solution for \ref{APV}. Since $\ell_2=T$, we have to include product $2$ in each of these assortments. Moreover, adding product $1$ to any assortment only increases our revenue since $R(\{1,2\})=  \frac{1}{2+M} > R(\{2\})=0$.
Therefore, it is optimal to offer product $1$ and $2$ in every assortment, and $S_t^*=\{1,2\}$ for all $t =1,\ldots,T$. 
As such, the ratio between the optimal expected revenue collected from $T$ customers in an unconstrained setting and the one collected when enforcing visibility constraints is
$$\frac{T \cdot R(S^*)}{\sum_{t=1}^T R(S_t^*)} = \frac{T \cdot  \frac{1}{2}   }{T \cdot\frac{1}{2+M}}= \frac{M}{2}+1,$$
which tends to infinity as $M$ increases.
In this example, we see that enforcing products with very low price and high weight can drive the expected revenue \black{to arbitrarily low values}.


\subsection{Sharing the Expected Loss}
\label{subsection:share}


In this section, we explore a scenario of a platform, where each product within our universe is associated with a specific vendor. These vendors can impose visibility constraints on their products, established through service-level agreements or product sponsorships. As observed in Section \ref{subsection:loss}, enforcing such constraints may result in decreasing the platform's revenue. To address this issue, the platform wishes to implement a fee structure based on the vendors' contributions to the overall revenue loss. 
Letting $S^*$ and $(S_1^*, S_2^*,\ldots, S_T^*)$ be optimal solutions to \ref{Unconstrained problem} and \ref{APV}, we denote the revenue loss due to visibility constraints by 
\begin{equation}
    \label{Delta loss}
    \Delta \coloneqq T \cdot R(S^*) - \sum_{t=1}^T R(S_t^*).
\end{equation}

\noindent
{\bf A naive approach.} One approach for allocating $\Delta$ is solely based on the parameters $\ell_i$, which represent the lower bounds on the products' imposed visibility. \black{Toward this objective, suppose that} the proportion assigned to the vendor of product $i$ would be determined by \black{$\ell_i/\sum_{j=1}^n \ell_j$}. This distribution is generally not equitable, in the sense that  we should not impose any charges on a product that already belongs to the optimal set $S^*$, even when $\ell_i > 0$, as each such product is already profitable for the platform, and would be offered to customers regardless of its visibility requirement. Moreover, this allocation rule fails to consider the impact of each product on the overall loss. For example, when a product has an exceptionally high preference weight $v_i$ but a significantly lower price $p_i$, while other products have higher prices and lower preference weights, enforcing visibility for the first product would drive our revenue down, whereas the others would have a lesser impact. In this scenario, the former product should in principle be responsible for covering almost the entire revenue loss.

\noindent
{\bf Our proposed approach.}
Let us first observe that \black{$R(S_t^*) = \sum_{i \in S_t^*} p_i v_i/(1 + \sum_{i \in S_t^*} v_i)$}, implying that for every customer $t\in [T]$
$$R(S_t^*) = \sum_{i \in S_t^*}  (p_i - R(S_t^*))\cdot v_i.$$
This decomposition tells us which products are driving our revenue down (those with $p_i < R(S_t^*)$) and which products are acting in the opposite direction ($p_i \geq R(S_t^*)$). It also shows that the contribution of  product $i$ to the expected revenue is proportional to the difference between the price of product $i$ and the actual revenue $R(S_t^*)$, as well as proportional to the preference weight $v_i$. We can now rewrite the total revenue of the assortments $S_1^*, S_2^*, \ldots, S_T^*$ as 
$$\sum_{t=1}^T R(S_t^*) = \sum_{t=1}^T \sum_{i \in S_t^*}  (p_i - R(S_t^*)) \cdot v_i = \sum_{i=1}^n \sum_{t=1}^T \mathbbm{1}(i \in S_t^*) \cdot (p_i - R(S_t^*))\cdot v_i.$$

Therefore, we view the collective contribution of each product $i \in \mathcal{N}$ to the total revenue over all customers as $$\cont{i}\coloneqq\sum_{t=1}^T \mathbbm{1}(i \in S_t^*) \cdot (p_i - R(S_t^*)) \cdot v_i.$$


\noindent
{\bf Pricing the expected loss.}
 For each product $i \in \mathcal{N}$, we propose to charge its vendor the fraction of the loss corresponding to the negative contribution of this product, divided by the sum of all negative contributions, namely,

\begin{equation} \label{pricing formula}
  \Gamma_i \coloneqq \frac{\cont{i}^-}{\sum_{j=1}^n \cont{j}^-} \cdot \Delta,
\end{equation}
where $x^- = -\min(x, 0)$ is the negative part of \black{$x$}, and $\Delta$ is the total loss in revenue due to enforcing visibility constraints, \black{given by Equation~\eqref{Delta loss}}. Next, we discuss three important properties of this strategy:
\paragraph{(a) Fair distribution.} First, it is worth noting that the revenue loss $\Delta$ is exactly shared between the products whose contribution is negative. Indeed, on the one hand $\sum_{i \in \mathcal{N}} \Gamma_i = \Delta$, and on the other hand  $\Gamma_i > 0$ if and only if $C_i^->0$, i.e., the product's contribution to the revenue is negative. Moreover, the fee $\Gamma_i$ for each product takes into account its actual impact. The first observation is that all products in $S^*$ have nonnegative contributions, and their vendors are consequently exempt from a fee, as expected. The second observation is that for any product with a negative contribution, its revenue impact is amplified by a lower price and a higher preference weight. It turns out that our policy \black{accounts for this observation}, as the lower the price of such a product, the greater revenue loss it causes, and thus the higher the fee it imposes on the vendor. Similarly, the greater the preference weight of an unprofitable product (i.e., one with $p_i< R(S^*)$) the greater its fee. In the following, we say that a product $i\in \Nc$ is unprofitable with respect to an assortment $S\subseteq \Nc$ if $p_i< R(S)$. The third observation is that even if a product $i\in \Nc$ is unprofitable with respect to the optimal unconstrained assortment $S^*$, it can still have a nonnegative contribution, in which case $\Gamma_i =0$, concurring with a product whose overall impact is nonnegative. In fact, the same holds even if a product is unprofitable with respect to some assortment $S_t^*$ for $t\in [T]$, as its overall contribution may potentially be nonnegative. The final observation is that this pricing strategy guarantees a fair treatment of identical products. Indeed, if two distinct products $i$ and $j$ have identical prices and preference weights, then imposing similar visibility constraints implies a similar fee for the two vendors.
\paragraph{(b) Monotonicity.}                            
For every product $i$, the fee $\Gamma_i$ is nondecreasing as a function of $\ell_i$. In other words, if a vendor wishes to display her product more often to customers, the resulting fee becomes higher. \black{Formally, let us fix a product $i\in \Nc$, and consider the instance with $T$ customer, $\Tilde \ell_i = \ell_i+1$ and $\Tilde \ell_j = \ell_j$ for $j\neq i$, which corresponds to increasing the visibility requirement $\ell_i$ of product $i$ by $1$, while the requirement for other products remains unchanged. Let $\Tilde \Gamma_i$ be the fee imposed on product $i$ in the latter instance. Then, we have the following claim, whose proof is included in Appendix~\ref{apx:increasefee}.
\begin{claim}\label{cl:increasefee}
    $\Tilde \Gamma_i \geq \Gamma_i$.
\end{claim}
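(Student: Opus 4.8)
The plan is to show that increasing the visibility requirement of product $i$ from $\ell_i$ to $\Tilde\ell_i=\ell_i+1$ can only increase the fee $\Gamma_i$ charged to its vendor. Recall from Equation~\eqref{pricing formula} that $\Gamma_i = (\cont{i}^-/\sum_{j}\cont{j}^-)\cdot\Delta$, so the fee is the product of product $i$'s share of the total negative contribution and the overall loss $\Delta$. The natural strategy is to argue that \emph{both} factors behave monotonically under the perturbation, or at least that the combined quantity does. The key simplification I would exploit is the identity $\Delta = T\cdot R(S^*) - \sum_{t=1}^T R(S_t^*)$ together with the contribution decomposition $\sum_{t=1}^T R(S_t^*) = \sum_{j=1}^n \cont{j}$. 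Since the unconstrained benchmark $T\cdot R(S^*)$ does not depend on the visibility parameters, we have $\Delta = T\cdot R(S^*) - \sum_j \cont{j}$, and splitting $\cont{j} = \cont{j}^+ - \cont{j}^-$ lets us write $\Delta = T\cdot R(S^*) - \sum_j \cont{j}^+ + \sum_j\cont{j}^-$.

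First I would pin down how the optimal solution changes when $\ell_i$ increases by one. By Theorem~\ref{Solution structure}, the optimal assortments are the nested sets $S_t^* = \overline{\bigcup_{t\le u\le T} L_u}$, determined entirely by the partition $(L_t)$. Increasing $\ell_i$ by one moves product $i$ from class $L_{\ell_i}$ to class $L_{\ell_i+1}$, which enlarges exactly one of the unions $\bigcup_{t\le u\le T}L_u$ (namely for $t=\ell_i+1$) by forcing $i$ to appear in one additional assortment. By monotonicity (Lemma~\ref{monotonicity}) the new assortments $\Tilde S_t^*$ satisfy $\Tilde S_t^* \supseteq S_t^*$ for the affected index and are unchanged elsewhere, so the perturbation is highly local: product $i$ is now displayed to one more customer, and the only revenue terms that can change are those of the assortments into which $i$ (and possibly a few induced products) are inserted.

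The heart of the argument is then to track the effect of this local change on the three ingredients $\Delta$, $\cont{i}^-$, and $\sum_j\cont{j}^-$. Since $T\cdot R(S^*)$ is fixed and forcing an additional display of $i$ can only weakly decrease the constrained revenue $\sum_t R(S_t^*)$ (the new solution is feasible for the old, more permissive instance), we get $\Tilde\Delta \ge \Delta$. For the share factor, I would show that product $i$'s own negative contribution weakly increases while relating it to the denominator. Concretely, I expect to prove $\Tilde{\cont{i}}^- \ge \cont{i}^-$ because $i$ being shown to an additional customer adds a further nonnegative term $(R(\Tilde S^*_{\ell_i+1})-p_i)^+\cdot v_i$ to its negative contribution, and to control the denominator by checking that the net increase in the total negative contribution is attributable to $i$ and to the loss. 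The cleanest route is to directly compare $\Tilde\Gamma_i$ and $\Gamma_i$ by substituting $\Tilde\Delta = T R(S^*) - \sum_t R(\Tilde S_t^*)$ and using $\sum_j \cont{j} = \sum_t R(S_t^*)$ to cancel common terms, reducing the inequality $\Tilde\Gamma_i\ge\Gamma_i$ to a statement purely about the negative contributions, which one verifies from the local structure described above.

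The main obstacle I anticipate is the denominator $\sum_j\cont{j}^-$: unlike the numerator and $\Delta$, it is not obviously monotone, because enlarging an assortment raises its revenue $R(S_t^*)$, which in turn can flip the sign of $(p_j - R(S_t^*))$ for \emph{other} products $j$ already present, thereby changing their negative contributions in either direction. Handling this requires a careful accounting that ties the change in $\sum_j\cont{j}^-$ back to the change in $\Delta$ — essentially showing that any increase in competitors' negative contributions is matched by a commensurate increase in $\Delta$, so that the ratio times $\Delta$ still moves the right way. I would therefore structure the proof to avoid bounding the denominator in isolation, instead manipulating the product $(\cont{i}^-/\sum_j\cont{j}^-)\cdot\Delta$ as a whole via the identity $\Delta = \sum_j\cont{j}^- + (T R(S^*) - \sum_j\cont{j}^+)$, so that the potentially troublesome denominator partially cancels and the remaining terms are manifestly monotone.
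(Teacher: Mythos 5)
Your outline gets the local structure right (only customer $(\ell_i+1)$'s assortment changes, $\Tilde\Delta \geq \Delta$, and $\Tilde C_i^- \geq C_i^-$), but it breaks down exactly at the point you flag as the ``main obstacle,'' and the breakdown stems from a sign error. You write that enlarging an assortment \emph{raises} its revenue, so that other products' negative contributions might increase and the denominator $\sum_j C_j^-$ becomes hard to control. The opposite is true here: by Lemma~\ref{monotonicity}, the expanded revenue is non-increasing in the forced set, so forcing $i$ into customer $(\ell_i+1)$'s assortment gives $R(\Tilde S_{\ell_i+1}) \leq R(S^*_{\ell_i+1})$ --- this drop is precisely what creates the extra loss $\Tilde\Delta - \Delta \geq 0$. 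Consequently, for every $j \neq i$ already present, the term $(p_j - R(\Tilde S_{\ell_i+1}))\cdot v_j$ weakly exceeds $(p_j - R(S^*_{\ell_i+1}))\cdot v_j$, and any product newly entering $\Tilde S_{\ell_i+1}$ does so only if $p_j \geq R(\Tilde S_{\ell_i+1})$ (Lemma~\ref{Compute expanded set}), contributing a nonnegative term. Hence $\Tilde C_j \geq C_j$, i.e., $\Tilde C_j^- \leq C_j^-$ for all $j \neq i$: the competitors' terms in the denominator move in the \emph{favorable} direction, and the obstacle you are designing around does not exist. This is the decisive fact your proposal is missing, and indeed asserts backwards.

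With it in hand, the paper closes the proof in two elementary steps: first
\begin{equation*}
\Tilde \Gamma_i = \frac{\Tilde C_i^-}{\Tilde C_i^- + \sum_{j\neq i}\Tilde C_j^-}\cdot \Tilde \Delta \;\geq\; \frac{\Tilde C_i^-}{\Tilde C_i^- + \sum_{j\neq i} C_j^-}\cdot \Delta,
\end{equation*}
using $\Tilde C_j^- \leq C_j^-$ and $\Tilde\Delta \geq \Delta$, and then this is at least $\Gamma_i$ because $\Tilde C_i^- \geq C_i^-$ and $x\mapsto x/(c+x)$ is nondecreasing (the trivial case $i\in S^*_{\ell_i+1}$, where nothing changes, is handled separately). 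Your proposed workaround --- rewriting $\Delta = \sum_j C_j^- + \bigl(T\, R(S^*) - \sum_j C_j^+\bigr)$ so that the denominator ``partially cancels'' --- is never carried out, and it cannot deliver what you claim: the residual factor equals $\Delta - \sum_j C_j^-$, which is neither sign-definite nor monotone under the perturbation (in the two-product example of Section~\ref{subsection:loss} it is strictly negative), so the remaining terms are not ``manifestly monotone,'' and substituting the identity back merely reproduces the original ratio. As written, the proposal is therefore a plan with the key step missing, premised on the wrong monotonicity of $R(\Tilde S_{\ell_i+1})$, and does not constitute a proof.
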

}
\paragraph{(c) Computational tractability.} The fees charged are easy to compute: Each $\Gamma_i$ can be computed in ${O}(nT)$ time, since it only requires solving \ref{APV}. Additionally, consider the situation where a vendor is interested in knowing the fee that she will be paying in order to increase the visibility of her product by one unit, corresponding to increasing $\ell_i$ to $\ell_i + 1$, without changing any other ${\ell}_j$.
In this case, we only have to recompute $S_{\ell_i + 1}^*$, since all others assortments $S_t^*$ remain the same. Therefore, for a fixed product $i$, we can efficiently compute the fee $\Gamma_i$ for all $\ell_i \in [T]\cup\{0\}$.
 
   
\black{\paragraph{Additional discussions.} Here, we discuss the sensitivity of the fee with respect to problem primitives, mainly the preference weights. Since the MNL choice model is a stylized model for customer choice behavior, an estimation step is applied prior to making assortment decisions, in order to determine estimates for the model's preference weights using methods such as maximum likelihood estimation or Bayesian methods; see Chapter~8 in \cite{train2009discrete} for more detail on numerical estimation procedures in choice modelling. Such estimates are by assumption, approximations of some underlying real parameters, and are inherently noisy. This noise in estimation can have an impact on the fees given to customers. Indeed, let us take a simple illustrating argument. Consider an instance with a single customer, i.e., $T=1$, and two products $1$ and $2$, with their respective prices $p_1 = 2$ and $p_2 = 1$, and their visibility requirements $\ell_1 = 0$ and $\ell_2 = 1$. According to some prior estimation procedure, the preference weights of both products are $v_1 = v_2 = 1$. Abstracting about the technical details, applying our approach to the above instance would show that $\{1,2\}$ is an optimal solution, and that the expected loss is $\Delta = 0$. Consequently, both products enjoy a fee of $0$. Now assume that due to estimation error, the true underlying parameter is $\Tilde v_1 = 1+\eps$ for some estimation error $\eps>0$, and the parameter $v_2$ was correctly estimated. Therefore, the true expected loss is $$\Tilde \Delta = \frac{\eps}{(2+\eps)(3+\eps)},$$
which is entirely incured by product $2$, that is $\Gamma_2 = \Tilde \Delta$. This shows the existence of instances where $\eps$-bounded variations on the preference weights can turn a null fee into a positive one.

It is noteworthy that while our approach outlines an explicit cost-sharing scheme, it is much richer as it quantifies the so-called contribution $C_i$ of any product $i\in \Nc$. This allows platforms to design specific cost-sharing schemes, tailored to their estimation procedures, business needs and their bottom line. For example, recalling the above example, when the platform's estimation procedure allows for an $\eps$-bounded noise, the preference weights can for instance be randomly sampled from a distribution over confidence intervals for preference weights.
}

    \section{Concluding Remarks}
    \label{sec:conclusions}
    


\black{We conclude this paper by presenting a number of intriguing questions and promising avenues for future research.}

{\em Algorithms for \ref{APVC} with general prices}: While we designed an approximation scheme for \ref{APVC} with identical product prices, developing non-trivial algorithms for general prices appears to be a challenging research direction. Extending our algorithmic approach to \ref{APVC} with general prices will likely require new ideas, as directly applying our current techniques does not appear to yield a PTAS. Specifically, discretizing and guessing techniques similar to those of Section \ref{subsec:linearsol} would need additional refinement to effectively handle this broader setting.

{\em Generalization to arbitrary choice models}: Investigating assortment optimization with visibility constraints under alternative choice models, such as Nested Logit, Markov Chain, or ranking-based choice models, represents another interesting area for exploration. Our current algorithms are heavily reliant on structural properties of the MNL choice model. For instance, our approach for computing the expanded set leverages the concept of revenue-ordered assortments, which does not easily generalize to other choice models. Additionally, the supermodularity of the expanded revenue is based on the formulation of the MNL choice probabilities, making its applicability to a general choice model an intriguing question for future research.

{
\addcontentsline{toc}{section}{Bibliography}
\bibliographystyle{plainnat}
\bibliography{BIB-Visibility}
}

\appendix
\changelocaltocdepth{1}

    \section{MNL-based Revenue Maximization}
    \label{apx1}
For completeness, we present two well-known results in the context of unconstrained revenue maximization problem under the MNL model, defined as follows
\begin{equation}
\label{Unconstrained}\tag{\sf{AP}}
\max_{S \subseteq \mathcal{N}} \quad  R(S).
\end{equation} 
Lemma \ref{Revenue variations} provides necessary and sufficient conditions under which adding a product to a given assortment increases its expected revenue. Then, Lemma \ref{lem:nestedrevenue}, shows that the optimal assortment for \ref{Unconstrained} is price-ordered (see, e.g., \citet[Prop.~6]{talluri2004revenue}).

The next lemma shows that adding a product $j$ to any given assortment $S$ weakly increases its expected revenue if and only if the price of this product $p_j$ is at least $R(S)$, or equivalently, at least $R(S\cup\{j\})$.

\begin{lemma}\label{Revenue variations}
    For any assortment $S\subseteq {\cal N}$ and any product $j \in {\cal N} \setminus S$, the next three conditions are equivalent:
$$(i) \; R(S \cup \{j\}) \geq R(S),  \qquad (ii)  \; p_j \geq R(S),  \qquad (iii) \; p_j \geq R(S\cup \{j\}). $$
\end{lemma}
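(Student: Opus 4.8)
The plan is to prove the equivalence of the three conditions by direct algebraic manipulation of the MNL revenue function, establishing a cyclic chain of implications. I would first write out explicitly the revenue of the augmented assortment in terms of the revenue of $S$. Recall that $R(S) = \frac{\sum_{i\in S}p_iv_i}{1+V(S)}$, so the numerator satisfies $\sum_{i\in S}p_iv_i = R(S)\cdot(1+V(S))$. Adding product $j$ gives
\begin{equation*}
    R(S\cup\{j\}) = \frac{\sum_{i\in S}p_iv_i + p_jv_j}{1+V(S)+v_j} = \frac{R(S)\cdot(1+V(S)) + p_jv_j}{1+V(S)+v_j}.
\end{equation*}

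The core of the argument is a single clean computation of the difference $R(S\cup\{j\}) - R(S)$. Putting the two terms over the common denominator $1+V(S)+v_j$, the numerator becomes $R(S)(1+V(S)) + p_jv_j - R(S)(1+V(S)+v_j) = v_j\cdot(p_j - R(S))$. Hence
\begin{equation*}
    R(S\cup\{j\}) - R(S) = \frac{v_j\cdot\bigl(p_j - R(S)\bigr)}{1+V(S)+v_j}.
\end{equation*}
Since $v_j>0$ and the denominator is strictly positive, the sign of $R(S\cup\{j\}) - R(S)$ equals the sign of $p_j - R(S)$. This immediately delivers the equivalence $(i)\Leftrightarrow(ii)$: the revenue weakly increases precisely when $p_j\geq R(S)$.

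For the equivalence with $(iii)$, I would note that $R(S\cup\{j\})$ is a weighted average of $R(S)$ and $p_j$ in a precise sense — or more simply, observe directly that $p_j \geq R(S) \iff p_j \geq R(S\cup\{j\})$ follows from the fact that $R(S\cup\{j\})$ lies weakly between $R(S)$ and $p_j$. Concretely, from the displayed difference, $R(S\cup\{j\})\geq R(S)$ iff $p_j\geq R(S)$, and symmetrically one checks that $p_j - R(S\cup\{j\})$ has the same sign as $p_j - R(S)$: writing $p_j - R(S\cup\{j\}) = \frac{(1+V(S))\cdot(p_j - R(S))}{1+V(S)+v_j}$ via the same common-denominator computation shows the two quantities are positive multiples of each other. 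Thus $(ii)\Leftrightarrow(iii)$, closing the loop.

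I do not anticipate a genuine obstacle here, as the statement is elementary; the only point requiring care is the bookkeeping of the algebra, specifically ensuring that each difference is expressed as a \emph{strictly positive} multiple of $p_j - R(S)$ so that weak inequalities transfer faithfully in both directions. The cleanest presentation is to derive both closed-form differences $R(S\cup\{j\})-R(S)$ and $p_j - R(S\cup\{j\})$ once, observe that both share the sign of $p_j - R(S)$, and read off all three equivalences simultaneously rather than chasing $(i)\Rightarrow(ii)\Rightarrow(iii)\Rightarrow(i)$ separately.
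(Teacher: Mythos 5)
Your proof is correct and is essentially the same argument as the paper's: the paper writes $R(S\cup\{j\}) = \alpha\cdot R(S) + (1-\alpha)\cdot p_j$ with $\alpha = \frac{1+V(S)}{1+V(S\cup\{j\})}$ and concludes that $R(S\cup\{j\})$ lies in the interval between $R(S)$ and $p_j$, and your two difference identities $R(S\cup\{j\})-R(S) = (1-\alpha)\cdot(p_j-R(S))$ and $p_j-R(S\cup\{j\}) = \alpha\cdot(p_j-R(S))$ are exactly that convex-combination identity unpacked. Both arguments hinge on the same computation, so your proposal matches the paper's proof.
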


\begin{lemma}\label{lem:nestedrevenue}
    \label{Unconstrained solution}
    Let $R^* = \max_{S \subseteq \mathcal{N}} R(S)$. Then, the price-ordered assortment $S^* = \{i \in \mathcal{N}, p_i \geq R^* \}$ is optimal.
\end{lemma}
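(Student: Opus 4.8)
The plan is to leverage the necessary and sufficient condition of Lemma \ref{Revenue variations}, which characterizes exactly when adding a product to an assortment weakly increases its revenue. Let $S'$ be any optimal assortment, so that $R(S') = R^*$. I would argue in two directions: first that $S' \subseteq S^*$, and then that augmenting $S'$ up to $S^*$ preserves the optimal revenue, which together yield $R(S^*) = R^*$.

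First I would show that every product belonging to an optimal assortment has price at least $R^*$, i.e. $S' \subseteq S^*$. Fix a product $i \in S'$ and apply Lemma \ref{Revenue variations} with $S = S' \setminus \{i\}$ and $j = i$, so that $S \cup \{j\} = S'$. The equivalence of conditions $(i)$ and $(iii)$ in that lemma gives $R(S') \geq R(S' \setminus \{i\})$ if and only if $p_i \geq R(S') = R^*$. Since $S'$ is optimal, the inequality $R(S') \geq R(S' \setminus \{i\})$ certainly holds, which forces $p_i \geq R^*$. As $i$ was an arbitrary element of $S'$, we conclude $S' \subseteq \{i \in \mathcal{N} : p_i \geq R^*\} = S^*$.

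Second, I would add the remaining products of $S^* \setminus S'$ to $S'$ one at a time and show that the revenue never moves away from $R^*$. Ordering these products arbitrarily, I consider the sequence of intermediate assortments obtained by inserting them into $S'$ successively, and maintain the invariant that the current assortment $S$ always satisfies $R(S) = R^*$. For the next product $i$ to be inserted we have $p_i \geq R^* = R(S)$, so the equivalence of $(i)$ and $(ii)$ in Lemma \ref{Revenue variations} yields $R(S \cup \{i\}) \geq R(S) = R^*$; combined with the global maximality $R(S \cup \{i\}) \leq R^*$, this pins down $R(S \cup \{i\}) = R^*$ and preserves the invariant. By induction over all products of $S^* \setminus S'$, I obtain $R(S^*) = R^*$, so $S^*$ is optimal.

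The steps are all short, and the only point requiring care is the inductive maintenance of the invariant $R(S) = R^*$ during augmentation: one must use both the lower bound supplied by Lemma \ref{Revenue variations} \emph{and} the global maximality of $R^*$ to pin the revenue exactly at $R^*$, rather than concluding a mere weak increase. I do not expect a genuine obstacle here, as the monotonicity characterization of Lemma \ref{Revenue variations} does essentially all the work; the subtlety is simply that it is applied to a moving assortment whose revenue stays constant at $R^*$ throughout the process.
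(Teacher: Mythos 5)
Your proof is correct and rests on essentially the same foundation as the paper's: both arguments hinge entirely on the equivalences in Lemma \ref{Revenue variations}, used once to show that every product in an optimal assortment has price at least $R^*$, and once to show that adding products with $p_i \geq R^*$ cannot decrease revenue. The only (cosmetic) difference is in packaging: the paper fixes a maximum-cardinality optimal assortment $\tilde S$ and derives contradictions to conclude the set equality $S^* = \tilde S$, whereas you start from an arbitrary optimal assortment, show it is contained in $S^*$, and then augment it product-by-product, using global maximality to keep the revenue pinned at $R^*$ throughout the induction.
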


    
\begin{proof}[Proof of Lemma \ref{Revenue variations}]
    We show that $R(S\cup \{j\})$ is a convex combination of $R(S)$ and $p_j$. To this end, note that
    \begin{align*}
        R\left(S\cup \{j\}\right) &= \sum_{i\in S} p_i\cdot\phi\left(i,S\cup \{j\}\right) + p_j\cdot\phi\left(j,S\cup \{j\}\right)
     = \alpha \cdot R(S)  + (1-\alpha)\cdot p_j,
    \end{align*}
    where $\alpha = \frac{1+V(S)}{1+V(S\cup\{j\})}$. It is easy to verify that $\alpha \in (0,1]$,  which proves that $R(S\cup \{j\})$ is a convex combination of $R(S)$ and $p_j$. Consequently, $R(S\cup \{j\})$ belongs to the closed interval whose endpoints are $R(S)$ and $p_j$. In particular, we have $R(S\cup \{j\}) \geq R(S)$ if and only if $p_j \geq R(S)$, and if and only if $p_j \geq R(S\cup \{j\})$.
\end{proof}

\begin{proof}[Proof of Lemma \ref{lem:nestedrevenue}]
Let $\Tilde{S}$ be an optimal assortment of \ref{Unconstrained} with maximal cardinality. In the following, we show that $S^* = \Tilde S$.
We begin by arguing that $S^*\subseteq \Tilde S$. Consider some product $i\in S^*$, and assume by contradiction that $i\notin \Tilde S$. On the one hand, we know by optimality of $\Tilde S$ that $R(\Tilde S) = R^*$. On the other hand, $p_i\geq R^*$ by definition of $S^*$. Therefore, $p_i\geq R(\Tilde S)$ and we can apply \black{Lemma} \ref{Revenue variations}, which implies that $R(\Tilde{S}\cup \{i\})\geq R(\Tilde S)$. Hence, $\Tilde{S}\cup \{i\}$ is also an optimal assortment, contradicting the definition of $\Tilde{S}$ as an optimal assortment with maximal cardinality.
Now, to show that $\Tilde S\subseteq S^*$, suppose there exists a product $i\in \Tilde S$ that does not belong to $S^*$, i.e., $p_i < R^* = R(\Tilde S)$. By Lemma \ref{Revenue variations}, the latter inequality implies that $R(\Tilde S\setminus \{i\}) > R(\Tilde S)$, which contradicts the optimality of $\Tilde S$.
\end{proof}

\section{\black{Proof of Claim \ref{cl:computation}}}\label{apx:computation}
\black{Let $S_1, S_2\subseteq \Nc$ with $S_1\subseteq S_2$. Then, \begin{align*}
    R(S_2) &= \sum_{j\in S_2} \frac{p_j\cdot v_j}{1+V(S_2)} \\
    & = \sum_{j\in S_1} \frac{p_j\cdot v_j}{1+V(S_2)}  + \sum_{j\in S_2\setminus S_1} \frac{p_j\cdot v_j}{1+V(S_2)}\\
    & = R(S_1) \cdot \frac{1+V(S_1)}{1+V(S_2)} + \sum_{j\in S_2\setminus S_1} \frac{p_j\cdot v_j}{1+V(S_2)}
\end{align*}
Therefore,
\begin{align*}
    R(S_1) - R(S_2) &= R(S_1) \cdot \left(1 - \frac{1+V(S_1)}{1+V(S_2)}\right) - \sum_{j\in S_2\setminus S_1} \frac{p_j\cdot v_j}{1+V(S_2)}\\
    & = R(S_1) \cdot  \frac{\sum_{j\in S_2\setminus S_1}v_j}{1+V(S_2)} - \sum_{j\in S_2\setminus S_1} \frac{p_j\cdot v_j}{1+V(S_2)}\\
    & = \frac{1}{1+V(S_2)}\cdot \sum_{j\in S_2\setminus S_1}\left(R(S_1)  - p_j\right)\cdot v_j
\end{align*}
}

\section{Additional Proofs from Section \ref{sec:APVC}}\label{apx:APVC}
\subsection{Proof for Theorem \ref{NP-hardness}}\label{apxhard}
In order to prove that \ref{APVC} is strongly NP-hard, we reduce the $3$-\texttt{PARTITION} problem to \ref{APVC}. \black{In this setting}, the input consists of $3T$ positive integers ${\cal A} = \{a_1,\ldots, a_{3T}\}$ with $\sum_{i=1}^{3T}a_i=BT$, for some $B>0$. We are asked to determine whether $\cal A$ can be partitioned into $T$ triplets $A_1,\ldots, A_T$, such that the sum of each triplet is exactly $B$. This problem is known to be strongly NP-hard (see, e.g., \citet[Theorem~3.5]{garey1975complexity})
\paragraph{Instance construction.} Given a $3$-\texttt{PARTITION} instance of the form described above, we construct the following \ref{APVC} instance. We consider \black{$T$ customers and} $3T$ products, comprising the universe $\Nc = \{1,\ldots,3T\}$.  Each product $i\in \Nc$, has a preference weight $v_i = a_i$, a price $p_i = 1$, and its visibility requirement is $\ell_i = 1$. In particular, note that $V(\Nc) = \sum_{i=1}^{3T} a_i= BT$. Finally, the upper bound on the cardinality of any offered assortment is $k=3$.
\paragraph{Analysis.} An important observation is that any feasible solution for this instance offers each product to exactly one customer. In other words, if $S_1,\ldots ,S_T$ is a feasible solution, then for any product $i\in \Nc$, there exists a unique customer $t\in [T]$ for which $i\in S_t$. Indeed, on the one hand, the visibility constraints of $\ell_1=\ell_2= \ldots= \ell_{3T} =1$ impose that each product will be shown at least once, and on the other hand, since each assortment can contain at most $k=3$ products and there are $3T$ products in our universe, each product can be offered at most once. As a consequence, for any feasible solution $S_1,\ldots, S_T$, we have $\sum_{t=1}^TV(S_t) = V(\Nc) = BT$.

Now, let us denote by $S^* = (S^*_1,\ldots,S^*_T)$ an optimal collection of assortments for our \ref{APVC} instance. The following claim shows that a valid partition exists for the original $3$-\texttt{PARTITION} instance if and only if the expected revenue of $S^*$ exceeds a certain threshold. A close inspection of this result reveals that it directly implies the strong NP-hardness of \ref{APVC}, even with equal prices and integer-valued preference-weights.
\begin{lemma}\label{lem:NPhard}
    ${\cal A} = \{a_1,\ldots, a_{3T}\}$ is a YES-instance of $3$-\texttt{PARTITION} if and only if $\sum_{t=1}^TR(S_t^*) \geq T\cdot \frac{B}{1+B}.$
\end{lemma}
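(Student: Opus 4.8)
The plan is to translate the revenue objective into a purely convex-analytic statement and then exploit strict convexity of the map $x\mapsto 1/(1+x)$. I would start from the structural observation already recorded just before the lemma: since $\ell_i=1$ for every product and $k=3$, every feasible collection $S_1,\ldots,S_T$ displays each of the $3T$ products exactly once, so $(S_1,\ldots,S_T)$ is precisely a partition of $\mathcal{N}$ into $T$ triplets, and in particular $\sum_{t=1}^T V(S_t)=V(\mathcal{N})=BT$. Because all prices equal $1$, the per-customer revenue collapses to $R(S_t)=V(S_t)/(1+V(S_t))=1-1/(1+V(S_t))$, so $\sum_{t=1}^T R(S_t)=T-\sum_{t=1}^T 1/(1+V(S_t))$. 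Maximizing total revenue over feasible solutions is therefore equivalent to minimizing $\sum_{t=1}^T 1/(1+V(S_t))$ over triplet partitions whose total weight is fixed at $BT$.

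The second step is to apply Jensen's inequality to the strictly convex function $f(x)=1/(1+x)$. Since $\sum_{t=1}^T V(S_t)=BT$ is a fixed quantity, Jensen gives $\sum_{t=1}^T f(V(S_t))\geq T\,f(B)=T/(1+B)$, with equality if and only if $V(S_t)=B$ for all $t$. Substituting back, this yields the uniform upper bound $\sum_{t=1}^T R(S_t)\leq T\cdot B/(1+B)$ valid for \emph{every} feasible solution, and the equality case holds exactly when all triplet sums equal $B$ — which is precisely the statement that the underlying collection is a valid $3$-partition.

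The two directions of the equivalence then fall out. For the forward direction, if $\mathcal{A}$ is a YES-instance, a valid $3$-partition furnishes a feasible collection with every $V(S_t)=B$, attaining the threshold $T\cdot B/(1+B)$; optimality of $S^*$ then forces $\sum_{t=1}^T R(S_t^*)\geq T\cdot B/(1+B)$. For the converse, if $\mathcal{A}$ is a NO-instance, no feasible triplet partition can have all sums equal to $B$, so by the \emph{strictness} of Jensen's inequality every feasible solution, and in particular the optimal $S^*$, satisfies $\sum_{t=1}^T R(S_t^*)<T\cdot B/(1+B)$; contraposition delivers the claim.

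The only genuinely delicate point is the strictness used in the converse: I must guarantee that the optimum cannot reach the threshold unless the equal-weight configuration $V(S_t)=B$ is actually realizable, which is exactly where strict convexity of $f$ and the tightness of the equality case in Jensen's inequality are essential. Everything else is routine bookkeeping — the reduction of $R(S_t)$ to $V(S_t)/(1+V(S_t))$ under unit prices, the identification of feasible solutions with triplet partitions, and the identity $\sum_{t=1}^T V(S_t)=BT$ are all immediate from the instance construction.
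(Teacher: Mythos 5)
Your proof is correct, and while its forward direction coincides with the paper's (a valid $3$-partition yields a feasible solution attaining $T\cdot\frac{B}{1+B}$, and both rest on the structural fact that feasible solutions are exactly triplet partitions with $\sum_{t=1}^T V(S_t)=BT$), your converse takes a genuinely different route. You invoke the equality case of Jensen's inequality for the strictly convex map $x\mapsto 1/(1+x)$: in a NO-instance no feasible solution has all $V(S_t)=B$, so every feasible solution lies strictly below the threshold. The paper instead applies Jensen to the concave map $x\mapsto x/(1+x)$ to bound a relaxed integer program, and then crucially exploits integrality of the weights: in a NO-instance some $V(S_u)\geq B+1$ and some $V(S_v)\leq B-1$, and an explicit swap argument (replacing $V(S_u)$ by $V(S_u)-1$ and $V(S_v)$ by $V(S_v)+1$) shows the optimum falls short of the threshold by at least $\frac{2}{B(B+1)(B+2)}$. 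Your qualitative argument is cleaner and fully suffices for the lemma as stated, hence for strong NP-hardness, since an exact algorithm could compare the rational optimum to the rational threshold. What it does not deliver is the quantitative gap, and that gap is not a stylistic flourish: the paper's subsequent ruling-out of an FPTAS in Theorem \ref{NP-hardness} chooses $\eps = \frac{1}{TB^2(B+2)}$ precisely so that a $(1-\eps)$-approximation can separate YES- from NO-instances, which requires knowing the NO-side optimum is at least $\frac{2}{B(B+1)(B+2)}$ below the threshold. So if your proof replaced the paper's, the inapproximability half of the theorem would lose its foundation and the gap bound would have to be re-derived, most naturally by exactly the integer perturbation the paper uses.
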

\paragraph{First direction: YES-instance $\boldsymbol{\implies \sum_{t=1}^TR(S_t^*) \geq T\cdot \frac{B}{1+B}}$.}
Suppose that $A_1,\ldots, A_T$ is a valid partition, and let $S_t = \{i\,\colon\,a_i\in A_t\}$. Then, $$
    \sum_{t=1}^TR(S_t^*) \geq \sum_{t=1}^TR(S_t)  = T\cdot \frac{B}{1+B}. 
$$
\paragraph{Second direction: NO-instance $\boldsymbol{\implies \sum_{t=1}^TR(S_t^*) \leq T\cdot \frac{B}{1+B} - \frac{2}{B(B+1)(B+2)}}$.}
Consider the following integer program:
    \begin{equation}\tag{IP}\label{eq:integerprogram}
        \begin{aligned}
         \max_{\mathbf{b} \in \N^T}  & \; \;      \sum_{t=1}^T  \frac{b_t}{1+b_t}   \\  
          {\rm s.t.} \;\;   & \;\;  \sum_{t=1}^Tb_t = BT.
        \end{aligned}
    \end{equation}
For any feasible solution $(b_1,\ldots,b_T)$, we have{\black{\begin{align}
    \sum_{t=1}^T \frac{b_t}{1+b_t}& = T\cdot \sum_{t=1}^T \frac1T\cdot \frac{b_t}{1+b_t} \notag\\ & \leq T\cdot \frac{\frac{1}{T}\cdot\sum_{t=1}^T{b_t}}{1+ \frac{1}{T}\cdot\sum_{t=1}^T{b_t}} \label{eq:15}\\ &= T\cdot \frac{B}{1+B}.\label{eq:bound}
\end{align}}}\\
\black{Inequality~\eqref{eq:15}} follows from Jensen's inequality and the concavity of $x\mapsto x/(1+x)$, and \black{Equality~\eqref{eq:bound}} is a consequence of $\sum_{t=1}^Tb_t = BT$. 
In other words, $\opt({\rm \ref{eq:integerprogram}}) \leq T\cdot B/(1+B)$.
    
Now consider any feasible sequence of assortments $(S_1,\ldots, S_T)$. Since ${\cal A}$ is a NO-instance, there exists \black{at least one pair of customers} $u,v\in [T]$ for which $V(S_u) \geq B+1$ and $V(S_v)\leq B-1$. We assume without loss of generality that $u=1$ and $v=2$. Consider the feasible solution $\mathbf b$ for \ref{eq:integerprogram} defined as follows: $b_1 = V(S_1)-1$, $b_2 = V(S_2) +1$ and $b_t = V(S_t)$ for all $t = 3,\ldots,T$.
\black{Then, to obtain an upper bound on $\sum_{t=1}^TR(S_t)$, we observe that} \begin{align}
    T\cdot \frac{B}{1+B} - \sum_{t=1}^TR(S_t) &\geq \sum_{t=1}^T\frac{b_t}{1+b_t} - \sum_{t=1}^T\frac{V(S_t)}{1+V(S_t)}\label{eq:one}\\
    & = \left(\frac{V(S_1)-1}{V(S_1)} - \frac{V(S_1)}{1+V(S_1)}\right) + \left(\frac{V(S_2)+1}{V(S_2)+2} - \frac{V(S_2)}{1+V(S_2)}\right)\label{eq:two}\\
    & = \frac{1}{(1+V(S_2))(2+V(S_2))} - \frac{1}{V(S_1)(1+V(S_1))}\notag\\
    & \geq \frac{1}{B(B+1)} - \frac{1}{(B+1)(B+2)}\label{eq:three}\\
    & = \frac{2}{B(B+1)(B+2)}.\notag
\end{align}
Here, Equation \eqref{eq:one} \black{holds since $\opt({\rm \ref{eq:integerprogram}}) \leq T\cdot \frac{B}{1+B}$, as shown above}. In Equation \eqref{eq:two}, we simply substitute the entries of $\mathbf b$. In Equation \eqref{eq:three}, we use the fact that $V(S_1)\geq B+1$ and $V(S_2) \leq B-1$.
\paragraph{Non-existence of an FPTAS.}
Suppose by contradiction that an FPTAS for \ref{APVC} exists. Letting $\eps = \frac{1}{TB^2(B+2)}$, since this quantity is pseudo-polynomial in the input size, we can obtain a $(1-\eps)$-approximation $(S_1,\ldots, S_T)$ in pseudo-polynomial time.
As explained above, we have $\sum_{t=1}^TR(S_t) > T\cdot \frac{B}{1+B}-\frac{2}{B(B+1)(B+2)}$ if and only if the $3$-\texttt{PARTITION} instance ${\cal A} = \{a_1,\ldots, a_{3T}\}$ is a YES-instance. Since $(S_1,\ldots, S_T)$ is obtained in pseudo-polynomial time, this yields a pseudo-polynomial time to solve the $3$-\texttt{PARTITION} problem, which contradicts its strong NP-hardness.

\subsection{Proof of Lemma \ref{lem:epsobjective}}\label{apx:epsobjective}
    First, since \black{$\nota{v}_i \leq v_i$ for every product $i\in\Nc$}, $\nota{V}(\nota{S}_t)\leq V(S_t)$ for all $t\in [T]$. Consequently, by the monotonicity of $x\mapsto x/(1+x)$, we have $\nota{V}(\nota{S}_t)/(1+\nota{V}(\nota{S}_t))\leq V(S_t)/(1+V(S_t))$. By summing this inequality over all customers $t\in [T]$, we obtain $\widehat{\obj}(\nota{S}_1,\ldots, \nota{S}_T)\leq \obj(S_1,\ldots, S_T)$. Now, let us prove the second inequality, stating that $V(S_t)/(1+V(S_t)) \leq (1+\eps)\cdot \nota{V}(\nota{S}_t)/(1+\nota{V}(\nota{S}_t))$, for     every $t \in [T]$.\begin{itemize}
        \item If there exists a product $i\in S_t$ with $v_i\geq 1/\eps$, then $\nota{v}_i = 1/\eps$, and in turn, $\nota{V}(\nota{S}_t)\geq 1/\eps$. Therefore, \begin{equation*}
            \frac{V(S_t)}{1+V(S_t)} \leq 1 = (1+\eps) \cdot \frac{1/\eps}{1+1/\eps} \leq (1+\eps)\cdot\frac{\nota{V}(\nota{S}_t)}{1+\nota{V}(\nota{S}_t)}
        \end{equation*}
        \item If $v_i< 1/\eps$ for all $i\in S_t$: Note that Equation \eqref{eq:weightbound} trivially holds when $v_i<\eps$, since $\nota{v}_i = v_i$ by definition. Therefore, this equation holds for every $i\in S_t$. Hence, $v_i/(1+v_i) \leq (1+\eps) \cdot \nota{v}_i/(1+\nota{v}_i)$, and our desired inequality follows by summing over $t=1,\ldots,T$.
    \end{itemize}

\subsection{Proof of Lemma \ref{lem:LPopt}}\label{apx:LPopt}
    In this proof, we establish separate inequalities for light, medium, and heavy customers. At the very end, combining these inequalities will lead to the result stated in Lemma \ref{lem:LPopt}.
    
    \paragraph{Heavy customers.} When $t\in [T]$ is a heavy customer, by constraint (V) of LP, we have $\sum_{i\in \Nc} v_ix^*_{it} \geq \frac{1}{\eps}.$
        Therefore, \begin{equation}\label{eq:heavy}
            \frac{\sum_{i\in \Nc} v_ix^*_{it}}{1+\sum_{i\in \Nc} v_ix^*_{it}} \geq \frac{1}{1+\eps} \geq (1-\eps) \cdot \frac{V(S_t^*)}{1+V(S_t^*)}.
        \end{equation}
    
    \paragraph{Medium customers.} In this case, customer $t$ belongs to one of the classes $G_1,\ldots, G_L$, say $G_{\ell}$. As a result,
        \begin{equation*}
            \sum_{i\in \Nc} v_i x^*_{it} \geq V_t = \eps\cdot (1+\eps)^{\ell-1} \geq \frac{1}{1+\eps}\cdot V(S_t^*),
        \end{equation*}
        where the first inequality follows from constraint (V) of LP, and the second inequality holds since $V(S_t^*) \leq \eps \cdot (1+\eps)^{\ell}$ by definition of the class $G_{\ell}$.
        Therefore,\begin{align}
            \frac{\sum_{i\in \Nc} v_ix^*_{it}}{1+\sum_{i\in \Nc} v_ix^*_{it}} &\geq  \frac{1}{1+\eps} \cdot \frac{V(S_t^*)}{1+\frac{1}{1+\eps}\cdot V(S_t^*)}\label{eq:2050}\\
            & \geq \frac{1}{1+\eps} \cdot \frac{V(S_t^*)}{1+V(S_t^*)}\notag\\ 
            &\geq  (1-\eps)\cdot \frac{V(S_t^*)}{1+V(S_t^*)}\label{eq:medium}.
        \end{align}
    \paragraph{Light customers.} Let $\mathbf{\hat X}$ be the vector given by $\hat X_{it} = \mathbbm 1\{i \in S_t^*\}$, for every $t\in [T]$ and $i\in \Nc$. According to our construction, it is easy to verify that $\mathbf{\hat X}$ is a feasible solution to \ref{eq:ILP}, and therefore, also to its linear relaxation LP. Therefore, since $\mathbf{x^*}$ is an optimal solution to the latter program, we have \begin{equation}\label{eq:noinspi}
            \sum_{t\in G_{\rm light}} \sum_{i\in \Nc} v_ix^*_{it} \geq \sum_{t\in G_{\rm light}} \sum_{i\in \Nc} v_i\hat X_{it} =  \sum_{t\in G_{\rm light}} V(S_t^*).
        \end{equation}
        Also, by constraint (VI) of LP, we have $\sum_{i\in \Nc} v_i x^*_{it}\leq \eps$ for all $t\in G_{\rm light}$. Therefore, \begin{align}
            \sum_{t\in G_{\rm light}}\frac{\sum_{i\in \Nc} v_ix^*_{it}}{1+\sum_{i\in \Nc} v_ix^*_{it}}&\geq \frac{1}{1+\eps}\cdot \sum_{t\in G_{\rm light}}\sum_{i\in \Nc} v_ix^*_{it}\notag\\
            &\geq (1-\eps)\cdot\sum_{t\in G_{\rm light}} V(S_t^*)\notag\\
            & \geq (1-\eps)\cdot\sum_{t\in G_{\rm light}}\frac{V(S_t^*)}{1+V(S_t^*)},\label{eq:light}
        \end{align}
    where the second inequality follows from Equation \eqref{eq:noinspi}.

\subsection{Proof of Lemma \ref{lem:nearoptimal}}\label{apx:nearoptimal}
    \noindent {\em {Case 1}: Customer $t$ is unbounded.} Let $q_t \in \{1,\ldots,Q\}$ be a class of products for which $k_{q_t,t} = \star$; such a class indeed exists by definition of unbounded customers. Then, \black{\begin{align}
        \sum_{i\in D_{q_t}} v_i x_{it}^* &\geq \eps^5\cdot \sum_{i\in D_{q_t}} x_{it}^* \label{eq:8} \\ &\geq \eps^5\cdot \frac{1}{\eps^6} \label{eq:9}\\ & = \frac{1}{\eps}.\label{eq:number1}
    \end{align}
Inequality~\eqref{eq:8} holds since $v_i \geq \eps^5$ for every product $i\in D_{q_t}$, as  $q_t\neq 0$.} \black{Equation~\eqref{eq:9}} follows from constraint (IV) of LP.
Therefore, 
    \begin{align}
        \P\left[V(S_t) \leq  \frac{1-\eps}{\eps}\right] &= 
        \P\left[\sum_{i\in \Nc}v_i X_{it} \leq  \frac{1-\eps}{\eps}\right] \notag\\& \leq \P\left[\sum_{i\in \Nc}v_i X_{it} \leq  (1-\eps)\cdot \sum_{i\in D_{q_t}} v_i x_{it}^*\right]\label{eq:ineq1}\\
        & \leq \P\left[\sum_{i\in D_{q_t}}v_i X_{it} \leq  (1-\eps)\cdot \sum_{i\in D_{q_t}} v_i x_{it}^*\right]\notag\\
        & = \P\left[\sum_{i\in D_{q_t}}X_{it} \leq  (1-\eps)\cdot \sum_{i\in D_{q_t}}x_{it}^*\right]\label{eq:number2}\\
        & \leq \exp\left(-\frac{\eps^2\cdot \sum_{i\in D_{q_t}}x_{it}^*}{2}\right)\label{eq:ineq2}\\
        & \leq \exp\left(-\frac{1}{2\eps^4}\right)\label{eq:ineq3}\\
        &\leq \eps.\label{eq:unboundbound}
    \end{align}
    Here, Inequality \eqref{eq:ineq1} follows from Equation \eqref{eq:number1}. Equality \eqref{eq:number2} holds since all products within the same class ($D_{q_t}$) have the same preference weight. Inequality \eqref{eq:ineq2} is a direct application of Lemma \ref{lem:chernoff}. In Inequality \eqref{eq:ineq3}, we plug in $\sum_{i\in D_{q_t}} x_{it}^*> 1/\eps^6$, since $k_{q_t, t} = \star$, as established by constraint (IV) of LP.
    Therefore, \begin{align}
        \E\left[\frac{V(S_t)}{1+V(S_t)}\right] &\geq \P\left[V(S_t)> \frac{1-\eps}{\eps}\right] \cdot  \E\left[\frac{V(S_t)}{1+V(S_t)} \,\left|\,V(S_t)> \frac{1-\eps}{\eps}\right.\right]\notag\\
        &\geq (1-\eps)\cdot \frac{\frac{1-\eps}{\eps}}{1+\frac{1-\eps}{\eps}} \label{eq:10} \\ &=(1-\eps)^2 \\ &\geq (1-2\eps)\cdot \frac{\sum_{i\in \Nc}v_ix_{it}^*}{1+\sum_{i\in \Nc}v_ix_{it}^*},\notag
    \end{align}
    where \black{Equation~\eqref{eq:10}} follows from Equation \eqref{eq:unboundbound} and the monotonicity of $x\mapsto x/(1+x)$ on $[0,\infty)$.

    \noindent{\em {Case 2}: Customer $t$ is bounded and heavy, or customer $t$ is medium.} To address this case, let us recall that the random preference weight of the assortment $S_t$ offered to customer $t$ is\begin{equation*}
    V(S_t)=\sum_{i\in \Nc}v_iX_{it} = \underbrace{\sum_{i\in D_0}v_iX_{it}}_{W_t^{\sma}} + \underbrace{\sum_{q = 1}^{Q-1}\sum_{i\in   D_q}v_iX_{it}}_{W_t^{\lar}}.
\end{equation*}
    \black{We introduce the random variables $W_t^{\sma}$ and $W_t^{\lar}$, which represent the contributions to $V(S_t)$ from products in $D_0$ and in $\Nc\setminus D_0$, respectively, defined as: $
    W_t^{\sma} = \sum_{i\in D_0}v_iX_{it}$ and $W_t^{\lar} = \sum_{q = 1}^{Q-1}\sum_{i\in   D_q}v_iX_{it}.
    $}
    Similarly, let $w_t^{\sma} = \sum_{i\in D_0}v_ix^*_{it}$, and $w_t^{\lar} = \sum_{i\in \Nc\setminus D_0}v_ix^*_{it}$, be the expected values of these  random variables. \black{In the next claim, we show that the random variable $W_t^{\lar}$ is almost surely equal to its expected value. Its proof, whose deferred to Appendix~\ref{apx:nonlightcust}, is a direct application of the degree preservation property}.
\begin{claim}\label{cl:nonlightcust}
    With probability $1$, we have $W_t^{\lar} = w_t^\lar.$
\end{claim}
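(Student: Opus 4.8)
The plan is to exploit the way the bipartite graph is built for bounded customers: there, the fractional degree of each product-class vertex is an integer fixed by the guessing step, so dependent rounding leaves it untouched, which pins down $W_t^{\lar}$ deterministically. First I would verify that, under the hypotheses of Case~2, customer $t$ is indeed bounded, i.e.\ $k_{q,t}\neq \star$ for every $q\in\{1,\ldots,Q\}$. This is immediate when $t$ is bounded and heavy. For a medium customer it follows from mediumness: each class $D_q$ with $q\geq 1$ consists of products of weight at least $\eps^5$, so $k_{q,t}=\star$ would mean strictly more than $1/\eps^6$ such products, forcing $V(S_t^*)>\eps^5\cdot \eps^{-6}=1/\eps$ and contradicting $V(S_t^*)<1/\eps$.

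Next I would read off the relevant degrees in the bipartite graph. For a bounded customer $t$, the neighbors of vertex $(t,q)$ are exactly the products of class $D_q$, and the edge $(i,(t,q))$ carries weight $x_{it}^*$; hence the fractional degree of $(t,q)$ equals $\sum_{i\in D_q} x_{it}^* = k_{q,t}$ by Constraint~(III) of~\eqref{eq:ILP}, which is a nonnegative integer. Applying the degree-preservation property~\ref{it:P2} to the vertex $(t,q)$ then yields $\sum_{i\in D_q} X_{it}\in\{\lfloor k_{q,t}\rfloor,\lceil k_{q,t}\rceil\}=\{k_{q,t}\}$, that is, $\sum_{i\in D_q} X_{it}=k_{q,t}$ almost surely, for every $q\in\{1,\ldots,Q\}$.

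Finally I would combine this with the fact that all products of a single class $D_q$ ($q\geq 1$) share a common preference weight $\nu_q$ (equal to $\eps^5(1+\eps)^{q-1}$ for $1\leq q\leq Q-1$ and to $1/\eps$ for $q=Q$). Pulling $\nu_q$ out of each inner sum gives
$$W_t^{\lar}=\sum_{q\geq 1}\nu_q\sum_{i\in D_q} X_{it}=\sum_{q\geq 1}\nu_q\,k_{q,t}\qquad\text{almost surely,}$$
a deterministic quantity. Identifying this constant with its mean---either by taking expectations or, class by class, via the marginals~\ref{it:P1}, $\E[\sum_{i\in D_q} X_{it}]=\sum_{i\in D_q} x_{it}^*=k_{q,t}$---shows it equals $w_t^{\lar}=\sum_{i\in\Nc\setminus D_0} v_i x_{it}^*$, so $W_t^{\lar}=w_t^{\lar}$ almost surely. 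I do not expect a genuine obstacle: the entire argument is a one-line consequence of~\ref{it:P2} once one notices that the graph was deliberately designed so that every class-vertex $(t,q)$ of a bounded customer has integral fractional degree. The only care needed is in justifying that Case~2 customers are bounded (so Constraint~(III) applies for all $q\geq 1$) and in using the constancy of weights within each $D_q$ to pass from rounded degrees to weighted sums.
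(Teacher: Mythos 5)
Your proposal is correct and follows essentially the same route as the paper's proof: pull out the common weight of each class $D_q$, observe that constraint (III) makes the fractional degree of each vertex $(t,q)$ an integer, and invoke the degree-preservation property \ref{it:P2} to conclude that $\sum_{i\in D_q}X_{it}=\sum_{i\in D_q}x^*_{it}$ almost surely. You are in fact slightly more careful than the paper on two minor points --- explicitly verifying that medium customers must be bounded (the paper dispatches this in one sentence) and noting that class $D_Q$ carries weight $1/\eps$ rather than $\eps^5(1+\eps)^{Q-1}$ --- but the underlying argument is identical.
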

\begin{itemize}
    \item When $w_t^{\sma}\leq \eps \cdot\sum_{i\in \Nc} v_ix_{it}^*$, then intuitively, the contribution of the products in $D_0$ represents at most an $\eps$-fraction of the total contribution of all products to $\sum_{i\in \Nc}v_ix_{it}^*$, and can therefore be neglected. Formally, \black{with} probability $1$, we have
    \black{\begin{align}
        V(S_t)&\geq W_t^{\lar}\label{eq:11} \\  &= w_t^{\lar} \notag\\
         &=  \sum_{i\in \Nc}v_i x_{it}^* - w_t^\sma \notag \\ \label{eq:12}&\geq (1-\eps)\cdot \sum_{i\in \Nc}v_ix_{it}^*,
    \end{align}}
    where \black{Equation~\eqref{eq:11}} is a consequence of Claim \ref{cl:nonlightcust}, and \black{Equation~\eqref{eq:12}} follows from the case hypothesis.
    Therefore, by the monotonicity of $x\mapsto x/(1+x)$, we have\begin{equation}
        \frac{V(S_t)}{1+V(S_t)} \geq (1-\eps)\cdot \frac{\sum_{i\in \Nc}v_i x_{it}^*}{1+\sum_{i\in \Nc}v_i x_{it}^*}.
    \end{equation}
    \item When $w_t^{\sma}> \eps \cdot\sum_{i\in \Nc} v_ix_{it}^*$, we have
    \begin{align}
        \P\left[W_t^\sma \leq (1-\eps)\cdot w_t^\sma\right] & = \P\left[\sum_{i\in D_0}\frac{v_i}{\eps^5}X_{it} \leq (1-\eps)\cdot \sum_{i\in D_0}\frac{v_i}{\eps^5}x_{it}^*\right]\notag\\
        &\leq \exp\left(-\frac{ \sum_{i\in D_0}v_ix_{it}^*}{2\eps^3}\right)\label{eq:ineq4}\\
        &\leq \exp\left(-\frac{\sum_{i\in \Nc}v_ix_{it}^*}{2\eps^2}\right)\label{eq:ineq5}\\
        &\leq \exp\left(-\frac{1}{2\eps}\right)\label{eq:ineq6}\\
        &\leq \eps.\label{eq:boundbound}
    \end{align}
    Here, Inequality \eqref{eq:ineq4} follows from Lemma \ref{lem:chernoff}, since $v_i< \eps^5$ for all $i\in D_0$. Inequality \eqref{eq:ineq5} is a consequence of the case hypothesis. Inequality \eqref{eq:ineq6} follows from the fact that customer $t$ is not light, and hence $\sum_{i\in \Nc}v_ix_{it}^* \geq \eps$.
    Therefore, \begin{align}
        \E\left[\frac{V(S_t)}{1+V(S_t)}\right]
        &\geq \P\left[W_t^{\sma}> (1-\eps)\cdot w_t^{\sma}\right] \cdot  \E\left[ \left.\frac{V(S_t)}{1+V(S_t)}\,\right|\,W_t^{\sma}> (1-\eps)\cdot w_t^{\sma}\right]\notag\\
        &= \P\left[W_t^{\sma}> (1-\eps)\cdot w_t^{\sma}\right] \cdot  \E\left[ \left.\frac{w_t^\lar+ W_t^\sma}{1+w_t^\lar+ W_t^\sma}\,\right|\,W_t^{\sma}> (1-\eps)\cdot w_t^{\sma}\right]\label{eq:13}\\
        &\geq (1-\eps) \cdot \frac{w_t^\lar+ (1-\eps)\cdot w_t^s}{1+w_t^\lar+(1-\eps)\cdot w_t^s}\label{eq:14}\\
        &\geq (1-\eps)^2.\frac{\sum_{i\in \Nc}v_ix^*_{it}}{1+\sum_{i\in \Nc}v_ix^*_{it}}\notag\\
        & \geq (1-2\eps).\frac{\sum_{i\in \Nc}v_ix^*_{it}}{1+\sum_{i\in \Nc}v_ix^*_{it}}.\notag
    \end{align}
    In \black{Equation~\eqref{eq:13}}, we apply Claim \ref{cl:nonlightcust}. In \black{Equation~\eqref{eq:14}}, we use Equation \eqref{eq:boundbound}, as well as the monotonicity of $x\mapsto x/(1+x)$.
\end{itemize}

\noindent {\em {Case 3}: Customer $t$ is light.} \black{The following claim is simply a restatement of Lemma~\ref{lem:nearoptimal} when customer $t$ is light. Due to the complex nature of its proof, we defer it to Appendix~\ref{apx:lightcust}}.
\begin{claim}\label{cl:lightcusts}
    $
        \E\left[\frac{V(S_t)}{1+V(S_t)}\right] \geq (1-2\eps)\cdot \frac{\sum_{i\in \Nc}v_ix_{it}^*}{1+\sum_{i\in \Nc}v_ix_{it}^*}.
    $
\end{claim}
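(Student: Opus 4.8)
The plan is to exploit the defining feature of a light customer, namely that constraint (VI) of LP forces $\mu_t \coloneqq \sum_{i\in\Nc} v_i x_{it}^*$ to be at most $\eps$. On this small range the function $x\mapsto x/(1+x)$ is essentially linear, so instead of the multiplicative Chernoff bound used in Cases 1 and 2 --- which breaks down here precisely because $\mu_t$ may be far smaller than $\eps$, rendering the exponent in Lemma \ref{lem:chernoff} useless --- I would switch to a second-moment argument.

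First I would record two preliminary facts. (i) A light customer is necessarily bounded: if $k_{q,t}=\star$ for some $q\ge 1$, then constraint (IV) gives $\sum_{i\in D_q}x_{it}^*>1/\eps^6$, and since every product in $D_q$ has weight at least $\eps^5$, this forces $\sum_{i\in\Nc}v_ix_{it}^*\ge 1/\eps$, contradicting constraint (VI). Consequently, by the same reasoning as in Claim \ref{cl:nonlightcust} --- the degree-preservation property \ref{it:P2} together with the fact that all products within a class $D_q$ ($q\ge 1$) share a common weight, so that the count times the common weight is a fixed integer multiple --- the contribution $\sum_{q\ge 1}\sum_{i\in D_q}v_iX_{it}$ is almost surely equal to its mean. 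Hence only the class-$D_0$ contribution $W_t^{\sma}=\sum_{i\in D_0}v_iX_{it}$ is random. (ii) By the marginal property \ref{it:P1}, $\E[V(S_t)]=\mu_t\le\eps$.

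Next I would apply the pointwise inequality $x/(1+x)\ge x-x^2$, valid for all $x\ge 0$ since the difference equals $x^3/(1+x)\ge 0$, to obtain
$$\E\!\left[\frac{V(S_t)}{1+V(S_t)}\right]\ge \E[V(S_t)]-\E[V(S_t)^2]=\mu_t-\mu_t^2-\Var(V(S_t)).$$
The heart of the argument is then to control $\Var(V(S_t))=\Var(W_t^{\sma})$, where the equality uses fact (i). Here the negative-correlation property \ref{it:P3} yields $\Var(W_t^{\sma})\le \sum_{i\in D_0}v_i^2 x_{it}^*$, and since $v_i<\eps^5$ on $D_0$ this is at most $\eps^5\sum_{i\in D_0}v_ix_{it}^*\le \eps^5\mu_t$. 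Plugging this in gives $\E[V(S_t)/(1+V(S_t))]\ge \mu_t(1-\mu_t-\eps^5)\ge(1-2\eps)\mu_t$, using $\mu_t\le\eps$ and $\eps^5\le\eps$; since $\mu_t\ge \mu_t/(1+\mu_t)$, the right-hand side is at least $(1-2\eps)\cdot\sum_{i\in\Nc}v_ix_{it}^*/(1+\sum_{i\in\Nc}v_ix_{it}^*)$, which is the desired bound.

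The one delicate point --- and the reason light customers require a dedicated case --- is the variance estimate. It is essential to isolate the class $D_0$, whose items are genuinely light ($v_i<\eps^5$), from the heavier classes: a blanket bound $\Var(V(S_t))\le\sum_{i\in\Nc}v_i^2x_{it}^*\le\max_i v_i\cdot\mu_t$ would lose a factor $1/\eps$ and be worthless. Isolating $D_0$ is exactly what the boundedness of light customers (fact (i)) buys us, as it makes the heavy-class contribution deterministic and thus variance-free.
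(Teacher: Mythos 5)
Your proof is correct, and it reaches the claim by a genuinely more elementary route than the paper at the key analytic step, while sharing the same structural skeleton. Both arguments rest on the same three ingredients: (a) the decomposition $V(S_t)=W_t^{\sma}+w_t^{\lar}$ with the non-$D_0$ part deterministic --- your fact (i), showing via constraints (IV) and (VI) that a light customer must be bounded, is a welcome explicit justification of a step the paper dispatches with ``it is not difficult to verify that the proof of Claim~\ref{cl:nonlightcust} also holds for light customers''; (b) the variance bound $\Var(W_t^{\sma})\leq \eps^5\mu_t$ obtained from the negative-correlation property~\ref{it:P3} (pairwise covariances are nonpositive) together with $v_i<\eps^5$ on $D_0$; and (c) constraint (VI), i.e., $\mu_t\leq\eps$. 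Where you diverge is in how variance control is converted into a bound on $\E[V(S_t)/(1+V(S_t))]$: you use the pointwise inequality $x/(1+x)\geq x-x^2$ and a direct second-moment computation, whereas the paper applies Jensen's inequality to the convex map $w\mapsto 1/(1+w+w_t^{\lar})$ with distribution-dependent weights $z_w$, which yields the exact identity
\begin{equation*}
\frac{1}{\alpha}=1+w_t^{\lar}+\E[W_t^{\sma}]+\frac{\Var[W_t^{\sma}]}{w_t^{\lar}+\E[W_t^{\sma}]}
\end{equation*}
for the multiplicative loss $\alpha$, before invoking the same variance bound. Your version avoids the finite-support bookkeeping and the weighted-Jensen manipulation, at the cost of an additive loss $\mu_t^2+\Var(V(S_t))\leq(\eps+\eps^5)\mu_t$ rather than the paper's multiplicative one; since $\mu_t\leq\eps$, both losses are of the same order and both land at the $(1-2\eps)$ guarantee, with your final step $\mu_t\geq\mu_t/(1+\mu_t)$ correctly bridging back to the stated form. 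Your closing remark about why isolating $D_0$ is essential (a blanket bound $\max_i v_i\cdot\mu_t$ would lose a factor $1/\eps$) also correctly identifies the role that boundedness of light customers plays in both proofs.
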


Combining cases 1, 2 and 3, it follows that\begin{equation*}
    \E\left[\sum_{t =1}^T\frac{V(S_t)}{1+V(S_t)}\right] \geq (1-2\eps)\cdot \sum_{t=1}^T \frac{\sum_{i\in \Nc}v_ix_{it}^*}{1+\sum_{i\in \Nc}v_ix_{it}^*}.\end{equation*}

\subsection{Proof of Claim \ref{cl:nonlightcust}}\label{apx:nonlightcust}
For the purpose of showing that $W^{\lar}_t = w_t^{\lar}$, note that \begin{align*}
    W_t^\lar& = \sum_{q = 1}^Q\sum_{i\in D_q}v_iX_{it}\\
    & = \sum_{q = 1}^Q \eps^5\cdot(1+\eps)^{q-1}\sum_{i\in D_q}X_{it}\\
    & = \sum_{q = 1}^Q \eps^5\cdot(1+\eps)^{q-1}\sum_{i\in D_q}x^*_{it}\\
    & = \sum_{q = 1}^Q\sum_{i\in D_q}v_ix^*_{it}\\
    & = w_t^\lar,
\end{align*}
where the second equality follows from the definition of $D_q$ as the set of products with weight $\eps\cdot (1+\eps)^{q-1}$, and the third inequality follows from the degree preservation property \ref{it:P2}, and the fact that $\sum_{i\in D_q}x^*_{it}$ is an integer, as stipulated by constraint (III) of \ref{eq:ILP}. Note that $k_{q,t}\neq \star$ for all $q =1,\ldots, Q$, as otherwise, customer $t$ would be heavy and unbounded, which contradicts the case hypothesis.

\subsection{Proof of Claim \ref{cl:lightcusts}}\label{apx:lightcust}
First, it is not difficult to verify that the proof of Claim \ref{cl:nonlightcust} also holds for light customers. Therefore, for every customer $t\in G_{\rm light}$, we can decompose $V(S_t)$ into the sum of : (1) The random variable $W_t^\sma$, and (2) The deterministic random variable $W_t^{\lar} \overset{a.s}{=}w_t^\lar$, i.e.,
$
    V(S_t) = W_t^\sma + w_t^\lar.    
$
Our proof consists on two steps. In the first step, we lower bound the expectation of $V(S_t)/(1+V(S_t))$, mainly using a convexity argument. Specifically, we prove that there exists a constant $\alpha\in (0,1]$ such that $
    \E\left[\frac{V(S_t)}{1+V(S_t)}\right] \geq \alpha\cdot \sum_{i\in \Nc}v_ix^*_{it}.
$
In the second step, we show that $\alpha = 1-O(\eps)$.
\paragraph{Step 1:}
Let $\cal W$ be the support of the random variable $W_t^\sma $, i.e., $
    {{\cal W} = \left\{w\in\R\,\colon\, \P\left[W_t^\sma= w\right]>0\right\}}.
$
Note that the support ${\cal W}$ is finite since the random variable $W_t^\sma$ is a deterministic function of the random binary vector $(X_{it}\,\colon\,i\in D_0)$, whose support is trivially finite.
We have
\begin{align*}
    \E\left[\frac{V(S_t)}{1+V(S_t)}\right] & = \E\left[\frac{W_t^\sma + w_t^\lar}{1+W_t^\sma + w_t^\lar}\right]\\
    & = \sum_{w \in {\cal W}} \frac{w+w_t^\lar}{1+w+ w_t^\lar}\P\left[W_t^\sma=w\right]\\
    & = \left(\sum_{w\in {\cal W}} \frac{1}{1+w + w_t^\lar}\cdot \frac{(w+ w_t^\lar)\cdot \P\left[W_t^\sma=w\right]}{w_t^\lar+\E\left[W_t^\sma\right]}\right)\cdot \left(w_t^\lar +\E\left[W_t^\sma\right]\right).
\end{align*}
For each $w\in {\cal W}$, let $f(w) = \frac{1}{1+w + w_t^\lar}$ and let $z_w = \frac{(w+ w_t^\lar)\cdot \P\left[W_t^\sma=w\right]}{ w_t^\lar+\E\left[W_t^\sma\right]}$. It is easy to verify that $\sum_{w\in {\cal W}} z_w=1$, and that $f$ is a convex function on $(0, +\infty)$. Therefore, using Jensen's inequality, we have\begin{align*}
    \E\left[\frac{V(S_t)}{1+V(S_t)}\right] &\geq f\left(\sum_{w\in {\cal W}} z_w \cdot w\right)\cdot \left(\E[W_t^\sma]+ w_t^\lar\right)= \underbrace{f\left(\sum_{w\in {\cal W}} z_w \cdot w\right)}_{\coloneqq \alpha}\cdot \E\left[V(S_t)\right],
\end{align*}
where the last equality holds since $\E[W_t^\sma]+ w_t^\lar = \E[V(S_t)]$.
\paragraph{Step 2.} Let us now show that $\alpha = 1-O(\eps)$. To this end, note that
\begin{align}
    \frac1\alpha& = {1+w_t^\lar + \frac{\sum_{w\in {\cal W}}(w+ w_t^\lar)\cdot \P[W_t^\sma=w]\cdot w}{w_t^\lar+\E[W_t^\sma]}} \notag\\
    & = {1+w_t^\lar+\frac{w_t^\lar\cdot\E[W_t^\sma] + \E[(W_t^\sma)^2]}{w_t^\lar+\E[W_t^\sma]}}\notag\\
    & = {1+ w_t^\lar+\E[W_t^\sma] + \frac{\Var[W_t^\sma]}{w_t^\lar+\E[W_t^\sma]}}.\label{eq:last}
\end{align}
In order to show that $\alpha = 1-O(\eps)$, we proceed by bounding the denominator in Equation \eqref{eq:last}. For this purpose, we have\begin{align}
    \Var[W_t^\sma] & = \Var\left[\sum_{i\in D_0} v_iX_{it}\right]\notag\\
    & = \sum_{i\in D_0}v_i^2\cdot \Var\left[X_{it}\right]+ \sum_{\substack{i,j\in D_0\\ i\neq j}} v_i\cdot v_j\cdot \Cov[X_{it},X_{jt}] \label{eq:420}\\
        & \leq \sum_{i\in D_0} v_i^2\cdot \Var[X_{it}]\notag\\
        &\leq \eps^5\cdot \sum_{i\in D_0} v_i\cdot \E[X_{it}] \label{eq:421}\\
        &= \eps^5\cdot \E[W_t^\sma]\notag.
\end{align}
In Equation \eqref{eq:421}, we use the fact that for any product $i\in D_0$, $\Var[X_{it}]\leq \E[X_{it}]$ since $X_{it}$ is a Bernoulli random variable, and the fact that $v_i \leq \eps^5$.
Equation \eqref{eq:420} holds since $\Cov[X_{it}, X_{jt}]\leq 0$ for all $i,j\in D_0$, $i\neq j$. Indeed, \begin{align*}
    \Cov[X_{it}, X_{jt}] &=\E[X_{it}\cdot X_{jt}] - \E[X_{it}]\cdot \E[X_{jt}]\\
    & = \P[(X_{it} = 1)\wedge (X_{it} = 1)]-\P[X_{it} = 1]\cdot \P[X_{it} = 1]\\
    &\leq 0,
\end{align*}
where the inequality holds due to the negative correlation property \ref{it:P3}.
Therefore, by replacing in Equation \eqref{eq:last}, we have\begin{align*}
    \alpha &\geq \frac{1}{1+w_t^\lar+\E[W_t^\sma]+\frac{\eps^5\cdot \E[W_t^\sma]}{w_t^\lar+\E[W_t^\sma]}} \\
    & \geq \frac{1}{1+\eps+\eps^5} \\
    &\geq \frac{1}{1+2\eps}\\
    &\geq 1-2\eps
\end{align*}
In the second inequality, we use the fact that $w_t^\lar+\E[W_t^\sma] = \E[V(S_t)] = \sum_{i\in \Nc}v_ix_{it}^* \leq \eps$, by constraint (VI) of LP, as customer $t$ is light by the case hypothesis.

\section{Proof of Claim~\ref{cl:increasefee}} \label{apx:increasefee}
Recall that an optimal solution to \ref{APV} is given by $S_t^* = \overline{L_t\cup \cdots \cup L_T}$ for every customer $t\in [T]$, as shown in Theorem \ref{Solution structure}. \black{Using Equations \eqref{eq:theells} and \eqref{eq:sol} to characterize the optimal solution of this updated instance, we define $\Tilde L_t = \{i\in\Nc:\Tilde \ell_i = t\}$, for all $t\in \{0\}\cup[T]$ and $\Tilde S_t = \llbracket\bigcup_{t\leq u\leq T}\Tilde L_u\rrbracket$ for all $t \in [T]$. In particular, 
    $$
    \Tilde L_t := \left\{\begin{array}{ll}
         L_t &  \quad\text{if }t\notin \{\ell_i, \ell_i+1\} \\
         L_t\setminus \{i\} & \quad\text{if } t=\ell_i\\
         L_t\cup\{i\} & \quad\text{if } t=\ell_i+1
    \end{array}\right.
    $$
    Therefore, we have $\Tilde S_t = S_t^*$ for all $t\neq \ell_i+1$, and $\Tilde S_{\ell_i+1} = \overline{L_t\cup\cdots\cup L_{\ell_i+1}\cup\{i\}}$.
}
Let $\Tilde \Delta$ be the total revenue loss incurred in the modified instance, that is $\Tilde \Delta = T\cdot R(S^*) - \sum_{t=1}^T R(\Tilde S_t)$. Additionally, for any product $j\in \Nc$, let $\Tilde C_{j}$ be its contribution after increasing $\ell_i$, i.e., $\Tilde C_{j} = \sum_{t=1}^T \mathbbm{1}(j \in \Tilde S_t) (p_j - R(\Tilde S_t)) \cdot v_j$. Finally, let $\Tilde \Gamma_j = (\Tilde C_{j}^- / \sum_{m\neq j}\Tilde C_{m}^-)\cdot  \Tilde \Delta$, namely the fee imposed on vendor $j$ in the modified instance. Using these notations, let us show that $\Tilde \Gamma_i\geq \Gamma_i$.
 First, if $i\in S_{\ell_i+1}^*$, then $S_{\ell_i+1}^* = \Tilde S_{\ell_i+1}$ and we have $\Tilde C_{i} = \cont{i}$. Consequently, $\Tilde \Gamma_i  = \Gamma_i$. Otherwise, $\cont{i}\geq\Tilde C_{i}$, and therefore $C_{i}^-\leq\Tilde C_{i}^-$. Furthermore, since $R(S^*_{\ell_i+1}) \geq R(\Tilde S_{\ell_i+1})$, we have $\Tilde C_{j} \geq C_{j}$ for every $j\neq i$. Finally, we have\begin{align}
    \Tilde \Gamma_{i} &= \frac{\Tilde C_i^-}{\Tilde C_{i}^- + \sum_{j\neq i}\Tilde C_{j}^-} \cdot \Tilde \Delta\notag\\
    & \geq \frac{\Tilde C_i^-}{\Tilde C_{i}^- + \sum_{j\neq i}C_{j}^-}\cdot  \Delta\notag\\
    &\geq \frac{C_i^-}{{C_{i}^-} + \sum_{j\neq i}{C_{j}^-}}\cdot  \Delta\label{eq:2049}\\
    & = \Gamma_i,\notag
\end{align}
where Inequality \eqref{eq:2049} follows from the fact that $x\mapsto x/{(c+x)}$ is nondecreasing on $[0,+\infty)$, for any $c>0$.

\end{document}